\definecolor{linkblue}{RGB}{1,1,190}
\definecolor{citegreen}{RGB}{1,190,1}
\theoremstyle{definition}
\newtheorem{theorem}{Theorem}[section]
\newtheorem{definition}[theorem]{Definition}
\newtheorem{lemma}[theorem]{Lemma}
\newtheorem*{lemma*}{Lemma}
\newtheorem{proposition}[theorem]{Proposition}
\newtheorem{corollary}[theorem]{Corollary}
\newtheorem{remark}[theorem]{Remark}
\newcommand{\thistheoremname}{}
\newtheorem*{genericthm}{\thistheoremname}
\newcommand{\Q}{\mathbb{Q}}
\newcommand{\Z}{\mathbb{Z}}
\newcommand{\N}{\mathbb{N}}
\newcommand{\roi}{\mathcal{O}}
\DeclareMathOperator{\nrd}{nrd}
\DeclareMathOperator{\api}{\boldsymbol{\uppi}}
\DeclareMathOperator{\tensor}{\otimes}
\newcommand{\fp}{\mathfrak p}
\newcommand*\bigcdot{\mathpalette\bigcdot@{.5}}
\newcommand*\bigcdot@[2]{\mathbin{\vcenter{\hbox{\scalebox{#2}{$\m@th#1\bullet$}}}}}
\newcommand\blfootnote[1]{%
  \begingroup
  \renewcommand\thefootnote{}\footnote{#1}%
  \addtocounter{footnote}{-1}%
  \endgroup
}
\tikzset{
    invisible/.style={opacity=0},
    labl/.style={anchor=south, rotate=90, inner sep=.5mm},
    symbol/.style={
    draw=none,
    every to/.append style={
      edge node={node [sloped, allow upside down, auto=false]{$#1$}}}
  }
}
\title{Elasticities of Orders in Central Simple Algebras}
\author{Casper Barendrecht}
\date{}
\begin{document}
\maketitle
\blfootnote{Date: \today}
\blfootnote{Email: \href{mailto:casper.barendrecht@uni-graz.at}{casper.barendrecht@uni-graz.at}}
\blfootnote{University of Graz, Institute for Mathematics and Scientific Computing, NAWI Graz, Heinrichstrasse 36, 8010 Graz, Austria}

\begin{abstract}
  Let $\roi$ be an order in a central simple algebra $A$ over a number field.
  The elasticitity $\rho(\roi)$ is the supremum of all fractions $k/l$ such that there exists an non-zero-divisor $a \in \roi$ that has factorizations into atoms (irreducible elements) of length $k$ and $l$.
  We characterize the finiteness of the elasticity for Hermite orders $\roi$, if either $\roi$ is a quaternion order, or $\roi$ is an order in an central simple algebra of larger dimension and $\roi_\fp$ is a tiled order at every finite place $\fp$ at which $A_\fp$ is not a division ring.
  We also prove a transfer result for such orders.
  This extends previous results for hereditary orders to a non-hereditary setting.
\end{abstract}

\section{Introduction}

If $\roi$ is a noetherian ring, then every non-zero-divisor $a \in \roi^\bullet$ can be written as a product of atoms (irreducible elements) of the monoid $\roi^\bullet$ of non-zero-divisors of $R$.
Usually such a factorization is not unique, and arithmetical invariants are used to quantify the non-uniqueness.
The \emph{elasticity} of an element $a \in \roi^\bullet$, denoted by $\rho(a) \in \mathbb Q_{\ge 1} \cup \{ \infty \}$, is the supremum of all $k/l$ such that $a$ has a factorization of length $k$ and a factorization of length $l$.
The \emph{elasticity} of $\roi^\bullet$ is then $\rho(\roi^\bullet) = \sup\{\, \rho(a) : a \in \roi^\bullet \,\}$.
The monoid $\roi^\bullet$ is \emph{half-factorial} if $\rho(\roi^\bullet) = 1$, that is, if the factorization length of every element is unique.
Elasticities are one of the most classical arithmetical invariants.

In factorization theory, one aims to study the various phenomena of the non-uniqueness of factorizations of elements using suitable arithmetical invariants, and in particular, to understand the interaction between arithmetical invariants and the underlying algebraic structure of the ring (see \cite{Anderson,Chapman,Fontana-Houston-Lucas,Geroldinger09,geroldinger16,smertnig16,armf,geroldinger-zhong20,mitft} for recent surveys and conference proceedings).
Factorization theory originated from algebraic number theory and has grown to be a subfield in the intersection of algebraic, analytic, and combinatorial number theory.
Whereas the interest has initially been focused on commutative settings, in particular, on Krull domains, Krull monoids, and Mori monoids, the past decade has seen an increasing interest in studying the factorizations in noncommutative rings and monoids.
While elasticities are some of the most basic invariants, they remain to be of central interest \cite{autry-oneill-ponomarenko20,gotti20,gotti-oneill20}.

The natural noncommutative analogue to orders in number fields are orders in central simple algebras over number fields.
Let $K$ be a number field and $R_S$ a ring of $S$-integers.
Let $A$ be a central simple algebra over $K$.
A subring $R_S \subseteq \roi \subseteq A$ is an \emph{$R_S$-order} in $A$ if it is finitely generated as $R_S$-module and contains a $K$-basis of $A$.
Factorizations in maximal $R_S$-orders, and more generally, hereditary $R_S$-orders have been studied before \cite{estes1991,smertnig2013,smertnig19}. 
Of particular interest are factorizations in quaternion orders and the associated metacommutation phenomenon \cite{MR3314061,MR3544510,MR3805465,baeth2017,chari2020,charibabei}.
Here we study \emph{non-hereditary} orders, and focus on their elasticities.

A \emph{transfer homomorphism} $\theta\colon R^\bullet \to D$ (see Definition~\ref{def:transferhom} below) preserves many arithmetical invariants.
In particular, $\rho(R^\bullet) = \rho(D)$.
If $\roi$ is a hereditary $R_S$-order that is moreover Hermite (see Definition~\ref{def:hermite} below), then there exists a transfer homomorphism with $D$ a monoid of zero-sum sequences over a (finite, abelian) ray class group of $K$.
Using adelic methods, our first main result, Theorem~\ref{prop:transfer}, shows that in the non-hereditary setting there still exists a transfer homomorphism, if one allows $D$ to be a $T$-block monoid (see Definition~\ref{def:Tblockmonoid}).
The $T$-block monoid $D$ is defined in terms of a ray class group of $K$ and the non-hereditary completions of $\roi$.
In the hereditary case, one recovers the known results.

We briefly discuss the crucial Hermite condition.
If $\dim_K A > 4$ or $A$ is an indefinite quaternion algebra, then $\roi$ is Hermite by strong approximation.
Thus the only non-Hermite orders appear in definite quaternion algebras.
Restrict now to the case where $R=R_S$ is the ring of algebraic integers of the center of $K$.
Then there exists a complete classification of the definite Hermite quaternion orders \cite{smertnigvoight2019}.
If $\roi$ is a maximal $R$-order that is not Hermite, it is known that there cannot be a transfer homomorphism to a monoid of zero-sum sequences \cite[Theorem 1.2]{smertnig2013}.
Thus, the Hermite condition is a natural one for a transfer result to hold.

Let $\roi_\fp$ denote the completion of $\roi$ at the prime $\fp$ of $R_S$.
If $A$ is a quaternion algebra, that is $\dim_K A = n^2 = 4$, then $\rho(\roi_\fp^\bullet)$ is finite if and only if $A_\fp$ is a division ring or $\roi_\fp$ is hereditary \cite{baeth2017}.
In case $n > 2$, we show that this continues to hold for tiled orders $\roi_\fp$ in Corollary~\ref{cor:tiledlocalelas}.
In our second main result, Theorem~\ref{thm:elasequivalence}, we characterize Hermite $R_S$-orders $\roi$ when $n=2$ or $\roi$ is tiled at every place that is not totally ramified.

\textbf{Acknowledgements.} The author acknowledges the support of the Austrian Science Fund (FWF): W1230.

I would like to thank my PhD supervisor Daniel Smertnig, for introducing me to this intriguing branch of mathematics and for his support and guidance over the past year.

\section{Preliminaries}

\subsection{Transfer Homomorphisms}
Throughout this paper a \textit{monoid} refers to a semigroup with identity, that is a triple \((T,\cdot,1)\) where \(T\) is a set, \(\cdot:T\times T\to T\) an associative function, and \(1\in T\) a neutral element with respect to \(\cdot\).
Unless otherwise stated, any monoid considered here is written multiplicatively.
An element \(a\in T\) is said to be \textit{cancellative} if, for all \(b,c\in T\) we have \(ab=ac\) implies \(b=c\), and \(ba=ca\) implies \(b=c\).
The set of cancellative elements is a submonoid of \(T\) and is denoted by \(T^\bullet\).
An element \(a\in T\) is invertible if there exists a \(b\in T\) such that \(ab=ba=1\).
The group of invertible elements is a submonoid of \(T^\bullet\) and is denoted by \(T^\times\).

Common examples of monoids studied in factorization theory include the monoid of cancellative elements of the multiplicative monoid of a ring \(R\), and \(R^\bullet\) will refer to this monoid.
Let \(\mathcal{P}\) be a set.
The \textit{free abelian monoid} \((\mathcal{F}(\mathcal{P}),\bigcdot)\) over \(\mathcal{P}\) is the set of all formal sequences \(S=g_1\bigcdot g_2\bigcdot ... \bigcdot g_{n_S}\) of elements of \(\mathcal{P}\), subject to the condition that \(g\bigcdot h=h\bigcdot g\), for all \(h,g\in \mathcal{P}\).
This set becomes a monoid under concatenation.
The number of elements in a sequence \(S\) is called the \textit{length} of \(S\) and we write \( |S|\) for this integer.
Any \(S\in \mathcal{F}(\mathcal{P})\) may be written as a formal product \(S=\prod_{g\in \mathcal{P}}^\bullet g^{n_g}\) with \(n_g\in \Z_{\geq 0}\), and \(n_g=0\) for all but finitely many \(g\).
Let \((G,\cdot)\) be a finite abelian group written multiplicatively.
Then the free abelian monoid \(\mathcal{F}(G)\) comes equipped with a natural homomorphism \(\sigma:
\mathcal{F}(G)\to G\) defined by \( \prod_{g\in G}^\bullet g^{n_g}\mapsto \prod_{g\in G}g^{n_g}\).
The preimage of the trivial element of \(G\) under this map is the \textit{monoid of zero-sum sequences over \(G\)} and is denoted by \(\mathcal{B}(G)\).
A sequence \(S=\prod_{g\in G}^\bullet g^{n_g}\in \mathcal{F}(G)\) is \textit{zero-sumfree} if, and only if, \(\sigma(S')\neq 1\) for all non-empty subsequences \(S'\) of \(S\).
Since \(G\) is a finite, there are only finitely many zero-sumfree sequences.
We define the \textit{Davenport constant} of \(G\) as \[\mathcal{D}(G)=1+\max \{|S|\mid S\in \mathcal{F}(G)\text{ is zero-sumfree}\}.\]
Equivalently, \(\mathcal{D}(G)\) may be defined as the maximum of the lengths of atoms of \(\mathcal{B}(G)\).
We refer to \cite{geroldinger2006} and \cite{smertnig16} for more background on this topic.
An immediate generalization of the monoid of zero-sum sequences is the notion of a \(T\)-block monoid as seen in \cite{geroldinger2006}.

\begin{definition}\label{def:Tblockmonoid}
Let \(T\) be a cancellative monoid and let \(\iota: T\to G\) be a monoid homorphism to a finite abelian group \((G,\cdot)\).
Let \(\sigma:\mathcal{F}(G)\to G\) be the natural homomorphism. 
The \textit{\(T\)-Block monoid over \(G\) induced by \(\iota\)} is the monoid
\[\mathcal{B}_T(G,\iota)=\{(S,\alpha)\in \mathcal{F}(G)\times T\mid \sigma(S)\cdot\iota(\alpha)=1\}.\]
\end{definition}
It is the pullback of \(\mathcal{F}(G)\) and \(T\) along \(\sigma\) and \(\iota':T\to G,\alpha\mapsto\iota(\alpha)^{-1}\), that is
\[\begin{tikzcd}
 \mathcal{B}_T(G,\iota)\arrow[r]\arrow[d] & T\arrow[d,"\iota' " ']\\
\mathcal{F}(G)\arrow[r,"\sigma"] & G.
\end{tikzcd}\]
If \(T=\{1\}\) is the trivial monoid, then \(\mathcal{B}_T(G,\iota)\) is simply the monoid of zero-sum sequences \(\mathcal{B}(G)\).

Let \(T\) be a cancellative monoid.
An element \(a\in T\) is an \textit{atom} if for every \(b,c\in T\) such that \(a=bc\), we have \(b\in T^\times\) or \(c\in T^\times\).
The set of atoms of \(T\) is denoted by \(\mathcal{A}(T)\).
An element \(a\in T\) is said to have a \textit{factorization} if there exist atoms \(u_1,...,u_k\in \mathcal{A}(T)\), and an \(\epsilon\in T^\times\) such that \(a=\epsilon u_1u_2\cdots u_k\) for some \(k\in \Z_{\geq 0}\).
The integer \(k\) is called the \textit{length} of the factorization \(\epsilon u_1u_2\cdots u_k\), and
the monoid \(T\) is \textit{atomic} if every \(a\in T\) admits a factorization.
A factorization of an element need not be unique in general, and in fact, two factorizations of the same element may have different lengths.
We recall some important definitions as seen in \cite{smertnig16}.

\begin{definition}
Let \(T\) be a cancellative monoid.

\begin{enumerate}
	\item The \textit{set of lengths} \(L(a)\) of an element \(a\in T\) is the set of all \(k\in \Z\) such that \(a\) has a factorization of length \(k\).
	\item The \textit{system of set of lengths of} \(T\) is the set \(\mathcal{L}(T)=\{L(a)\mid a\in T\}\).
\end{enumerate}

If \(T\) is atomic then:
\begin{enumerate}[resume]
\item The \textit{elasticity \(\rho(a)\)} of an element \(a\in T\backslash T^\times\) is given by
\[\rho(a)=\frac{\sup L(a)}{\min L(a)}\in \Q_{\geq 1}\cup\{\infty\},\]
and \(\rho(a)=1\) if \(a\in T^\times\).
\item The \textit{elasticity of \(T\)} is given by \(\rho(T)=\sup\{\rho(a)\mid a\in T\}\).
\item The \textit{refined elasticities of \(T\)} are given by 
\[\rho_k(T)=\sup\{\sup L(a)\mid a\in T\text{ with } \min L(a)\leq k\}\]
for all \(k\in \Z_{\geq 2}\).
\item For any \(k\in \Z_{\geq 2}\), the \textit{union of sets of lengths containing} \(k\) is given by
\[\mathcal{U}_k(T)=\{m\in\Z\mid\exists a\in T,\text{ with } \{k,m\}\subset L(a)\}.\]
\end{enumerate} 
\end{definition}

An immediate consequence of these definitions is the following.

\begin{lemma}\label{lem:elaslimit}
Let \(T\) be an atomic monoid.
The refined elasticities form an increasing sequence and \[\rho(T)=\limsup_{k\to\infty} \frac{\rho_k(T)}{k}.\]
In particular if \(\rho_k(T)=\infty\) for some \(k\in \Z_{\geq 2}\), then \(\rho(T)=\infty\).
\end{lemma}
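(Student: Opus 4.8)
The plan is to establish monotonicity separately and then prove the two inequalities $\limsup_{k\to\infty}\rho_k(T)/k\le\rho(T)$ and $\rho(T)\le\limsup_{k\to\infty}\rho_k(T)/k$. Monotonicity is immediate from the definition: the family $\{a\in T:\min L(a)\le k\}$ is contained in $\{a\in T:\min L(a)\le k+1\}$, so the supremum defining $\rho_{k+1}(T)$ is taken over a larger set and $\rho_k(T)\le\rho_{k+1}(T)$. I would also note at the outset that the identity is only meaningful once $T$ has an atom; if $T=T^\times$ then $L(a)=\{0\}$ for all $a$, so every $\rho_k(T)=0$ while $\rho(T)=1$, and both sides degenerate. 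Hence I assume $T\ne T^\times$, which is the only case relevant to $\roi^\bullet$, and in particular $\mathcal{A}(T)\ne\emptyset$ since $T$ is atomic.

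For the first inequality I would simply bound $\rho_k(T)$ from above by $k\,\rho(T)$. For any non-unit $a$ with $\min L(a)\le k$, rearranging the definition of $\rho(a)$ gives $\sup L(a)=\rho(a)\min L(a)\le\rho(T)\cdot k$, while units contribute $\sup L(a)=0$. Taking the supremum over all admissible $a$ yields $\rho_k(T)\le k\,\rho(T)$, hence $\rho_k(T)/k\le\rho(T)$ for every $k$, and passing to the $\limsup$ gives $\le\rho(T)$. This direction is routine and holds verbatim when $\rho(T)=\infty$.

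The substance, and the main obstacle, lies in the reverse inequality: I must realize the ratio $\rho(a)$ of a fixed element not as a single quotient $\rho_m(T)/m$ (which would only produce a $\sup$, not a $\limsup$) but along an infinite family of indices tending to infinity. The device is to pass to powers. Fix a non-unit $a$ and set $m=\min L(a)$. Using the standard inclusion $L(a)+\dots+L(a)\subseteq L(a^n)$ with $n$ summands, I obtain $\min L(a^n)\le nm$ and $\sup L(a^n)\ge n\sup L(a)$. The first bound makes $a^n$ admissible in the definition of $\rho_{nm}(T)$, and combined with the second it gives
\[
\frac{\rho_{nm}(T)}{nm}\ \ge\ \frac{\sup L(a^n)}{nm}\ \ge\ \frac{n\sup L(a)}{nm}\ =\ \rho(a).
\]
Since the indices $nm$ run through an infinite sequence tending to $\infty$ along which the quotient stays $\ge\rho(a)$, I conclude $\limsup_{k\to\infty}\rho_k(T)/k\ge\rho(a)$. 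Taking the supremum over all non-units $a$ (and noting that applying this to a fixed atom $u$ gives $\limsup\ge\rho(u)=1$, which accounts for the units, whose elasticity is $1$) yields $\limsup_{k\to\infty}\rho_k(T)/k\ge\rho(T)$. Combining the two inequalities proves the formula. As an aside, the same superadditivity estimate $\rho_{k+l}(T)\ge\rho_k(T)+\rho_l(T)$ shows via Fekete's lemma that the $\limsup$ is in fact a genuine limit equal to $\sup_k\rho_k(T)/k$, though this is not needed.

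Finally, the \enquote{in particular} clause follows at once from the formula together with monotonicity: if $\rho_k(T)=\infty$ for some $k\ge2$, then $\rho_j(T)=\infty$ for all $j\ge k$, so $\rho_j(T)/j=\infty$ along a sequence tending to infinity, whence $\limsup_{j\to\infty}\rho_j(T)/j=\infty$ and therefore $\rho(T)=\infty$.
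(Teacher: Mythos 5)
Your proof is correct. There is nothing in the paper to compare it against: the lemma is introduced there with the words ``an immediate consequence of these definitions'' and no proof is given, so your write-up supplies precisely the details being taken for granted. The two halves of your argument are the expected ones. The bound $\rho_k(T)\le k\,\rho(T)$ is routine, and the real content --- getting a $\limsup$ rather than a mere $\sup$ --- is handled by the standard device of passing to powers $a^n$ and using $L(a)+\dots+L(a)\subseteq L(a^n)$; this is essentially the argument for the analogous statement in the commutative setting in \cite{geroldinger2006}. Note that this sumset inclusion does survive noncommutativity, exactly as you need: given factorizations $a=\epsilon u_1\cdots u_k$ and $b=\eta v_1\cdots v_l$, the unit $\eta$ is absorbed into the adjacent atom ($u_k\eta$ is again an atom), so $ab$ has a factorization of length $k+l$. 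Your flag on the degenerate case is also a genuine, if minor, catch: when $T=T^\times$ one has $\rho(T)=1$ but $\rho_k(T)=0$ for all $k$, so the formula as literally stated fails; the paper glosses over this, and it is harmless for the intended application since $\roi^\bullet\ne\roi^\times$ there. Your aside that superadditivity plus Fekete upgrades the $\limsup$ to a limit is likewise correct and consistent with the known sharper statement.
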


The refined elasticities of an atomic monoid \(T\) are fully determined by its system of sets of lengths.
Thus if a homomorphism \(\varphi:T\to D\) between atomic monoids preserves the system of sets of lengths, then their elasticities coincide.
This leads to the definition of a transfer homomorphism.

\begin{definition}[{\cite[Definition 5.12]{smertnig16}}]\label{def:transferhom}
Let \(T,D\) be two atomic monoids.
A monoid homomorphism \(\varphi:T\to D\) is a \textit{transfer homomorphism} if the following conditions hold:
\begin{enumerate}
\item \(\varphi^{-1}(D^\times)=T^\times\), and \(D=D^\times \varphi(T) D^\times\),
\item If \(\varphi(a)=b_1b_2\), then there exist \(a_1,a_2\in T\), and an \(\epsilon\in D^\times\), such that \(a=a_1a_2\), \(\varphi(a_1)=b_1\epsilon\), and \(\varphi(a_2)=\epsilon^{-1}b_2\).
\end{enumerate}
\end{definition}

\begin{proposition}[{\cite[Theorem 5.15]{smertnig16}}]\label{theorem:transferhom}
Let \(T,D\) be atomic monoids, and let \(\varphi:T\to D\) be a transfer homomorphism.
Then \(L_T(a)=L_D(\varphi(a))\) for all \(a\in T\).
In particular \(\mathcal{L}(T)=\mathcal{L}(D)\).
\end{proposition}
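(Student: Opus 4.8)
The plan is to establish the equality of sets of lengths by proving the two inclusions $L_T(a) \subseteq L_D(\varphi(a))$ and $L_D(\varphi(a)) \subseteq L_T(a)$ separately, and then to upgrade $L_T(a) = L_D(\varphi(a))$ to $\mathcal{L}(T) = \mathcal{L}(D)$ using that every element of $D$ is an associate of one in the image. The backbone of both inclusions consists of two auxiliary observations, which I would record first. The first is that $\varphi$ carries atoms to atoms: if $u \in \mathcal{A}(T)$, then $u \notin T^\times$ forces $\varphi(u) \notin D^\times$ by condition (1), and any splitting $\varphi(u) = b_1 b_2$ lifts via condition (2) to $u = a_1 a_2$ with $\varphi(a_1) = b_1 \epsilon$ and $\varphi(a_2) = \epsilon^{-1} b_2$; atomicity of $u$ places one factor, say $a_1$, in $T^\times$, whence $b_1 = \varphi(a_1)\epsilon^{-1} \in D^\times$. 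The second, dual observation is that $\varphi$ reflects atoms: if $\varphi(a_1) \in \mathcal{A}(D)$, then $a_1 \notin T^\times$ (again by condition (1)), and any splitting $a_1 = xy$ maps to $\varphi(a_1) = \varphi(x)\varphi(y)$, forcing $\varphi(x) \in D^\times$ or $\varphi(y) \in D^\times$, hence $x \in T^\times$ or $y \in T^\times$ by condition (1). Both facts are quick consequences of the two defining properties.

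For the inclusion $L_T(a) \subseteq L_D(\varphi(a))$ I would simply push a factorization forward. Writing $a = \epsilon u_1 \cdots u_k$ with $\epsilon \in T^\times$ and $u_i \in \mathcal{A}(T)$ and applying $\varphi$ gives $\varphi(a) = \varphi(\epsilon)\varphi(u_1)\cdots\varphi(u_k)$, where $\varphi(\epsilon) \in D^\times$ and each $\varphi(u_i) \in \mathcal{A}(D)$ by the first observation; this is a factorization of length $k$ in $D$, so $k \in L_D(\varphi(a))$.

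The more substantial inclusion $L_D(\varphi(a)) \subseteq L_T(a)$ requires lifting a factorization, which I would carry out by induction on its length $k$ through repeated use of condition (2). For $k = 0$ one has $\varphi(a) \in D^\times$, so $a \in \varphi^{-1}(D^\times) = T^\times$ and $0 \in L_T(a)$. For $k \geq 1$, write $\varphi(a) = \eta c_1 \cdots c_k$ with $\eta \in D^\times$ and $c_i \in \mathcal{A}(D)$, group it as $\varphi(a) = (\eta c_1)(c_2 \cdots c_k)$, and apply condition (2) to obtain $a = a_1 a_2$ with $\varphi(a_1) = \eta c_1 \epsilon$ and $\varphi(a_2) = \epsilon^{-1} c_2 \cdots c_k$ for some $\epsilon \in D^\times$. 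Since $\eta c_1 \epsilon$ is a unit times an atom times a unit, hence an atom, the second observation makes $a_1$ an atom of $T$, while $\varphi(a_2)$ is a product of $k-1$ atoms, so the induction hypothesis supplies a factorization of $a_2$ of length $k-1$; concatenating yields a factorization of $a$ of length $k$, that is $k \in L_T(a)$.

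Finally, to pass from $L_T(a) = L_D(\varphi(a))$ to $\mathcal{L}(T) = \mathcal{L}(D)$, I would use $D = D^\times \varphi(T) D^\times$ from condition (1): every $d \in D$ equals $\eta \varphi(a) \eta'$ for some units $\eta, \eta'$ and some $a \in T$, and since multiplying by units alters neither atomicity nor factorization length, $L_D(d) = L_D(\varphi(a)) = L_T(a)$; this delivers both inclusions of systems of sets of lengths simultaneously. I expect the main obstacle to be the careful bookkeeping of the correction units $\epsilon$ and $\eta$ in the inductive lifting step, where one must verify at each stage that absorbing them into adjacent atoms or units preserves both atomicity and length --- which is precisely what the two preliminary observations are designed to guarantee.
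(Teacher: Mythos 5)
Your proof is correct. Note that the paper does not prove this proposition at all --- it is imported from the literature with the citation [smertnig16, Theorem 5.15] --- and your argument (atoms are mapped to atoms and reflected back via conditions (1) and (2), factorizations are pushed forward directly and lifted by induction on length using condition (2), and $\mathcal{L}(T)=\mathcal{L}(D)$ follows from $D = D^\times\varphi(T)D^\times$ since multiplying by units does not change sets of lengths) is precisely the standard proof given in that reference.
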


A well-known application of a transfer homomorphism is \cite[Theorem 3.7.1]{geroldinger2006} which, given a maximal order \(R\) in a number field, provides a transfer homomorphism \(\varphi:R^\bullet\to \mathcal{B}(\text{Cl}(R))\) from the cancellative elements of \(R\) to the monoid of zero-sum sequences over its class group.
Let \(I\) be a set and let \((T_i)_{i\in I}\) be a family of cancellative monoids.
The \textit{restricted product} of \((T_i)_{i\in I}\) is the monoid
\[\widehat{T}=\sideset{}{'}\prod_{i\in I} T_{i}= \left\{(\alpha_{i})_{i}\in \prod_{i\in I}T_i~\middle| ~\alpha_{i}\in T_{i}^\times\text{ for all but finitely many }i \right\}.\]
  
\begin{lemma}\label{lem:restricttransfer}
Let \(I\) be a set.
Let \((T_i)_{i\in I}, (D_i)_{i\in I}\) be two families of atomic monoids, and let \((\varphi_i)_{i\in I}\) be a family of transfer homomorphisms \(\varphi_i:T_i\to D_i\)
The following statements hold.
\begin{enumerate}
\item The monoid \(\widehat{T}\) is atomic.
\item The map \(\widehat{\varphi}:\widehat{T}\to \widehat{D},(\alpha_i)_{i\in I}\mapsto (\varphi_i(\alpha_i))_{i\in I}\) is a transfer homomorphism.
\end{enumerate}
\end{lemma}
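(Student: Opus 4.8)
The plan is to reduce everything to the coordinatewise behaviour, so that the only real work is checking that the tuples produced by the componentwise constructions again satisfy the restricted-product constraint. Two elementary facts about a cancellative monoid \(M\) will be used throughout. First, a one-sided inverse is automatically two-sided: if \(xy=1\) then \(x(yx)=(xy)x=x=x\cdot1\), so \(yx=1\) by cancellation, whence \(xy=1\) forces \(x,y\in M^\times\). Second, any factor of a unit is a unit: if \(xy=u\in M^\times\) then \(x\cdot yu^{-1}=1\), so \(x\in M^\times\) and then \(y\in M^\times\). Applied coordinatewise these show that \(\widehat{T}\) is cancellative and that \(\widehat{T}^\times=\prod_{i\in I}T_i^\times\), the full direct product, since a tuple of units trivially meets the restricted-product condition. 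I also record that two elements of \(\widehat{T}\) with disjoint support commute, where the support of a tuple is the set of indices at which it differs from \(1\), because then at each index at most one factor is non-trivial.

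For part (1) I would first determine the atoms of \(\widehat{T}\). If a non-unit \(a=(\alpha_i)_i\) had non-units in two distinct coordinates \(i_1\neq i_2\), splitting off the \(i_1\)-coordinate exhibits a factorization into two non-units; hence every atom has a single non-unit coordinate \(\alpha_{i_0}\), and projecting a factorization shows \(\alpha_{i_0}\in\mathcal{A}(T_{i_0})\). Conversely such a tuple is an atom of \(\widehat{T}\): in any factorization the \(i_0\)-coordinate forces one factor to be a unit there, while at every other index the two coordinates multiply to a unit and are therefore units by the facts above. To factor an arbitrary \(a=(\alpha_i)_i\), note that its support \(J=\{\,i:\alpha_i\notin T_i^\times\,\}\) is finite; writing each \(\alpha_i=\epsilon_i u_{i,1}\cdots u_{i,k_i}\) with \(u_{i,j}\in\mathcal{A}(T_i)\) and \(\epsilon_i\in T_i^\times\), I let \(U_{i,j}\in\widehat{T}\) be the atom supported at \(i\) with entry \(u_{i,j}\), and collect the units into one \(E\in\widehat{T}^\times\). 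Since the blocks for distinct \(i\) commute, \(a=E\prod_{i\in J}U_{i,1}\cdots U_{i,k_i}\) is a finite factorization into atoms, so \(\widehat{T}\) is atomic; the same argument applied to \((D_i)_{i\in I}\) shows \(\widehat{D}\) is atomic, as required for \(\widehat{\varphi}\) to be a candidate transfer homomorphism.

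For part (2), \(\widehat{\varphi}\) is well-defined and a homomorphism: each \(\varphi_i\), being a homomorphism, sends \(T_i^\times\) into \(D_i^\times\), so \(\alpha_i\in T_i^\times\) for almost all \(i\) gives \(\varphi_i(\alpha_i)\in D_i^\times\) for almost all \(i\). The first condition of Definition~\ref{def:transferhom} for \(\widehat{\varphi}\) is then immediate coordinatewise: \(\widehat{\varphi}(a)\in\widehat{D}^\times=\prod_i D_i^\times\) iff each \(\varphi_i(\alpha_i)\in D_i^\times\) iff each \(\alpha_i\in T_i^\times\). To realize a given \((\delta_i)_i\in\widehat{D}\) in \(\widehat{D}^\times\widehat{\varphi}(\widehat{T})\widehat{D}^\times\), I use \(D_i=D_i^\times\varphi_i(T_i)D_i^\times\) at each index, choosing at the almost-all indices with \(\delta_i\in D_i^\times\) the representation with middle factor \(1\); then the middle tuple lies in \(\widehat{T}\) and the outer tuples in \(\widehat{D}^\times\). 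For the second condition, given \(\widehat{\varphi}(a)=b_1b_2\) I apply it for each \(\varphi_i\) to \(\varphi_i(\alpha_i)=\beta^{(1)}_i\beta^{(2)}_i\), obtaining \(\alpha_i=\alpha^{(1)}_i\alpha^{(2)}_i\) and \(\epsilon_i\in D_i^\times\) with the required coordinate equations, and assemble \(a_1=(\alpha^{(1)}_i)_i\), \(a_2=(\alpha^{(2)}_i)_i\), \(\epsilon=(\epsilon_i)_i\). Then \(\epsilon\in\widehat{D}^\times\) automatically, and \(a_1,a_2\in\widehat{T}\) because at each of the almost-all indices with \(\alpha_i\in T_i^\times\) both factors \(\alpha^{(1)}_i,\alpha^{(2)}_i\) are units, being factors of a unit; the three global equations hold since they hold coordinatewise.

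The single point I would check most carefully—and the only genuine subtlety—is that every tuple produced above really satisfies the restricted-product constraint. Nothing forces the componentwise choices delivered by the two transfer conditions to be units at almost all indices a priori; this is guaranteed only because the input already lies in a restricted product and because, in a cancellative monoid, factors of a unit are units while homomorphisms carry units to units. Once these observations are in place the argument is purely coordinatewise and requires no further interaction between the indices.
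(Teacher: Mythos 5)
Your proof is correct, and it is organized differently from the paper's. The paper proves the lemma by a two-step reduction: since all but finitely many coordinates of an element (and hence, because factors of units are units, of any of its factors) are units, one may assume \(I\) is finite, and then by induction that \(I=\{1,2\}\), where atomicity and the lifting property are checked via the splitting \((\alpha_1,\alpha_2)=(\alpha_1,1)\cdot(1,\alpha_2)\). You instead work directly on the infinite restricted product: you characterize \(\mathcal{A}(\widehat{T})\) as the tuples with exactly one non-unit coordinate, which is an atom of the corresponding \(T_i\), build factorizations from the finite support, and verify both conditions of Definition~\ref{def:transferhom} coordinatewise. The underlying mechanism is identical — everything rests on the facts that one-sided inverses are two-sided and factors of units are units in a cancellative monoid, which is exactly what makes the restricted-product constraint propagate to the factors, the subtlety you correctly single out at the end. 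What your version buys is completeness: you explicitly verify condition (1) of Definition~\ref{def:transferhom}, both \(\widehat{\varphi}^{-1}(\widehat{D}^\times)=\widehat{T}^\times\) and \(\widehat{D}=\widehat{D}^\times\widehat{\varphi}(\widehat{T})\widehat{D}^\times\), which the paper's proof leaves implicit (it only checks the factorization-lifting condition (2)), and your description of the atoms of \(\widehat{T}\) is a reusable byproduct; the paper's reduction-plus-induction is shorter in exchange. One further small point in your favor: your bookkeeping correctly places the units \(\epsilon_i\) in \(D_i^\times\), whereas the paper's proof writes \(\epsilon_i\in T_i^\times\), which is a typo.
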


\begin{proof} Observe that \(\widehat{T}^\times =\prod_{i\in I}T_i^\times\).
Let \(\widehat{\alpha}=(\alpha_i)_{i\in I}\in \widehat{T}\), and assume that \(\widehat{\alpha}=\widehat{\beta}\cdot \widehat{\gamma}\) for certain \(\widehat{\beta},\widehat{\gamma}\in \widehat{T}\).
By definition of the restricted product, there exists a finite set \(J\subset I\) such that \(\alpha_i\in T_i^\times\) for all \(i\notin J\).
Consequently we have \(\beta_i,\gamma_i\in T_i^\times\) for all \(i\notin J\), and it follows that \(\widehat{\alpha}\) has a factorization in \(\widehat{T}\) if and only if \((\alpha_i)_{i\in J}\) has a factorization in \(\prod_{i\in J}T_i\).
Hence to prove the atomicity of \(\widehat{T}\) we may assume \(I\) to be finite.

Similarly as \(\varphi_i\) is a transfer homomorphism for all \(i\in I\), it follows that \(\widehat{\varphi}(\widehat{\alpha})_i=\varphi_i(\alpha_i)\) is invertible if \(i\notin J\).
By the same argument as before, in order to prove that \(\widehat{\varphi}\) is a transfer homomorphism, we may assume \(I\) to be finite.
By induction we may moreover assume that \(I=\{1,2\}\).

An element \(\alpha=(\alpha_1,\alpha_2)\in T_1\times T_2\) can be written as a product \((\alpha_1,1)\cdot (1,\alpha_2)\).
Hence any factorizations of \(\alpha_1\) and \(\alpha_2\) lift to a factorization of \(\alpha\), proving the atomicity of \(\widehat{T}\).
Furthermore assume that \(\widehat{\varphi}(\alpha)=bc=(b_1c_1,b_2c_2)=(b_1c_1,1)\cdot (1,b_2c_2)\).
Since \(\varphi_i\) is a transfer homomorphism for \(i=\{1,2\}\), there exist \(a_i, d_i\in T_i\), and \(\epsilon_i\in T_i^\times\), such that \(\varphi_i(a_i)=b_i\epsilon_i\), and \(\alpha_i=a_id_i\).
Hence \(\alpha=(a_1,a_2)(d_1,d_2)\) and \(\widehat{\varphi}(a_1,a_2)=b\cdot(\epsilon_1,\epsilon_2)\).
We conclude that \(\widehat{\varphi}\) is a transfer homomorphism.
\end{proof}

\subsection{Orders in Central Simple Algebras}
Central simple algebras have been studied in great detail over the past century.
We recall some common results and definitions within this field, and we refer to \cite{reinermaximal} for a detailed background of this theory.

Let \(R\) be a Dedekind domain with field of fractions \(K\).
Let \(A/K\) be a finite dimensional central simple algebra.
The dimension \(\dim_K(A)=n^2\) of \(A\) as a \(K\)-vector space is always a square, and the integer \(n\) is known as the \textit{degree} of \(A/K\).
By the Wedderburn-Artin theorem \(A\) is isomorphic to a matrix ring over a  division ring with center \(K\).
Write \(A=M_m(D)\) where \(D\) is the central division ring of dimension, and write \(d^2\) for the dimension of \(D\).
By \cite[Theorem 7.15]{reinermaximal}, the division ring \(D\) contains a degree \(d\) field extension \(L/K\), such that there exists an isomorphism of \(L\)-algebras \(A\tensor_K L\cong M_n(L)\).
The field \(L\) is a \textit{splitting field} of \(A\) and comes equipped with an embedding \(\mu:A\to M_n(L)\).
For any \(\alpha\in A\) we define the \textit{reduced characteristic polynomial of} \(\alpha\) as the characteristic polynomial of \(\mu(\alpha)\).
We write \(f_\alpha(X)\) for this polynomial, and note that it is a monic polynomial of degree \(n\).
By \cite[Theorem 9.3]{reinermaximal} the polynomial \(f_\alpha(X)\) lies inside \(K[X]\) for all \(\alpha\in A\), and is independent of the field \(L\) and the embedding \(\mu\).
As the embedding \(\mu\) is injective it further follows from the Cayley-Hamilton theorem that \(f_\alpha(\alpha)=0\) for all \(\alpha\in A\).
We define the \textit{reduced norm map} of \(A\) as 
\[\nrd:A\to K, \alpha\mapsto \det(\mu(\alpha)).\]
The reduced norm map is multiplicative and does not depend on \(L\) or \(\mu\).
Writing \(f_\alpha(X)=\sum_{i=0}^n a_i X^i\) with \(a_n=1\), we find that \(\nrd (\alpha)=(-1)^na_0\).
An \(R\)-\textit{order} of \(A\) is a subring \(\roi\subset A\) that is finitely generated as an \(R\)-module and satisfies \(K\roi=A\).

\begin{lemma}\label{lem:nrdatomicity}
Let \(\roi\subset A\) be an \(R\)-order, and let \(\alpha\in \roi\). The following statements hold:
\begin{enumerate}
\item \(\nrd \alpha\in R\),
\item \(\alpha\in \roi^\bullet\) if and only if \(\nrd \alpha\neq 0\),
\item \(\alpha\in \roi^\times\) if and only if \(\nrd \alpha\in R^\times\),
\item \(\roi^\bullet\) is atomic.
\end{enumerate}
\end{lemma}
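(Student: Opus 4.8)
The plan is to extract everything from three facts already available: the multiplicativity of $\nrd$, the Cayley--Hamilton identity $f_\alpha(\alpha)=0$, and the formula $\nrd(\alpha)=(-1)^n a_0$, where I write $f_\alpha(X)=X^n+a_{n-1}X^{n-1}+\cdots+a_1X+a_0\in K[X]$. For (1), I would first note that every $\alpha\in\roi$ is integral over $R$, since it lies in the ring $\roi$, which is finitely generated as an $R$-module; hence its minimal polynomial $m_\alpha\in K[X]$ is monic, and because $R$ is integrally closed in $K$ (being Dedekind), a standard Gauss-type argument places $m_\alpha\in R[X]$. The eigenvalues of the matrix $\mu(\alpha)$ are roots of $m_\alpha$, hence integral over $R$, so the coefficients of $f_\alpha$, being their elementary symmetric functions, are integral over $R$ as well; since they also lie in $K$, they lie in $R$. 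In particular $\nrd(\alpha)=(-1)^n a_0\in R$. (Alternatively one may simply cite Reiner for $f_\alpha\in R[X]$.)

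The key observation driving (2) and (3) is that $\nrd(\alpha)\neq 0$ if and only if $\alpha\in A^\times$. Indeed, if $\nrd(\alpha)\neq 0$ then $a_0\neq 0$, and rearranging $f_\alpha(\alpha)=0$ gives $\alpha\cdot(-a_0^{-1}(\alpha^{n-1}+\cdots+a_1))=1$, exhibiting a two-sided inverse in $A$; conversely $\nrd(\alpha)\nrd(\alpha^{-1})=\nrd(1)=1$ forces $\nrd(\alpha)\neq 0$. Since invertible elements are cancellative, this settles the ``if'' direction of (2). For the ``only if'' direction I argue contrapositively: if $\nrd(\alpha)=0$ then $\alpha\notin A^\times$, so left multiplication by $\alpha$ on the finite-dimensional algebra $A$ is not injective, giving a nonzero $x\in A$ with $\alpha x=0$; clearing denominators via $A=K\roi$ yields a nonzero $\beta\in\roi$ with $\alpha\beta=0$, so $\alpha\notin\roi^\bullet$. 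For (3), one direction combines multiplicativity with (1): a unit $\alpha\in\roi^\times$ has $\nrd(\alpha)\nrd(\alpha^{-1})=1$ with both factors in $R$. For the converse, if $\nrd(\alpha)\in R^\times$ then $a_0=(-1)^n\nrd(\alpha)\in R^\times$, and the inverse $-a_0^{-1}(\alpha^{n-1}+\cdots+a_1)$ computed above visibly lies in $\roi$, since $\alpha\in\roi$ and $R\subseteq\roi$.

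Finally, for (4) I would equip $\roi^\bullet$ with the length function $\Omega(\alpha)$ equal to the number of prime ideals of $R$ dividing $\nrd(\alpha)R$, counted with multiplicity; this is finite because $R$ is Dedekind and additive because $\nrd$ is multiplicative. By (3), $\Omega(\alpha)=0$ exactly when $\alpha\in\roi^\times$, and by (2) any factor in $\roi$ of an element of $\roi^\bullet$ again lies in $\roi^\bullet$. Hence whenever $\alpha=\beta\gamma$ with $\beta,\gamma$ non-units, both $\Omega(\beta)$ and $\Omega(\gamma)$ are strictly smaller than $\Omega(\alpha)$, and an induction on $\Omega(\alpha)$ produces a factorization of $\alpha$ into atoms; in fact it shows $\roi^\bullet$ is a bounded factorization monoid, every factorization having length at most $\Omega(\alpha)$.

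I expect the only genuinely substantive point to be (1), namely confirming that the reduced characteristic polynomial has coefficients in $R$ via integrality and the integral closedness of $R$. Once $\nrd$ is known to take values in $R$ and the Cayley--Hamilton identity is in hand, parts (2)--(4) reduce to bookkeeping, the one point requiring care being the choice in (4) of a length function that is both additive and strictly decreasing under proper factorization.
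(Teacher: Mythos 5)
Your proof is correct, and it is worth recording where it coincides with and where it diverges from the paper's own argument. Parts (1) and (3) are essentially identical: the paper simply cites Reiner's Theorem 10.1 for \(f_\alpha\in R[X]\) (your integrality argument is in substance the proof of that theorem), and the unit criterion is handled in both proofs by the same Cayley--Hamilton formula for the inverse together with multiplicativity of \(\nrd\). In part (2) the paper stays inside \(\roi\): from \(\nrd(\alpha)=(-1)^{n-1}\sum_{i=1}^{n}a_i\alpha^{i}\) it observes that \(\alpha\) right-divides the central scalar \(\nrd\alpha\), so \(\alpha b=\alpha c\) forces \(\nrd(\alpha)b=\nrd(\alpha)c\) and cancellativity is inherited from \(R\); you instead pass to the ambient algebra and prove \(\nrd\alpha\neq 0\) if and only if \(\alpha\in A^\times\). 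Your handling of the degenerate direction is actually more complete than the paper's: the paper asserts in one line that \(\nrd\alpha=0\) makes \(\alpha\) a zero divisor in \(\roi\), whereas your argument (left multiplication on the finite-dimensional algebra \(A\) is not injective, then clear denominators using \(A=K\roi\)) genuinely proves it --- note that the naive cofactor \(\sum_{i=1}^{n}a_i\alpha^{i-1}\) coming from Cayley--Hamilton can vanish for suitable nilpotent \(\alpha\), so some such argument is needed. Part (4) is where you truly diverge: the paper invokes noetherianity of \(\roi\) and cites an external result (Smertnig's Proposition 3.1), while you run an induction on the length function \(\Omega(\alpha)\) counting, with multiplicity, the prime divisors of \(\nrd(\alpha)R\), using that \(\Omega\) is additive and vanishes exactly on units by your parts (2) and (3). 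Your route is self-contained, uses only unique factorization of ideals in the Dedekind domain \(R\), and yields the stronger conclusion that \(\roi^\bullet\) is a bounded-factorization monoid with every factorization length at most \(\Omega(\alpha)\); the paper's citation is shorter and applies to arbitrary noetherian rings, where no norm function is available.
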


\begin{proof}
Write \(f_\alpha(X)=\sum_{i=0}^n a_iX^i\) for the reduced characteristic polynomial of \(\alpha\).
The reduced norm of \(\alpha\) is given by 
\begin{align}\label{eq:nrdcharpoly}
\nrd (\alpha)=(-1)^{n-1}\sum_{i=1}^n a_i\alpha^{i}.
\end{align}
By \cite[Theorem 10.1]{reinermaximal} the polynomial \(f_\alpha\) has coeffcients in \(R[X]\), and 1 follows.
Equation \eqref{eq:nrdcharpoly} shows that \(\alpha\) is a right divisor of \(\nrd \alpha\).
Let \(b,c\in \roi\).
If \(\alpha b=\alpha c\) then  \(\nrd (\alpha)b =\nrd (\alpha) c\).
If \(\nrd \alpha\neq 0\), then it is cancellative hence so is \(\alpha\).
If \(\nrd \alpha =0\) then \(\alpha\) is a zero divisor and therefore cannot be cancellative.
This proves 2.
Next assume \(\alpha\in \roi^\bullet\).
By equation \eqref{eq:nrdcharpoly} the inverse of \(\alpha\) is given by \(\nrd(\alpha)^{-1}(-1)^{n-1}\sum_{i=1}^na_i\alpha^{i-1}\).
Hence \(\alpha\) is invertible in \(\roi\) if \(\nrd \alpha\) is invertible in \(R\).
Conversely assume there exists a \(\beta\in \roi^\bullet\) such that \(\alpha\beta=1\).
Then \(\nrd(\alpha)\nrd(\beta)=\nrd(\alpha\beta)=1\) hence \(\nrd\alpha\) is invertible in \(R^\times\). This proves 3.
To prove 4, we observe that \(\roi\) is noetherian, hence the claim follows by \cite[Proposition 3.1]{smertnig2013}.
\end{proof}

For the remainder of this paper we assume that \(K\) is a number field.
Let \(A\) be a central simple \(K\)-algebra of degree \(n\).
For a place \(\mathfrak{p}\) of \(K\), we define the \textit{completion} of \(A\) at \(\mathfrak{p}\) as \(A_\mathfrak{p}=A\tensor_K K_\mathfrak{p}\).
Then \(A_\mathfrak{p}\) is a central simple \(K_\mathfrak{p}\)-algebra of degree \(n\).
We say that \(A\) is \textit{split} at \(\mathfrak{p}\), if \(A_\mathfrak{p}\cong M_n(K_\mathfrak{p})\), else \(A\) is said to be \textit{ramified} at \(\mathfrak{p}\).
The algebra \(A\) is ramified at only finitely many places of \(K\) \cite[Theorem 25.7]{reinermaximal}, and we write \(\text{Ram}(A)\) for the set of places where \(A\) is ramified.
The algebra \(A\) is said to be \textit{totally ramified} at \(\mathfrak{p}\) if \(A_\mathfrak{p}\) is a division ring, and we write \(\text{TRam}(A)\) for the set of places where \(A\) is totally ramified.
Let \(\roi\subset A\) be an \(R\)-order, and let \(\mathfrak{p}\) be a non-zero prime of \(R\).
We write \(R_\mathfrak{p}\) for the completion of \(R\) at \(\mathfrak{p}\), and \(R_{(\mathfrak{p})}\subset K\) for its localization.
We define the completion of \(\roi\) at \(\mathfrak{p}\) as \(\roi_\mathfrak{p}=\roi\tensor_R R_\mathfrak{p}\), and note that it is an \(R_\mathfrak{p}\)-order in \(A_\fp\).
Furthermore, \(\roi_\fp\) is a maximal \(R_\fp\)-order for all but finitely many primes \(\mathfrak{p}\) \cite[Lemma 9.5.3]{Voightquatalg}.

Let \(\mathfrak{p}\) be a non-zero prime of \(R\) and let \(D_\mathfrak{p}\) be a division ring over \(K_\mathfrak{p}\). 
Write \(d\) for the degree of \(D_\mathfrak{p}\) over \(K_\mathfrak{p}\).
Fix a uniformizer \(\pi\) for the maximal ideal of \(R_\mathfrak{p}\), and let \(v_\mathfrak{p}:K_\mathfrak{p}\to \Z\cup \{\infty\}\) denote the valuation map.
As \(D_\mathfrak{p}\) is a central simple algebra over \(K_\mathfrak{p}\), the reduced norm map allows us to define a valuation on \(D_\mathfrak{p}\) given by \(w_\mathfrak{p}=v_\mathfrak{p}\circ \nrd\).
Note that \(w_\mathfrak{p}(\pi)=d\).
The ring \(D_\mathfrak{p}\) contains a unique maximal order \(\Delta_\mathfrak{p}\) given by 
\[\Delta_\mathfrak{p}=\{\alpha\in D_\mathfrak{p}\mid w_\mathfrak{p}(\alpha)\geq 0\}.\]
It has a unique maximal two-sided ideal \(\mathfrak{P}=\{\alpha\in D_\mathfrak{p}\mid w_\mathfrak{p}(\alpha)>0\}\) which is also the unique maximal left-ideal as well as the unique maximal right-ideal of \(\Delta_\mathfrak{p}\).
It is a principal ideal and after choosing a generator \(\api\), any \(\alpha\in \Delta_\mathfrak{p}\) may be written uniquely as \(\alpha=u\api^{w_\mathfrak{p}(\alpha)}\) with \(u\in \Delta_\mathfrak{p}^\times\).

\begin{lemma}\label{cor:invertnorm}
Let \(B,C\in M_m(\Delta_\mathfrak{p})^\bullet\) and assume that
\[B=C+ \api^tM\]
for some \(t\in \N\) and \(M\in M_m(\Delta_\mathfrak{p})\).
Suppose that either \(t>w_\mathfrak{p}(\nrd B)\), or \(B\in M_r(R[\api])\) and \(t>v_\mathfrak{p}(\nrd B)\).
Then \(C=\gamma B\) for some \(\gamma\in M_m(\Delta_\mathfrak{p})^\times\).
\end{lemma}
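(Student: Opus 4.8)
The plan is to set $\gamma = CB^{-1}$ and show it lies in $M_m(\Delta_\mathfrak{p})^\times$. Since $B \in M_m(\Delta_\mathfrak{p})^\bullet$, its reduced norm is non-zero by Lemma~\ref{lem:nrdatomicity}, so $B$ is invertible in the ambient central simple algebra $M_m(D_\mathfrak{p})$ and $\gamma$ is well defined. From $B = C + \api^t M$ one gets
\[ \gamma = CB^{-1} = (B - \api^t M)B^{-1} = I - \api^t M B^{-1}, \]
so everything reduces to showing that $N := \api^t M B^{-1}$ has all entries in the maximal ideal $\mathfrak{P}$ of $\Delta_\mathfrak{p}$, i.e. $w_\mathfrak{p}$-valuation at least $1$. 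Granting this, $N$ lies in the Jacobson radical $M_m(\mathfrak{P})$ of $M_m(\Delta_\mathfrak{p})$, whence $\gamma = I - N$ is a unit and $C = \gamma B$, as claimed.

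The \emph{heart} of the argument is a bound on the valuations of the entries of $B^{-1}$, and this is where the two hypotheses enter. I work entrywise with $w_\mathfrak{p}$, recalling that it is additive on products and satisfies the ultrametric inequality, and that on $K_\mathfrak{p}$ one has $w_\mathfrak{p}(x) = d\,v_\mathfrak{p}(x)$ since $\nrd(x) = x^d$ there. In the general case I would use the reduced characteristic polynomial $f_B(X) = \sum_{i=0}^{md} a_i X^i$ of $B$, whose coefficients $a_i \in R_\mathfrak{p}$ are integral by Lemma~\ref{lem:nrdatomicity}(1) and which satisfies Cayley--Hamilton. Exactly as in the proof of Lemma~\ref{lem:nrdatomicity} this yields
\[ B^{-1} = \nrd(B)^{-1}(-1)^{md-1}\sum_{i=1}^{md} a_i B^{i-1}, \]
and since the matrix $\sum_{i=1}^{md} a_i B^{i-1}$ has entries in $\Delta_\mathfrak{p}$ while $w_\mathfrak{p}(\nrd(B)^{-1}) = -w_\mathfrak{p}(\nrd B)$, every entry of $B^{-1}$ has $w_\mathfrak{p}$-valuation at least $-w_\mathfrak{p}(\nrd B)$. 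The entries of $\api^t M$ have valuation at least $t$, so those of $N$ are at least $t - w_\mathfrak{p}(\nrd B) \geq 1$ under the hypothesis $t > w_\mathfrak{p}(\nrd B)$.

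For the sharper second bound I would exploit that $R[\api]$ is the ring of integers $\roi$ of the totally ramified degree-$d$ subfield $L = K_\mathfrak{p}(\api)$, a commutative discrete valuation ring with uniformizer $\api$, on which $w_\mathfrak{p}$ restricts to the normalized valuation $v_L$. For $B$ with entries in this ring the ordinary adjugate gives $B^{-1} = \det_L(B)^{-1}\operatorname{adj}_L(B)$ with $\operatorname{adj}_L(B)$ integral, so each entry of $B^{-1}$ has $v_L$-valuation at least $-v_L(\det_L B)$. The identity $\nrd(B) = N_{L/K_\mathfrak{p}}(\det_L B)$ together with $v_\mathfrak{p}\circ N_{L/K_\mathfrak{p}} = v_L$ (totally ramified, residue degree one) gives $v_L(\det_L B) = v_\mathfrak{p}(\nrd B)$, so the entries of $N$ now have valuation at least $t - v_\mathfrak{p}(\nrd B) \geq 1$ under the weaker hypothesis $t > v_\mathfrak{p}(\nrd B)$.

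I expect the main obstacle to be bookkeeping the two normalizations $w_\mathfrak{p}$ and $v_\mathfrak{p}$ correctly: the factor $d$ between them on $K_\mathfrak{p}$ is precisely what distinguishes the two hypotheses, and it is what forces the weaker threshold in the general case while allowing the tighter one when $B$ has entries in the commutative ring $R[\api]$. Justifying the reduced-norm identity $\nrd(B) = N_{L/K_\mathfrak{p}}(\det_L B)$ and the identification $R[\api] = \roi$ with the maximal order of the totally ramified extension $L$ are the substantive points underlying the second case; the concluding unit argument $\gamma = I - N \in M_m(\Delta_\mathfrak{p})^\times$ is then routine once $N \in M_m(\mathfrak{P})$.
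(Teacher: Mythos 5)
Your proposal is correct and takes essentially the same route as the paper's proof: the same reduction to showing that $\gamma = CB^{-1} = I - \api^t M B^{-1}$ lies in $M_m(\Delta_\mathfrak{p})$ and is a unit via the Jacobson radical, the same Cayley--Hamilton expression bounding the entries of $B^{-1}$ by $-w_\mathfrak{p}(\nrd B)$ in the first case, and the same commutative determinant/adjugate argument in the second. The only difference is that you supply a justification (via $\nrd B = N_{L/K_\mathfrak{p}}(\det\nolimits_L B)$ for the totally ramified subfield $L = K_\mathfrak{p}(\api)$ with valuation ring $R_\mathfrak{p}[\api]$) for the key identity $w_\mathfrak{p}(\det B) = v_\mathfrak{p}(\nrd B)$, which the paper asserts without proof.
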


\begin{proof}
Write \(f=\sum_{i=0}^{md} a_iX^i\) for the reduced characteristic polynomial of \(B\).
As \(B\) is cancellative, it is invertible in \(M_m(D_\mathfrak{p})\), and its inverse is given by
\[B^{-1}= \frac{(-1)^{md-1}}{\nrd B}\sum_{i=1}^{md}a_i B^{i-1}.\]
It follows that \(B^{-1}\in \api^{-w_\mathfrak{p}(\nrd B)}M_m(\Delta_\mathfrak{p})\).
Let us first assume that \(t>w_\mathfrak{p}(\nrd B)\).
Then \(\api^t MB^{-1}\) is contained in \(\api M_m(\Delta_\mathfrak{p})\), hence \(CB^{-1}=I_r-\api^t MB^{-1}\) is contained in \(M_m(\Delta_\mathfrak{p})\).
As \(\api\in \text{rad}(M_m(\Delta_\mathfrak{p}))\) and \(CB^{-1}\) is invertible in \(M_m(\Delta_\mathfrak{p})/\text{rad}(M_m(\Delta_\mathfrak{p}))\) it follows that \(CB^{-1}\) is invertible in \(M_m(\Delta_\mathfrak{p})\)  and we have \(C=CB^{-1}B\).

If \(B\in M_m(R[\api])\) then the coefficients of \(B\) commute.
In particular \(B\) has a well defined determinant in \(\Delta_\mathfrak{p}\), as well as an adjugate inside \(M_m(\Delta_\mathfrak{p})\).
Its inverse is then given by \(B^{-1}=\det(B)^{-1}\text{Adj}(B)\in \api^{-w_\mathfrak{p}(\det B)}M_m(\Delta_\mathfrak{p})\).
As \(w_\mathfrak{p}(\det B)=v_\mathfrak{p}(\nrd B)\), the claim follows analogously.
\end{proof}

Let \(\tilde{m}=(m_1,...,m_r)\) be a partition of \(m\).
We can view the elements of \(M_m(D_\mathfrak{p})\) as matrices in block form, and observe the vector space \(M_m(D_\mathfrak{p})\) naturally decomposes via
\begin{align}\label{eq:vectordecomp}
\Phi:M_m(D_\mathfrak{p})\xrightarrow{\sim} \bigoplus_{k,l=1}^r M_{m_k\times m_l}(D_\mathfrak{p}).
\end{align}
For any \(B=[B_{m_k\times m_l}]_{k,l}\) and \(C=[C_{m_k\times m_l}]_{k,l}\) in this direct sum we define their product
\[\left[ [B_{m_k\times m_l}]\cdot [C_{m_k\times m_l}]\right]_{p,q}=\sum_{i=1}^rB_{m_p\times m_i}C_{m_i\times m_q}.\]
turning \(\Phi\) into an algebra isomorphism in a natural way.
A matrix \(B=[B_{m_k\times m_l}]_{k,l}\) is said to be \textit{block diagonal} if \(B_{m_k\times m_l}=0\) for all pairs \(k\neq l\).

\begin{lemma}\label{lem:block diagonal}
Let \(m\in \Z_{\geq 1}\) be given and let \((m_1,...,m_r)\) be a partition of \(m\). 
Let \(B,C\in M_m(\Delta_\mathfrak{p})^\bullet\) be given and assume that \(B\) is block diagonal.
Then \(B\) is a right divisor of \(C\) if and only if \(B_{m_l\times m_l}\) is a right divisor of \(C_{m_k\times m_l}\) for all pairs \(k,l\). 
\end{lemma}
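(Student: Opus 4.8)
The plan is to deduce the equivalence directly from the block-multiplication rule recorded after~\eqref{eq:vectordecomp}, using the single structural fact that block-diagonality of $B$ collapses the middle sum to one term. Concretely, for any $X = [X_{m_k \times m_l}]_{k,l}$ the product rule reads $(XB)_{m_p \times m_q} = \sum_{i=1}^r X_{m_p \times m_i} B_{m_i \times m_q}$, and since $B_{m_i \times m_q} = 0$ whenever $i \neq q$, I would first observe that this simplifies to $(XB)_{m_p \times m_q} = X_{m_p \times m_q} B_{m_q \times m_q}$. This identity carries essentially the whole lemma.

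For the forward direction I would take a witness $X \in M_m(\Delta_\mathfrak{p})$ with $C = XB$ and compare blocks: the displayed identity gives $C_{m_k \times m_l} = X_{m_k \times m_l} B_{m_l \times m_l}$ for every pair $(k,l)$, so each $X_{m_k \times m_l}$ exhibits $B_{m_l \times m_l}$ as a right divisor of $C_{m_k \times m_l}$. For the converse I would, given for each pair $(k,l)$ a block $Y_{m_k \times m_l} \in M_{m_k \times m_l}(\Delta_\mathfrak{p})$ with $C_{m_k \times m_l} = Y_{m_k \times m_l} B_{m_l \times m_l}$, assemble $Y = [Y_{m_k \times m_l}]_{k,l} \in M_m(\Delta_\mathfrak{p})$ and verify $YB = C$ block by block via the same identity.

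The only point requiring care — and the closest thing to an obstacle — is checking that the divisibility statements live in the right monoids of cancellative elements. Here I would use multiplicativity of the reduced norm: since $B$ is block diagonal, $\nrd B = \prod_{l=1}^r \nrd B_{m_l \times m_l}$, and $\nrd B \neq 0$ by Lemma~\ref{lem:nrdatomicity} forces each diagonal block $B_{m_l \times m_l}$ into $M_{m_l}(\Delta_\mathfrak{p})^\bullet$, so that right-divisibility by it is meaningful. Dually, from $C = YB$ and $\nrd C = \nrd Y \cdot \nrd B \neq 0$ one sees $\nrd Y \neq 0$, so the assembled quotient is automatically cancellative and no extra verification is needed. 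I do not anticipate any deeper difficulty: the argument is bookkeeping around the collapse of the middle index, with the reduced-norm factorization being the one spot where attention is warranted.
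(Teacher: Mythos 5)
Your proof is correct and takes essentially the same approach as the paper: the paper's entire proof is the remark that the lemma ``follows directly from matrix multiplication,'' which is precisely the block-collapse identity $(XB)_{m_p\times m_q}=X_{m_p\times m_q}B_{m_q\times m_q}$ that you spell out in both directions. Your extra reduced-norm verification that the quotient $Y$ and the diagonal blocks $B_{m_l\times m_l}$ are cancellative is a harmless elaboration of what the paper leaves implicit.
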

\begin{proof}
This follows directly from matrix multiplication.
\end{proof}

\begin{definition}\label{def:hermite}
Let \(\roi\subset A\) be an \(R\)-order.
Then
\begin{enumerate}
\item \(\roi\) is \textit{Hermite} if every stably free, locally free right \(\roi\)-module is free.
\item \(\roi\) is \textit{hereditary} if every right ideal of \(\roi\) is projective.
\end{enumerate}
\end{definition}

Combining Jacobinski's Cancellation Theorem (\cite[Theorem 51.24]{curtisreiner}), with \cite[Proposition 51.2]{curtisreiner} shows that every order \(\roi\) inside a central simple algebra \(A/K\) of degree \(n>2\) is Hermite.
Being hereditary is a local property, that is \(\roi\) is hereditary if and only if the same holds for the completions \(\roi_\mathfrak{p}\) for all non-zero primes \(\mathfrak{p}\) of \(R\).
Hereditary orders have been studied in great detail and we refer to \cite[Part 9]{reinermaximal} for more background on this topic.
Every maximal order is hereditary, and since a global order \(\roi\subset A\) is maximal at all but finitely many non-zero primes of \(R\) the set of primes where \(\roi_
\mathfrak{p}\) is not hereditary is finite.
We conclude this subsection with an important consequence of the local norm map with respect to factorization theory due to  D. Estes

\begin{lemma}[{\cite{estes1991}}]\label{lem:hereditarytransfer}
Let \(\roi_\mathfrak{p}\subset M_m(D_\mathfrak{p})\) be a hereditary \(R_\mathfrak{p}\)-order.
The reduced norm map \(\nrd: \roi_\mathfrak{p}^\bullet\to R_\mathfrak{p}^\bullet\) is a surjective transfer homomorphism.
In particular \(\rho(\roi_\mathfrak{p}^\bullet)=1\) for all hereditary orders.
\end{lemma}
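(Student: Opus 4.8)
The plan is to verify the two conditions of Definition~\ref{def:transferhom} for \(\varphi=\nrd\colon\roi_\fp^\bullet\to R_\fp^\bullet\), after first isolating a single combinatorial statement that implies everything. By Lemma~\ref{lem:nrdatomicity} the reduced norm is a well-defined monoid homomorphism into \(R_\fp^\bullet\), and part (3) of that lemma is exactly the condition \(\varphi^{-1}(R_\fp^\times)=\roi_\fp^\times\). Since \(R_\fp\) is a complete discrete valuation ring, \(R_\fp^\bullet=R_\fp^\times\{\,\pi^k:k\ge 0\,\}\) is commutative and half-factorial, with the associate class of the uniformizer \(\pi\) as its only atom. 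Writing \(w=v_\fp\circ\nrd\colon\roi_\fp^\bullet\to\Z_{\ge 0}\), a homomorphism with \(w^{-1}(0)=\roi_\fp^\times\), I would reduce the whole lemma to the claim \((\star)\): every \(\alpha\in\roi_\fp^\bullet\) with \(w(\alpha)\ge 1\) can be written \(\alpha=\beta\gamma\) with \(\beta,\gamma\in\roi_\fp^\bullet\) and \(w(\beta)=1\), together with surjectivity of \(\nrd\). Indeed, since \(w\) is additive and vanishes exactly on units, any element with \(w=1\) is automatically an atom; conversely \((\star)\) shows by induction that these are the \emph{only} atoms and that every element factors into \(w=1\) atoms. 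Grouping such a factorization into two blocks then yields condition (2) (the leftover associate being absorbed into \(\epsilon\), using commutativity of \(R_\fp^\bullet\)), while surjectivity gives \(R_\fp^\bullet=R_\fp^\times\,\nrd(\roi_\fp^\bullet)\,R_\fp^\times\) and hence condition (1). Proposition~\ref{theorem:transferhom} then transports half-factoriality to give \(\rho(\roi_\fp^\bullet)=1\).

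For surjectivity I would first conjugate \(\roi_\fp\) into standard (triangular) form; conjugation by a unit of \(A_\fp\) preserves \(\nrd\) and induces a monoid isomorphism on the \(\bullet\)-monoids, so it changes nothing. In the division algebra one has \(\nrd(\Delta_\fp^\bullet)=R_\fp^\bullet\): units map onto \(R_\fp^\times\), while \(w_\fp(\api)=1\) makes \(\nrd(\api)\) a uniformizer. Placing an element \(\delta\in\Delta_\fp^\bullet\) in one diagonal slot and \(1\)'s elsewhere produces a matrix in \(\roi_\fp\) whose reduced norm is \(\nrd(\delta)\), by multiplicativity of the reduced norm across diagonal blocks; letting \(\delta\) vary realizes all of \(R_\fp^\bullet\).

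The heart of the argument, and the step I expect to be the main obstacle, is \((\star)\). I would use the description of a standard hereditary order as the stabilizer of a periodic chain of full \(\Delta_\fp\)-lattices \(L_0\supsetneq L_1\supsetneq\cdots\supsetneq L_{r-1}\supsetneq L_r=\api L_0\), extended by \(L_{i+r}=\api L_i\), so that \(\roi_\fp=\{\,x\in A_\fp:xL_i\subseteq L_i\text{ for all }i\,\}\), and interpret \(w(\alpha)\) as the \(\Delta_\fp\)-length of \(L_0/\alpha L_0\). Peeling off a \(w=1\) factor then means producing \(\beta\in\roi_\fp\) with \(L_0/\beta L_0\) simple (colength one) and \(\alpha L_i\subseteq\beta L_i\subseteq L_i\) for every \(i\), for then \(\gamma=\beta^{-1}\alpha\) again stabilizes the chain and lies in \(\roi_\fp^\bullet\). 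Concretely I would carry this out in coordinates: Lemma~\ref{cor:invertnorm} lets me replace \(\alpha\) by an element of \(M_m(R[\api])\) that is sufficiently close to it \(\api\)-adically without altering its divisibility up to a unit, after which elementary row and column operations with entries in \(\roi_\fp^\times\) bring it to a block-triangular normal form from which a single \(\api\) can be split off, with Lemma~\ref{lem:block diagonal} certifying the required right-divisibility of the peeled factor.

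The subtle point is that the colength-one factor \(\beta\) must respect the \emph{entire} periodic flag simultaneously, not merely the single lattice \(L_0\). For the maximal order \(M_m(\Delta_\fp)\) the chain has length one and this is just the elementary-divisor normal form over the noncommutative discrete valuation ring \(\Delta_\fp\); the difficulty in the genuinely multi-block hereditary case is to choose the colength-one step compatibly at all \(r\) lattices at once. This compatibility is precisely what hereditariness supplies and what fails for non-hereditary completions — mirroring the paper's overarching theme — so I would expect the verification that the peeled factor stabilizes every \(L_i\) to require the full structure theory of hereditary orders and to be the technical core of the proof.
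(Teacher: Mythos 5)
The paper offers no proof of this lemma to compare against: it is imported verbatim from Estes \cite{estes1991} (hence the citation in the statement), so your attempt has to stand entirely on its own. The framing parts of your argument do stand: Lemma~\ref{lem:nrdatomicity}(3) gives \(\nrd^{-1}(R_\fp^\times)=\roi_\fp^\times\); since \(R_\fp^\bullet\) is commutative with \(\pi\) as its unique atom up to associates and \(w=v_\fp\circ\nrd\) is additive and vanishes exactly on units, your claim \((\star)\) (every non-unit of \(\roi_\fp^\bullet\) has a left factor of \(w\)-value \(1\)) together with surjectivity does imply both transfer axioms and half-factoriality; and the surjectivity argument via the diagonal embedding of \(\Delta_\fp^\bullet\), the surjectivity of \(\nrd\colon\Delta_\fp^\times\to R_\fp^\times\), and \(w_\fp(\api)=1\) is correct.

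The genuine gap is \((\star)\) itself, which is not a technical footnote but the entire content of Estes' theorem, and you do not prove it -- you describe a plan and then state that you ``expect'' its key verification to go through. Moreover, the tools you name would not carry it out as described. First, Lemma~\ref{cor:invertnorm} produces a unit \(\gamma\in M_m(\Delta_\fp)^\times\), not a unit of \(\roi_\fp\); so replacing \(\alpha\) by a \(\api\)-adically close element of \(M_m(R[\api])\) preserves left-associateness only in the \emph{maximal} order, and says nothing about divisibility inside the hereditary order \(\roi_\fp\) unless the unit happens to lie in \(\roi_\fp^\times\). Second, ``elementary row and column operations with entries in \(\roi_\fp^\times\)'' are heavily constrained by the block structure (compare Proposition~\ref{prop:invertible}); showing that they reach a normal form from which a colength-one factor splits off \emph{compatibly with all \(r\) lattices of the periodic chain at once} is exactly the point where hereditariness must enter -- indeed Proposition~\ref{prop:standardelas} shows that \((\star)\) genuinely fails for non-hereditary tiled orders, so no argument that does not use hereditariness in an essential way can close this step. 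As submitted, the proposal is a correct reduction of the lemma to its hard core, plus an honest admission that the hard core is untouched; that is a gap, not a proof.
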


\subsection{Adeles}
Let \(K\) be a number field with ring of integers \(R\).
Let \(S\subset \text{Pl}(K)\) be a finite set of places of \(K\).
The set \(S\) is said to be \textit{eligible} if it contains all archimedean places of \(K\).
Let \(R_S\) denote the ring of \(S\)-integers in \(K\),
that is \(R_S=\bigcap_{\mathfrak{p}\notin S}R_{(\mathfrak{p})}\), and let \(A/K\) be a central simple algebra of degree \(n\).

Given an \(R_S\)-order \(\roi\subset A\), we define the \textit{monoid of \(S\)-adeles of \(\roi\)} as
\[\widehat{\roi}=\sideset{}{'}\prod_{\mathfrak{p}\notin S} \roi_{\mathfrak{p}}= \left\{(\alpha_{\mathfrak{p}})_\mathfrak{p}\in \prod_{\mathfrak{p}\notin S}\roi_{\mathfrak{p}}~\middle| ~\alpha_{\mathfrak{p}}\in \roi_{\mathfrak{p}}^\times\text{ for all but finitely many }\mathfrak{p}\right\},\]
and we define \(\widehat{R}\) analogously.
Its submonoid \(\widehat{\roi}^\bullet\) of cancellative elements is given by the restricted product \(\prod_{\mathfrak{p}\notin S}' \roi_{\mathfrak{p}}^\bullet\), and its invertible elements are given by \(\widehat{\roi}^\times= \prod_{\mathfrak{p}\notin S} \roi_\mathfrak{p}^\times\).
Since \(\roi_\mathfrak{p}^\bullet\) is atomic for all \(\mathfrak{p}\notin S\), so is \(\widehat{\roi}^\bullet\) by Lemma \ref{lem:restricttransfer}.
We further define the \textit{\(S\)-adele ring of \(A\)} as
\[\widehat{A}=\sideset{}{'}\prod_{\mathfrak{p}\notin S} A_{\mathfrak{p}}= \left\{(\alpha_{\mathfrak{p}})_\mathfrak{p}\in \prod_{\mathfrak{p}\notin S}A_{\mathfrak{p}}~\middle| ~\alpha_{\mathfrak{p}}\in \roi_{\mathfrak{p}}\text{ for all but finitely many }\mathfrak{p}\right\},\]
and define \(\widehat{K}\) anologously.
This definition is independent of the chosen order \(\roi\), as the completions of any two orders \(\roi,~\roi'\subset A\) differ at only finitely many non-zero primes of \(R_S\) \cite[Theorem 9.4.9]{Voightquatalg}.
For every prime \(\mathfrak{p}\notin S\) we can choose a uniformizer \(\pi_\mathfrak{p}\in R\) for the maximal ideal of \(R_\mathfrak{p}\).
Since \(K_\mathfrak{p}^\times= R_\mathfrak{p}^\times\pi_\mathfrak{p}^\Z=K^\times R_\mathfrak{p}^\times\), and since \(R_\mathfrak{p}^\times \subset \roi_\mathfrak{p}^\times\), we conclude that \(K\roi_\mathfrak{p}=A_\mathfrak{p}\).
In particular we have \(K^\times \widehat{\roi}^\bullet=\widehat{A}^\times\).

The reduced norm maps \(\nrd:\roi_\mathfrak{p}^\bullet\to R_\mathfrak{p}^\bullet\) give rise to a reduced norm map \(\nrd:\widehat{\roi}^\bullet\to \widehat{R}^\bullet\).
The cancellative monoid \(\roi^\bullet\) naturally embeds into \(\widehat{\roi}^\bullet\), and the reduced norm map \(\nrd:\roi^\bullet\to R_S^\bullet\) and the diagram
\[\begin{tikzcd} \roi^\bullet \arrow[r, "\nrd"] \arrow[d, hook] & R_S^\bullet \arrow[d, hook]\\
\widehat{\roi}^\bullet \arrow[r,"\nrd"] & \widehat{R}^\bullet
\end{tikzcd}\]
commutes.
We further claim that the embedding \(\roi^\bullet\hookrightarrow \widehat{\roi}^\bullet\) is a left divisor homomorphism.
That is, for any \(a,b\in \roi^\bullet\) such that \(b^{-1}a\in\widehat{\roi}^\bullet\), we have \(b^{-1}a\in \roi^\bullet\).

\begin{lemma}\label{lem:leftdiv}
Let \(\roi\subset A\) be an \(R_S\)-order.
The natural map \(\roi^\bullet\to \widehat{\roi}^\bullet, \alpha \mapsto (\alpha_{\mathfrak{p}})_{\mathfrak{p}}\) is a left divisor homomorphism.
\end{lemma}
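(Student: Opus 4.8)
The plan is to reduce the statement to the standard local–global description of the lattice $\roi$ inside $A$. Let $a,b\in\roi^\bullet$ with $c:=b^{-1}a\in\widehat{\roi}^\bullet$. By Lemma~\ref{lem:nrdatomicity} the elements $a$ and $b$ have nonzero reduced norm and are therefore invertible in $A$, so $c=b^{-1}a$ is a well-defined element of $A$. Under the diagonal embedding $A\hookrightarrow\widehat{A}$ its image is the tuple $(b_\fp^{-1}a_\fp)_\fp$, and the hypothesis $c\in\widehat{\roi}^\bullet$ says precisely that $c_\fp\in\roi_\fp^\bullet\subseteq\roi_\fp$ for every prime $\fp\notin S$. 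Thus everything reduces to the implication: if $c\in A$ satisfies $c_\fp\in\roi_\fp$ for all $\fp\notin S$, then $c\in\roi$. Once this is known, the membership $c\in\roi^\bullet$ follows at once, since $\nrd(c)=\nrd(b)^{-1}\nrd(a)\neq0$ by multiplicativity of the reduced norm, and then Lemma~\ref{lem:nrdatomicity} applies.

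To prove the local–global implication I would first compare the completion with the localization at a fixed $\fp\notin S$. Since $R_{(\fp)}$ is a discrete valuation ring, the full lattice $\roi_{(\fp)}$ is free over $R_{(\fp)}$, and any $R_{(\fp)}$-basis $e_1,\dots,e_N$ of $\roi_{(\fp)}$ (with $N=\dim_K A$) is simultaneously a $K$-basis of $A$ and an $R_\fp$-basis of the completion $\roi_\fp=\roi_{(\fp)}\tensor_{R_{(\fp)}}R_\fp$. Writing $c=\sum_i x_i e_i$ with $x_i\in K$, the condition $c_\fp\in\roi_\fp$ forces $x_i\in R_\fp$ for all $i$; combined with $x_i\in K$ and the identity $R_\fp\cap K=R_{(\fp)}$ inside $K_\fp$, this gives $x_i\in R_{(\fp)}$ and hence $c\in\roi_{(\fp)}$. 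In other words $\roi_{(\fp)}=A\cap\roi_\fp$.

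It then remains to intersect over all primes, and this is the step I expect to carry the real content. Consider the denominator ideal $I=\{\,r\in R_S : rc\in\roi\,\}$, which is a nonzero ideal of $R_S$ because $c\in A=K\roi$ (clear denominators of a $K$-expansion of $c$ in a generating set of $\roi$). For each $\fp\notin S$ the previous paragraph gives $c\in\roi_{(\fp)}$, so there is an $s\in R_S\setminus\fp$ with $sc\in\roi$; hence $s\in I$ and $I\not\subseteq\fp$. As this holds for every maximal ideal $\fp$ of $R_S$, the ideal $I$ lies in no maximal ideal, which forces $I=R_S$ and in particular $1\in I$, i.e.\ $c\in\roi$. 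Together with the norm computation this yields $c\in\roi^\bullet$, so the embedding is a left divisor homomorphism.

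The main obstacle is really the bookkeeping in the local-to-global intersection $\roi=\bigcap_{\fp\notin S}\roi_{(\fp)}$ and the identification $\roi_{(\fp)}=A\cap\roi_\fp$; both are standard for lattices over the Dedekind domain $R_S$ (see \cite{reinermaximal}), and the only point requiring care is that the completion $\roi_\fp$ and the localization $\roi_{(\fp)}$ must be compared through a common free $R_{(\fp)}$-basis so that the scalar-wise condition $R_\fp\cap K=R_{(\fp)}$ can be applied coordinate by coordinate.
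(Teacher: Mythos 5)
Your proposal is correct and follows essentially the same route as the paper: reduce to showing that an element of $A$ lying in $\roi_\fp$ for every $\fp\notin S$ already lies in $\roi$, by passing through the localizations via $A\cap\roi_\fp=\roi_{(\fp)}$ and then intersecting over all primes, with the cancellativity handled by the reduced norm. The only difference is one of detail: the paper simply cites \cite[Lemma 9.5.3]{Voightquatalg} for the identification $\roi_\fp^\bullet\cap A^\times=\roi_{(\fp)}^\bullet$ and takes $\roi^\bullet=\bigcap_{\fp\notin S}\roi_{(\fp)}^\bullet$ as known, whereas you prove both facts from scratch (free-basis comparison over the discrete valuation ring $R_{(\fp)}$, and the denominator-ideal argument), which is sound.
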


\begin{proof}
Let \(\mathfrak{p}\) be a non-zero prime of \(R_S\).
As \(\roi_{(\mathfrak{p})}\) is an \(R_{(\mathfrak{p})}\)-lattice,
it follows from \cite[Lemma 9.5.3]{Voightquatalg} that \(\roi_{\mathfrak{p}}^\bullet \cap A^\times = \roi_{(\mathfrak{p})}^\bullet\).
Let \(a,b\in \roi^\bullet\) and assume that \(b^{-1}a\in\roi_{\mathfrak{p}}^\bullet\) for all \(\mathfrak{p}\).
Since \(b^{-1}a\in A\), it follows that \(b^{-1}a\in \roi_{(\mathfrak{p})}^\bullet\) for all primes \(\mathfrak{p}\), and thus that
\[b^{-1}a\in \bigcap_{\mathfrak{p}\notin S}\roi_{(\mathfrak{p})}^\bullet =\roi^\bullet.\]
\end{proof}

The quotient \(\widehat{K}^\times/\widehat{R}^\times\) is naturally isomorphic to the group of fractional \(R_S\)-ideals, under which the principal \(R_S\)-ideals coincide with the image of \(K^\times\) in \(\widehat{K}\).
This allows for a natural group isomorphism 
\[\text{Cl}(R_S)\xrightarrow{\sim}K^\times\backslash \widehat{K}^\times/\widehat{R}^\times.\]
Since \(K^\times \widehat{R}^\bullet=\widehat{K}^\times\), we may replace \(\widehat{K}^\times\) with \(\widehat{R}^\bullet\) in this quotient.
Similarly the quotient \(\widehat{A}^\times/ \widehat{\roi}^\times\) may be identified with the set of locally principal right fractional \(\roi\)-ideals \cite[Lemma 27.6.8] {Voightquatalg}.
Any such two ideals \(I,J\) are in the same \textit{right class} if there exists an \(\alpha\in A^\times\) such that \(\alpha I=J\).
This defines an equivalence relation and the set of equivalence classes is known as the \textit{right class set} of \(\roi\).
We write \(\text{Cls}(\roi)\) for this set, and note that we have a natural identification
\[\text{Cls}(\roi)\xrightarrow{\sim} A^\times\backslash \widehat{A}^\times /\widehat{\roi}^\times.\]
As \(K^\times \widehat{\roi}^\bullet=\widehat{A}^\times\) we may replace \(\widehat{A}^\times\) with \(\widehat{\roi}^\bullet\) in this quotient.
The reduced norm map allows us to relate the right class set of \(\roi\) with a ray class group of \(K\).
To this end we define
\[K_A^\times = \{a\in K^\times \mid a_v>0 \text{ for all archimedean } v\in \text{Ram}(A)\}.\]

\begin{lemma}\label{lem:nrdsurjclass}
Let \(\roi\subset A\) be an \(R_S\)-order.
The reduced norm map induces a  well-defined surjective map of pointed sets
\begin{align}\label{eq:nrdsurjclass}
\nrd: A^\times \backslash\widehat{\roi}^\bullet/\widehat{\roi}^\times\to K_A^\times\backslash \widehat{R}^\bullet/\nrd(\widehat{\roi}^\times).
\end{align}
The latter is a finite abelian group that may be realized as a ray class group over \(K\).
In particular every class contains infinitely many primes.
\end{lemma}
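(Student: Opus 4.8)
The plan is to define the map on representatives by \(A^\times a\,\widehat{\roi}^\times \mapsto K_A^\times\,\nrd(a)\,\nrd(\widehat{\roi}^\times)\) and then to check, in order, that it is well defined, that the target is a finite abelian group of the stated shape, and that the map is onto.

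\textbf{Well-definedness.} Since \(\nrd\) is multiplicative, it suffices to show \(\nrd(A^\times)\subseteq K_A^\times\): for \(\gamma\in A^\times\) and \(u\in\widehat{\roi}^\times\) one writes \(\nrd(\gamma a u)=\nrd(\gamma)\nrd(a)\nrd(u)\), and then \(\nrd(\gamma)\in K_A^\times\) is absorbed on the left while \(\nrd(u)\in\nrd(\widehat{\roi}^\times)\) is absorbed on the right. The inclusion \(\nrd(A^\times)\subseteq K_A^\times\) is the archimedean (easy) direction of the Hasse--Schilling theorem: at a real place \(v\in\text{Ram}(A)\) one has \(A_v\cong M_{n/2}(\mathbb{H})\), whose reduced norm takes only positive values, so \(\nrd(\gamma)_v>0\).

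\textbf{The target.} First I would observe that \(\roi_\fp^\times\) is a compact open subgroup of \(A_\fp^\times\) and that \(\nrd\colon A_\fp^\times\to K_\fp^\times\) is continuous and open, whence \(U:=\nrd(\widehat{\roi}^\times)\) is an open, finite-index subgroup of \(\widehat R^\times\) containing a congruence subgroup \(\prod_\fp(1+\fp^{e_\fp}R_\fp)\); at the cofinitely many \(\fp\) where \(\roi_\fp\) is maximal one moreover has \(\nrd(\roi_\fp^\times)=R_\fp^\times\), so only finitely many \(e_\fp\) are nonzero. Together with the sign conditions at the real places of \(\text{Ram}(A)\) carried by \(K_A^\times\), class field theory then identifies \(K_A^\times\backslash\widehat R^\bullet/U\) with a ray class group of \(K\), for the modulus \(\mathfrak m=\bigl(\prod_\fp \fp^{e_\fp}\bigr)\cdot\prod_{v}v\) with \(v\) ranging over the real places in \(\text{Ram}(A)\) (relative to \(S\)). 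This is finite abelian, and by the Chebotarev density theorem every one of its classes is represented by infinitely many prime ideals of \(R_S\); this gives the final assertion.

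\textbf{Surjectivity.} The composite \(\widehat{\roi}^\bullet\xrightarrow{\nrd}\widehat R^\bullet\twoheadrightarrow G:=K_A^\times\backslash\widehat R^\bullet/U\) is a homomorphism of monoids, so its image is a submonoid of the finite group \(G\), hence a subgroup; this image equals the image of the map in question, so it suffices to see it is all of \(G\). For a prime \(\mathfrak q\notin S\) at which \(\roi_{\mathfrak q}\) is maximal, Lemma~\ref{lem:hereditarytransfer} gives \(\nrd(\roi_{\mathfrak q}^\bullet)=R_{\mathfrak q}^\bullet\); taking the adele equal to a preimage under \(\nrd\) of a uniformizer of \(R_{\mathfrak q}\) at \(\mathfrak q\) and to \(1\) elsewhere shows that the class \([\mathfrak q]\in G\) lies in the image. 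Since \(\roi\) is maximal at all but finitely many primes and, by Chebotarev, every class of \(G\) is represented by some prime avoiding that finite set (and \(S\)), the image contains every class. The main obstacle I anticipate is precisely the middle step: matching the double coset \(K_A^\times\backslash\widehat R^\bullet/U\) to a ray class group by reading off the finite-part modulus from the open subgroup \(\nrd(\widehat{\roi}^\times)\) and correctly encoding the archimedean sign conditions coming from \(K_A^\times\), i.e.\ the careful bookkeeping underlying the class field theory and Chebotarev inputs.
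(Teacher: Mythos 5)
Your proposal is correct, but your surjectivity argument takes a genuinely different route from the paper's. The paper replaces the monoid double cosets by group double cosets via \(K^\times\widehat{\roi}^\bullet=\widehat{A}^\times\) and \(K^\times\widehat{R}^\bullet=\widehat{K}^\times\) (established in the preliminaries), after which surjectivity is immediate from the surjectivity of the local reduced norms \(A_\fp^\times\to K_\fp^\times\); this needs neither Chebotarev nor the ray-class identification. You instead hit every class with primes: at the cofinitely many \(\fp\) where \(\roi_\fp\) is maximal, Lemma~\ref{lem:hereditarytransfer} gives \(\nrd(\roi_\fp^\bullet)=R_\fp^\bullet\), and Chebotarev, applied to the group you have just identified as a ray class group, supplies a representing prime avoiding the bad set. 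This works, but it makes surjectivity logically dependent on the ray-class-group structure, whereas in the paper the two assertions are independent and Chebotarev enters only for the final claim that every class contains infinitely many primes. Two smaller differences: your well-definedness step needs only the easy inclusion \(\nrd(A^\times)\subseteq K_A^\times\) (positivity of reduced norms on matrix rings over \(\mathbb{H}\)), a genuine economy over the paper's appeal to the full Hasse--Schilling--Maass equality; and for the target, where you assert that \(\nrd\) is an open map in order to see that \(\nrd(\widehat{\roi}^\times)\) is open of finite index in \(\widehat{R}^\times\), the paper argues more elementarily by sandwiching \(\roi_\fp^\times\) between congruence subgroups of a maximal order \(\roi'_\fp\supseteq\roi_\fp\) and computing indices, avoiding any appeal to openness of \(\nrd\) (a true but unproved assertion in your write-up). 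Both routes then delegate the identification with a ray class group to the same class-field-theory input.
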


\begin{proof}
We may replace the factors \(\widehat{\roi}^\bullet\) and \(\widehat{R}^\bullet\) in equation \eqref{eq:nrdsurjclass} with \(\widehat{A}^\times\) and \(\widehat{K}^\times\) respectively.
By the Hasse-Schilling-Maass norm theorem \cite[Theorem 33.15]{reinermaximal},we have \(\nrd A^\times=K_A^\times\).
Surjectivity of the map of pointed sets now follows as the reduced norm map \(\nrd: \widehat{A}^\times \to \widehat{K}^\times\) is surjective.

Combining Theorem 27.5.10 and Remarks 27.5.12, 27.5.9 of \cite{Voightquatalg},
shows that \(K_A^\times \backslash \widehat{R}^\bullet/\nrd(\widehat{\roi}^\bullet)\) can be realized as a ray class group of \(K\), if \(\nrd (\roi^\times)\) is a finite index open subgroup of \(\widehat{R}^\times\).
If \(\roi_\mathfrak{p}\) is a maximal order, then \(\nrd (\roi_\mathfrak{p}^\times) =R_\mathfrak{p}^\times\).
As \(\roi_\mathfrak{p}\) is maximal at all but finitely many non-zero primes of \(R_S\), it remains to show that \(\nrd(\roi_\mathfrak{p}^\times)\) is a finite index open subgroup of \(R_\mathfrak{p}^\times\).
Let \(\roi'_\mathfrak{p}\subset A_\mathfrak{p}\) be a maximal order containing  \(\roi_\mathfrak{p}\).
Let \(\pi\) be a uniformizer for the maximal ideal of \(R_\mathfrak{p}\).
There exists an \(n\in \N\) such that 
\[1+\pi^{n+1}\roi'_\mathfrak{p}\subset 1+\pi\roi_\mathfrak{p}\subset \roi_\mathfrak{p}^\times \subset \roi_\mathfrak{p}'^\times.\]
On the other hand we have inclusions
\[1+\pi^{n+1}\roi'_\mathfrak{p}\subset 1+\pi\roi'_\mathfrak{p}\subset \roi_\mathfrak{p}'^\times.\]
Note that we have \([1+\pi\roi'_\mathfrak{p}:1+\pi^{n+1}\roi'_\mathfrak{p}]=[\roi'_\mathfrak{p}:\pi^n\roi'_\mathfrak{p}]<\infty\).
Furthermore consider the reduction map \(\roi'_\mathfrak{p}\to\roi'_\mathfrak{p}/\pi\roi'_\mathfrak{p}\).
Restricting to units, we obtain a short exact sequence 
\[1\to 1+\pi\roi'_\fp\to \roi_\fp'^\times\to \left( \roi'_\mathfrak{p}/\pi\roi'_\mathfrak{p}\right)^\times\to 1.\]
Hence the index \([\roi_\fp'^\times:1+\pi\roi'_\fp]\) is finite as well.
It follows \(\roi_\mathfrak{p}^\times\) is a finite index open subgroup of \(\roi_\mathfrak{p}'^\times\) proving the claim.
The final claim is a direct consequence of the Chebotarëv Density Theorem.
\end{proof}

\begin{definition}
Let \(\roi\subset A\) be an \(R_S\)-order.
The  \textit{ray class group of \(K\) associated to \(\roi\)} is the class group
\(K_A^\times\backslash \widehat{R}^\bullet/\nrd(\widehat{\roi}^\times).\)
\end{definition}

\section{The Adelic Approach}
Throughout this section we fix a number field \(K\) with ring of integers \(R\), and let \(A/K\) be a central simple algebra of degree \(n\).
Further we fix an eligible set of places \(S\subset \text{Pl}(K)\), and we fix an \(R_S\)-order \(\roi\subset A\).
Throughout this paper, \(\widehat{\roi}\) and \(\widehat{A}\) will refer to monoid of \(S\)-adeles of \(\roi\) and the \(S\)-adele ring of \(A\) respectively.

Let \(\mathfrak{M}\) denote the finite set of non-zero primes of \(R_S\) where \(\roi_\mathfrak{p}\) is not hereditary.
Define
\begin{align*}
\widetilde{\roi}=\sideset{}{'}\prod_{\mathfrak{p}\notin \mathfrak{M}\cup S} \roi_{\mathfrak{p}}, && \roi_\mathfrak{M}=\prod_{\mathfrak{p}\in\mathfrak{M}} \roi_{\mathfrak{p}}.
\end{align*}
This allows us to identify \(\widehat{\roi}=\widetilde{\roi}\times \roi_\mathfrak{M}\).
The goal of this section is to prove the following theorem:

\begin{theorem}\label{prop:transfer}
Let \(\roi\subset A\) be a Hermite \(R_S\)-order and let \(\mathfrak{M}\subset\text{Pl}(K)\backslash S\) be the set of non-zero primes of \(R_S\) where \(\roi_\mathfrak{p}\) is not hereditary.
Let \(\mathcal{C}=K_A^\times\backslash\widehat{R}^\bullet/\nrd(\widehat{\roi}^\times)\) be the ray class group of \(K\) associated to \(\roi\).
The reduced norm map \(\iota: \roi_\mathfrak{M}^\bullet\to \mathcal{C}\) induces a transfer homomorphism 
\[ \varphi:\roi^\bullet \to \mathcal{B}_{\roi_\mathfrak{M}}(\mathcal{C},\iota).\]
\end{theorem}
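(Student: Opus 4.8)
The plan is to define $\varphi$ adelically and reduce everything to three structural inputs already assembled: Estes' local transfer homomorphism (Lemma~\ref{lem:hereditarytransfer}) together with Lemma~\ref{lem:restricttransfer}; the left divisor homomorphism $\roi^\bullet\hookrightarrow\widehat\roi^\bullet$ (Lemma~\ref{lem:leftdiv}); and the surjectivity of the reduced norm onto the class group $\mathcal{C}$ (Lemma~\ref{lem:nrdsurjclass}). Write the image of $\alpha\in\roi^\bullet$ under $\roi^\bullet\hookrightarrow\widehat\roi^\bullet=\widetilde\roi^\bullet\times\roi_\mathfrak{M}^\bullet$ as $(\widetilde\alpha,\alpha_\mathfrak{M})$. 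On the hereditary factor I record reduced-norm data as a sequence of prime classes: let $[\fp]\in\mathcal{C}$ denote the class of a prime $\fp\notin\mathfrak{M}\cup S$ (well defined, since $\nrd(\roi_\fp^\times)=R_\fp^\times$ for hereditary $\fp$), and set $\psi(\widetilde\alpha)=\prod^\bullet_{\fp\notin\mathfrak{M}\cup S}[\fp]^{v_\fp(\nrd\widetilde\alpha)}\in\mathcal{F}(\mathcal{C})$, a finite sequence because $\nrd\widetilde\alpha\in\widetilde R^\bullet$, where $\widetilde R=\sideset{}{'}\prod_{\fp\notin\mathfrak{M}\cup S}R_\fp$. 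I then define $\varphi(\alpha)=(\psi(\widetilde\alpha),\alpha_\mathfrak{M})$. The first thing to check is that $\varphi$ lands in $\mathcal{B}_{\roi_\mathfrak{M}}(\mathcal{C},\iota)$: by construction $\sigma(\psi(\widetilde\alpha))$ is the class of $\nrd\widetilde\alpha$ in $\mathcal{C}$ and $\iota(\alpha_\mathfrak{M})$ that of $\nrd\alpha_\mathfrak{M}$, so $\sigma(\psi(\widetilde\alpha))\,\iota(\alpha_\mathfrak{M})$ is the class of $\nrd\alpha\in\nrd(A^\times)=K_A^\times$ (Hasse-Schilling-Maass), which is trivial in $\mathcal{C}$.

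Next I would verify condition (1) of Definition~\ref{def:transferhom}. The units of the block monoid are exactly the pairs $(1,u)$ with $u\in\roi_\mathfrak{M}^\times$, since $\iota$ kills $\roi_\mathfrak{M}^\times$; hence $\varphi(\alpha)$ is a unit iff $\nrd\widetilde\alpha\in\widetilde R^\times$ and $\alpha_\mathfrak{M}\in\roi_\mathfrak{M}^\times$, which by Lemma~\ref{lem:nrdatomicity} happens iff $\nrd\alpha\in R_S^\times$, that is iff $\alpha\in\roi^\times$. For the factorization $D=D^\times\varphi(\roi^\bullet)D^\times$, given $(S,\beta)\in\mathcal{B}_{\roi_\mathfrak{M}}(\mathcal{C},\iota)$ I would first realize $S$ by distinct hereditary primes (each class of $\mathcal{C}$ contains infinitely many primes by Lemma~\ref{lem:nrdsurjclass}) and, using Estes' surjectivity, build an adelic element $\widehat b\in\widehat\roi^\bullet$ with the prescribed local norm valuations and with $\mathfrak{M}$-component $\beta$. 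Its associated locally principal right ideal then has reduced-norm class $\sigma(S)\iota(\beta)=1$, which is exactly the point where the Hermite hypothesis enters, forcing the ideal to be principal and hence producing a genuine $b\in\roi^\bullet$ with $\varphi(b)=(S,\beta)$ up to units.

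The heart of the proof, and the main obstacle, is condition (2). Suppose $\varphi(\alpha)=(S_1,\beta_1)(S_2,\beta_2)$. The non-hereditary factor splits honestly, since $\alpha_\mathfrak{M}=\beta_1\beta_2$ in $\roi_\mathfrak{M}^\bullet$. On the hereditary factor $\psi(\widetilde\alpha)=S_1S_2$, and choosing for each prime a splitting of $v_\fp(\nrd\widetilde\alpha)$ compatible with the partition of the letters of $S_1S_2$ between $S_1$ and $S_2$, the transfer homomorphism $\nrd\colon\widetilde\roi^\bullet\to\widetilde R^\bullet$ (Lemmas~\ref{lem:hereditarytransfer} and~\ref{lem:restricttransfer}) lifts this to a factorization $\widetilde\alpha=\widetilde a_1\widetilde a_2$ with $\psi(\widetilde a_i)=S_i$. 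Together these give an adelic factorization $\alpha=a_1a_2$ in $\widehat\roi^\bullet$ with $a_i=(\widetilde a_i,\beta_i)$, and it remains to descend this to $\roi^\bullet$.

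This descent is the crux. The left factor $a_1$ determines a locally principal right $\roi$-ideal whose reduced-norm class in $\mathcal{C}$ equals $\sigma(S_1)\iota(\beta_1)=1$; the Hermite property (every stably free locally free right module is free) then forces this ideal to be principal, so $a_1=a_1'u$ with $a_1'\in\roi^\bullet$ and $u\in\widehat\roi^\times$. Replacing $a_1$ by $a_1'$ and absorbing $u$ into the other factor, the left divisor homomorphism of Lemma~\ref{lem:leftdiv} yields $a_2'=(a_1')^{-1}\alpha\in\roi^\bullet$, and a short bookkeeping check shows $\varphi(a_1')=(S_1,\beta_1)\epsilon$ and $\varphi(a_2')=\epsilon^{-1}(S_2,\beta_2)$ for the unit $\epsilon=(1,u_\mathfrak{M}^{-1})$. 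I expect this step---turning a class-group-constrained adelic factorization into genuine elements of $\roi^\bullet$---to be where all the difficulty concentrates, and it is impossible without the Hermite hypothesis: it is precisely the triviality of $\sigma(S_1)\iota(\beta_1)$ that makes the intermediate ideal principal.
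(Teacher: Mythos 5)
Your architecture matches the paper's proof step for step: the same map (reduced-norm valuations recorded as a sequence of classes on the hereditary part, identity on the $\mathfrak{M}$-part), Hasse--Schilling--Maass to land in $\mathcal{B}_{\roi_\mathfrak{M}}(\mathcal{C},\iota)$, the Estes/restricted-product transfer homomorphism to lift factorizations adelically, and Lemma~\ref{lem:leftdiv} to descend divisibility. The one genuine gap is exactly at the step you call the crux: you assert that a locally principal right $\roi$-ideal with trivial reduced-norm class in $\mathcal{C}$ is principal ``by the Hermite property (every stably free, locally free right module is free).'' But the Hermite property only converts \emph{stably free} into free; nothing in your argument shows that trivial reduced-norm class implies stably free. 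Those two conditions are identified only by a nontrivial theorem, not by Definition~\ref{def:hermite}: for $n>2$ one needs Fr\"ohlich's theorem \cite{frohlich1975} that the reduced norm induces a bijection $A^\times\backslash\widehat{\roi}^\bullet/\widehat{\roi}^\times \to K_A^\times\backslash\widehat{R}^\bullet/\nrd(\widehat{\roi}^\times)$, and for $n=2$ (where definite orders violate the Eichler condition and Fr\"ohlich's theorem can fail) one needs the characterization that a quaternion order is Hermite if and only if $\nrd^{-1}([1]_R)=A^\times\widehat{\roi}^\times$, from \cite{smertnigvoight2019} and \cite{smertnig2013}. The paper isolates precisely this input as Lemma~\ref{lem:hermite}: if $\roi$ is Hermite and $\alpha\in\widehat{\roi}^\bullet$ satisfies $[\nrd\alpha]_R=[1]_R$, then $\alpha\epsilon\in\roi^\bullet$ for some $\epsilon\in\widehat{\roi}^\times$; its proof consists of these citations, not of a formal consequence of the definition of Hermite.

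A symptom that something is missing: for $n>2$ every $R_S$-order is Hermite by Jacobinski cancellation, so if your reasoning were complete, the descent would be trivially available for every higher-degree order, whereas in fact it carries the full weight of Fr\"ohlich's strong-approximation-based theorem. The same gap occurs twice, in your surjectivity argument and in your verification of condition (2) of Definition~\ref{def:transferhom}. Once you replace the appeal to the bare Hermite definition by the statement of Lemma~\ref{lem:hermite} (or the results cited in its proof), both descents go through and your argument coincides with the paper's.
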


The monoid \(\mathcal{B}_{\roi_\mathfrak{M}}(\mathcal{C},\iota)\) described in Theorem \ref{prop:transfer} will be referred to as the \textit{\(T\)-block monoid associated to \(\roi\).} 
If  \(\nrd(\widehat{\roi})^\times=\widehat{R}^\times\), we have \(\mathcal{C}=\text{Cl}_A(R_S)\).
If \(\roi\) is  moreover assumed to be hereditary, then \(\mathcal{B}_{\roi_\mathfrak{M}}(\mathcal{C},\iota)=\mathcal{B}(\mathcal{C})\).
A result also seen in \cite[Corollary 5.24]{smertnig16}.

For every prime \(\mathfrak{p}\notin \mathfrak{M}\cup S\) we fix a uniformizer \(\pi_\mathfrak{p}\) of \(R_\mathfrak{p}\) and let \(\mathcal{P}=\{\pi_\mathfrak{p}\mid \mathfrak{p}\notin \mathfrak{M}\cup S\}\).

\begin{lemma}\label{lem:nrddiag}
The following statements hold:
\begin{enumerate}
\item The reduced norm map gives rise to a commutative diagram of pointed sets:
\begin{equation*}
\begin{tikzcd}
\widehat{\roi}^\bullet \arrow[r,"\text{nrd}"]\arrow[d]&\widetilde{R}^\bullet \times \nrd(\roi_\mathfrak{M}^\bullet)\arrow[d]\\
\widehat{\roi}^\bullet/\widehat{\roi}^\times \arrow[r, "\text{nrd}"] & \widetilde{R}^\bullet/\widetilde{R}^\times\times \nrd(\roi_\mathfrak{M}^\bullet/\roi_\mathfrak{M}^\times).
\end{tikzcd}\label{diag:cominf}
\end{equation*}
Furthermore \(\widetilde{R}^\bullet/\widetilde{R}^\times\) is isomorphic to the free abelian monoid \(\mathcal{F}(\mathcal{P})\) and the composition \(\widetilde{\roi}^\bullet\to \mathcal{F}(\mathcal{P})\) is a surjective transfer homomorphism.

\item The reduced norm map gives rise to a commutative diagram of pointed sets:
\begin{equation*}
\begin{tikzcd}
\widehat{\roi}^\bullet \arrow[r,"\text{nrd}"]\arrow[d]&\widehat{R}^\bullet \arrow[d]\\
A^\times \backslash\widehat{\roi}^\bullet/\widehat{\roi}^\times \arrow[r, "\text{nrd}"] & K_A^\times\backslash \widehat{R}^\bullet/\nrd(\widehat{\roi}^\times),
\end{tikzcd}
\end{equation*}
sending \(\roi^\bullet\) to the trivial class of \(K_A^\times\backslash \widehat{R}^\bullet/\nrd(\widehat{\roi}^\times)\).
Furthermore \(K_A^\times\backslash \widehat{R}^\bullet/\nrd(\widehat{\roi}^\times)\) is a finite abelian group that can be realized as a ray class group over \(K\). 
In particular every class contains infinitely many primes.
\end{enumerate}

\end{lemma}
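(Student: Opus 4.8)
The plan is to treat both parts as an assembly of results already in place, rather than as genuinely new theorems: Part 1 repackages Estes' transfer homomorphism (Lemma~\ref{lem:hereditarytransfer}) together with the restricted-product construction (Lemma~\ref{lem:restricttransfer}), and Part 2 is essentially a restatement of Lemma~\ref{lem:nrdsurjclass} with the added observation that the global order lands in the trivial class. Accordingly, the only content to produce is the commutativity of the two squares, the free-monoid identification, and one application of the Hasse-Schilling-Maass theorem.

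For Part 1 I would proceed in three steps. First, under the identification $\widehat{\roi}=\widetilde{\roi}\times\roi_\mathfrak{M}$ the reduced norm is computed factorwise, so multiplicativity of $\nrd$ together with $\nrd(\widehat{\roi}^\times)\subseteq\widetilde{R}^\times\times\nrd(\roi_\mathfrak{M}^\times)$ shows that $\nrd$ descends to the quotients by the unit groups; this gives the commuting square, the top-right corner being the correct codomain because Estes' lemma gives $\nrd(\widetilde{\roi}^\bullet)=\widetilde{R}^\bullet$. Second, I would identify $\widetilde{R}^\bullet/\widetilde{R}^\times$ with $\mathcal{F}(\mathcal{P})$: each $R_\mathfrak{p}$ is a discrete valuation ring, so $R_\mathfrak{p}^\bullet/R_\mathfrak{p}^\times$ is free of rank one on the class of $\pi_\mathfrak{p}$, and passing to the restricted product yields a direct sum of copies of $\Z_{\geq 0}$, i.e.\ the free abelian monoid on $\mathcal{P}$. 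Third, for the transfer statement I would chain three maps: Estes' lemma makes each $\nrd\colon\roi_\mathfrak{p}^\bullet\to R_\mathfrak{p}^\bullet$ a surjective transfer homomorphism for $\mathfrak{p}\notin\mathfrak{M}\cup S$ (where $\roi_\mathfrak{p}$ is hereditary), Lemma~\ref{lem:restricttransfer} promotes this to a surjective transfer homomorphism $\widetilde{\roi}^\bullet\to\widetilde{R}^\bullet$, and the canonical quotient $\widetilde{R}^\bullet\to\widetilde{R}^\bullet/\widetilde{R}^\times\cong\mathcal{F}(\mathcal{P})$ is itself a surjective transfer homomorphism for a commutative cancellative monoid; since transfer homomorphisms compose, the composition $\widetilde{\roi}^\bullet\to\mathcal{F}(\mathcal{P})$ is one too.

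For Part 2, the bottom horizontal map and every assertion about $K_A^\times\backslash\widehat{R}^\bullet/\nrd(\widehat{\roi}^\times)$ being a finite abelian ray class group with infinitely many primes per class are exactly Lemma~\ref{lem:nrdsurjclass}, which I would cite verbatim. Commutativity of the square is again formal: $\nrd$ is multiplicative, carries $A^\times$ into $K_A^\times$ and $\widehat{\roi}^\times$ into $\nrd(\widehat{\roi}^\times)$, hence descends to the double cosets. For the final clause I would note that a cancellative element $a\in\roi^\bullet$ is a unit of $A$ by Lemma~\ref{lem:nrdatomicity}, so by the Hasse-Schilling-Maass theorem $\nrd(a)\in\nrd(A^\times)=K_A^\times$; its diagonal image in $\widehat{R}^\bullet$ therefore represents the trivial class, which is precisely the asserted statement that $\roi^\bullet$ maps to the trivial class.

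I do not anticipate a serious obstacle, since the theorem merely packages earlier lemmas; the points requiring care are bookkeeping ones, namely correctly identifying which unit groups appear in the two codomains and verifying from Definition~\ref{def:transferhom} that the quotient-by-units map is a transfer homomorphism and that transfer homomorphisms compose (both routine). The one mild subtlety worth keeping in view is consistency between the factorwise structure ($\widetilde{\roi}$ versus $\roi_\mathfrak{M}$) used in Part 1 and the double-coset structure used in Part 2, so that the free-monoid factor and the class-group factor later recombine correctly into the $T$-block monoid of Theorem~\ref{prop:transfer}.
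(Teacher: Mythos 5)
Your proposal is correct and follows essentially the same route as the paper: formal commutativity of both squares, the DVR/unique-factorization identification of $\widetilde{R}^\bullet/\widetilde{R}^\times$ with $\mathcal{F}(\mathcal{P})$, Estes' lemma plus Lemma~\ref{lem:restricttransfer} plus composition of surjective transfer homomorphisms for Part~1, and Lemma~\ref{lem:nrdsurjclass} together with Hasse--Schilling--Maass for Part~2. The only cosmetic difference is that you invoke Lemma~\ref{lem:nrdatomicity} and $\nrd(A^\times)=K_A^\times$ explicitly for the trivial-class claim, where the paper phrases the same argument via pointedness of the norm map.
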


\begin{proof}
The commutativity of the first diagram is direct as the norm map is multiplicative and sends units to units.
The surjectivity of the reduction maps is direct as well.
Since \(\roi_\mathfrak{p}\) is hereditary for all \(\mathfrak{p}\notin\mathfrak{M}\cup S\), the reduced norm is a surjective transfer homomorphism by Lemma \ref{lem:hereditarytransfer}, hence so is the reduced norm on \(\widetilde{\roi}^\bullet\) by Lemma \ref{lem:restricttransfer}.
To see that \(\widetilde{R}^\bullet/\widetilde{R}^\times\) carries the structure of a free abelian monoid over \(\mathcal{P}\), it suffices to remark that any \(\alpha\in \widetilde{R}^\bullet\) can be written uniquely as \(\alpha=(\mu_\mathfrak{p}\pi_\mathfrak{p}^{k_\mathfrak{p}})_\mathfrak{p}\), with \(\mu_\mathfrak{p}\in R_\mathfrak{p}^\times\), and \(k_\mathfrak{p}\in \Z_{\geq 0}\) such that \(k_\mathfrak{p}=0\) for all but finitely many \(\mathfrak{p}\).
The reduction map \(\widetilde{R}^\bullet\to \widetilde{R}^\bullet/\widetilde{R}^\times\) is a surjective transfer homomorphism, and since the composition of two surjective transfer homomorphisms is again a surjective transfer homomorphism, this proves the first statement.

The commutativity of the second diagram follows from the commutativity of the first diagram and the Hasse-Schilling-Maass norm theorem \cite[Theorem 33.15]{reinermaximal}.
The surjectivity of the lower norm map, as well as the fact that \(K_A^\times\backslash \widehat{R}^\bullet/\nrd(\widehat{\roi}^\times)\) is a finite abelian group that can be realized as a ray class group over \(K\), follow from Lemma \ref{lem:nrdsurjclass}.
Finally, to see that \(\roi^\bullet\) maps to the trivial class, we note that we may replace the instances of \(\widehat{\roi}^\bullet\) and \(\widehat{R}^\bullet\) with \(\widehat{A}^\times\) and \(\widehat{K}^\times\) respectively.
The image of \(\roi^\bullet\) lies inside \(A^\times\), hence it lies in the trivial class of \(A^\times\backslash \widehat{\roi}^\bullet /\widehat{\roi}^\times\).
As the reduced norm map is a pointed map, we conclude that \(\nrd(\roi^\bullet)\) lies in the trivial class of  \(K_A^\times\backslash \widehat{R}^\bullet/\nrd(\widehat{\roi}^\times)\).
\end{proof}

\begin{remark}\label{rem:transfertoclassgroup}
 Write \(\mathcal{C}\) for the ray class group \(K_A^\times\backslash \widehat{R}^\bullet/\nrd(\widehat{\roi}^\times)\).
Since every class of \(\mathcal{C}\) contains infinitely many primes, the reduction map \(\widetilde{R}^\bullet/\widetilde{R}^\times \to \mathcal{C}\) is surjective.
Since \(\widetilde{R}^\bullet/\widetilde{R}^\times\) is a free abelian monoid over \(\mathcal{P}\), the natural map \(\mathcal{F}(\mathcal{P})\to \mathcal{F}(\mathcal{C})\) is a surjective transfer homomorphism.
Hence the norm map induces a surjective transfer homomorphism \(\widetilde{\roi}^\bullet\to \mathcal{F}(\mathcal{C})\).
\end{remark}
 
Given an \(\alpha\in \widehat{\roi}^\bullet\), we write \([\alpha]_\roi\) for its class in \(\text{Cls}(\roi)=A^\times \backslash\widehat{\roi}^\bullet/\widehat{\roi}^\times\).
Similary for any \(a\in \widehat{R}^\bullet\) we write \([a]_R\) for its class inside \(\mathcal{C}\).
 
\begin{lemma}\label{lem:hermite}
Assume that \(\roi\) is Hermite.
Then for every \(\alpha\in \widehat{\roi}^\bullet\) such that \([\nrd\alpha]_R=[1]_R\), there exists an \(\epsilon\in\widehat{\roi}^\times\) such that \(\alpha\epsilon\in \roi^\bullet\).
\end{lemma}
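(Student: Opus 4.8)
The plan is to read the conclusion as saying that the reduced norm has trivial fibre over the identity on class sets, and then to feed a stably free module into the Hermite hypothesis. For the reformulation, recall that the proof of Lemma~\ref{lem:leftdiv} rests on the local identity $\roi_\fp^\bullet \cap A^\times = \roi_{(\fp)}^\bullet$; intersecting over all $\fp \notin S$ gives $\widehat{\roi}^\bullet \cap A^\times = \roi^\bullet$. Hence the existence of $\epsilon \in \widehat{\roi}^\times$ with $\alpha\epsilon \in \roi^\bullet$ is equivalent to $\alpha \in A^\times\widehat{\roi}^\times$, that is, to $[\alpha]_\roi = [1]_\roi$ in $\mathrm{Cls}(\roi) = A^\times\backslash\widehat{\roi}^\bullet/\widehat{\roi}^\times$. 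So the lemma asserts exactly that the fibre over $[1]_R$ of the induced reduced norm $\mathrm{Cls}(\roi) \to \mathcal{C}$ of Lemma~\ref{lem:nrddiag} is the single trivial class.

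Next I would normalise the reduced norm. By the Hasse--Schilling--Maass theorem $\nrd(A^\times) = K_A^\times$, so the hypothesis $[\nrd\alpha]_R = [1]_R$ lets me write $\nrd\alpha = \nrd(a)\,\nrd(\epsilon_0)$ with $a \in A^\times$ and $\epsilon_0 \in \widehat{\roi}^\times$. Replacing $\alpha$ by $a^{-1}\alpha\epsilon_0^{-1}$ leaves the class $[\alpha]_\roi$ unchanged, so I may assume $\nrd\alpha = 1$. Let $I$ be the locally free rank-one right $\roi$-module with $I_\fp = \alpha_\fp\roi_\fp$; then $[\alpha]_\roi = [1]_\roi$ holds precisely when $I$ is free, and it suffices to prove that $I$ is \emph{stably free}, for then the Hermite hypothesis makes $I$ free at once.

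The crux is to produce the stable isomorphism $I \oplus \roi \cong \roi^2$. Viewing $I \oplus \roi$ through the Morita equivalence as a locally free rank-one right ideal of the matrix order $M_2(\roi) \subset M_2(A)$, it is represented adelically by $\mathrm{diag}(\alpha,1)$, whose reduced norm in $M_2(A)$ equals $\nrd\alpha = 1$. The decisive point is that $M_2(A)$ satisfies the Eichler condition irrespective of $A$: at every archimedean place $M_2(A)_v$ is an $M_{\ge 2}$ over a division algebra, hence not itself a division ring, and this isotropy place lies in the eligible set $S$. Strong approximation for the simply connected norm-one group of $M_2(A)$ then applies, and the vanishing of the reduced norm forces $\mathrm{diag}(\alpha,1)$ into $M_2(A)^\times\,\mathrm{GL}_2(\widehat{\roi})$, i.e. into the trivial class of $\mathrm{Cls}(M_2(\roi))$. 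This yields $I \oplus \roi \cong \roi^2$, so $I$ is stably free, and Hermiteness gives $[\alpha]_\roi = [1]_\roi$, whence the required $\epsilon$.

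I expect the main obstacle to be exactly this last passage. The map $\mathrm{Cls}(\roi) \to \mathcal{C}$ need not be injective over the identity --- this is precisely where the non-Eichler, totally definite quaternion case differs --- and the entire role of the Hermite hypothesis is to collapse that fibre. Making the strong-approximation step airtight (pinning down the isotropic place inside $S$, and matching the Morita picture of rank-two $\roi$-lattices with rank-one $M_2(\roi)$-lattices compatibly with reduced norms) is where the genuine work lies; once $I$ is known to be stably free, the remainder is formal.
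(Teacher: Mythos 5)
Your argument is correct, but it takes a genuinely different route from the paper's own proof. You and the paper share the opening move: the conclusion is equivalent to $[\alpha]_\roi=[1]_\roi$, i.e.\ to triviality of the fibre of $\mathrm{Cls}(\roi)\to\mathcal{C}$ over $[1]_R$. From there the paper splits into cases and cites: for $n=2$ it invokes \cite[Proposition 4.4]{smertnigvoight2019} together with \cite[Proposition 6.3]{smertnig2013}, which assert exactly that $\roi$ is Hermite if and only if $\nrd^{-1}([1]_R)=[1]_\roi=A^\times\widehat{\roi}^\times$; for $n>2$ it invokes Fr\"ohlich's theorem \cite[Theorem 2]{frohlich1975} that the norm map of Lemma~\ref{lem:nrdsurjclass} is a bijection (the Eichler condition being automatic in degree $>2$, where Hermite is also automatic). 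You instead prove fibre-triviality uniformly in $n$: you realize $I\oplus\roi$ as a rank-one class of $M_2(\roi)$ via Morita equivalence, observe that $M_2(A)$ always satisfies the Eichler condition relative to the eligible set $S$ (an archimedean $M_2(A)_v$ is never a division ring), apply strong approximation to $\mathrm{SL}_1(M_2(A))$ to conclude that $I$ is stably free, and only then feed this into the Hermite hypothesis. In effect you re-derive the substance of the paper's citations --- the identification of the trivial-norm fibre with the stably free classes is precisely what underlies both Fr\"ohlich's theorem and the quaternionic characterization --- so your proof is longer but self-contained and case-free, with only standard external inputs (Hasse--Schilling--Maass, Morita invariance of class sets, strong approximation). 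Two small points to tidy: the normalization $\alpha\mapsto a^{-1}\alpha\epsilon_0^{-1}$ takes you out of $\widehat{\roi}^\bullet$ into $\widehat{A}^\times$, so $I$ becomes a fractional rather than integral locally principal ideal; this is harmless because $\mathrm{Cls}(\roi)=A^\times\backslash\widehat{A}^\times/\widehat{\roi}^\times$, but it should be said. Also note that the paper's $n=2$ citation gives an equivalence, whereas your argument (and the lemma) only needs the implication from Hermite, which is all you prove.
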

\begin{proof}
Assume first that \(n=2\).
By \cite[Proposition 4.4]{smertnigvoight2019}, and \cite[Proposition 6.3]{smertnig2013}, \(\roi\) is Hermite if and only if \(\nrd^{-1}([1]_R)=[1]_\roi=A^\times\widehat{\roi}^\times\). 
Hence any \(\alpha\in \widehat{\roi}^\bullet\) satisfies \([\nrd \alpha]_R=[1]_R\), if and only if \(\alpha=a\epsilon\) for some \(a\in A^\times\) and \(\epsilon \in \widehat{\roi}^\times\).
As \(a=\alpha\epsilon^{-1}\in \widehat{\roi}^\bullet \cap A^\times=\roi^\bullet\) this proves the first statement.
If \(n>2\) then the norm map described in Lemma \ref{lem:nrdsurjclass} is a bijection by \cite[Theorem 2]{frohlich1975} and the result follows analogously.
\end{proof}

Given Lemma \ref{lem:hermite} we are now ready prove Theorem \ref{prop:transfer}.

\begin{proof}[Proof of Theorem \ref{prop:transfer}]
Let \(\mathfrak{M}\) denote the finite set of non-zero primes of \(R_S\) where \(\roi\) is not hereditary.
Let \(\mathcal{P}=\{\pi_\mathfrak{p}\mid \mathfrak{p}\notin \mathfrak{M}\cup S\}\) be a set of uniformizers for the maximal ideals of \(R_\mathfrak{p}\) for all \(\mathfrak{p}\notin \mathfrak{M}\cup S\).
Let \(\mathcal{C}\) be the ray class group of \(K\) associated to \(\roi\), and identify \(\widehat{\roi}=\widetilde{\roi}\times \roi_\mathfrak{M}\).
By Remark \ref{rem:transfertoclassgroup} there exists a transfer homomorphism \(\widetilde{\roi}^\bullet\to \mathcal{F}(\mathcal{C})\).
Extending this with the identity on \(\roi_\mathfrak{M}^\bullet\) we find a transfer homomorphism \[\phi:\widehat{\roi}^\bullet\to \mathcal{F}(\mathcal{C})\times \roi_\mathfrak{M}^\bullet.\]
Notice that any \(\epsilon=(\tilde{\epsilon},\epsilon_\mathfrak{M})\in \widehat{\roi}^\times=\widetilde{\roi}^\times\times \roi_\mathfrak{M}^\times\) maps to \((1,\epsilon_{\mathfrak{M}})\) under this map.
Let \(\iota: \roi_\mathfrak{M}^\bullet \to \mathcal{C}\) be the norm map and define \(\mathcal{B}=\mathcal{B}_{\roi_\mathfrak{M}}(\mathcal{C},\iota)\).
Its unit group is given by \(\mathcal{B}^\times=\{1\}\times \roi_\mathfrak{M}^\times\), and by Lemma \ref{lem:nrddiag} the restriction of \(\phi\) to \(\roi^\bullet\) maps into \(\mathcal{B}\).
We denote \(\varphi\) for this restriction and we claim that it is a transfer homomorphism.
Let \(\alpha\in \widehat{\roi}^\bullet\).
By construction we have \(\phi(\alpha)\in \mathcal{B}\) if and only if \([\nrd \alpha]_R=[1]_R\),
hence by Lemma \ref{lem:hermite} we must have \(\alpha=a\epsilon\) for some \(a\in \roi^\bullet\) and \(\epsilon\in \widehat{\roi}^\times\).
Writing \(\epsilon=(\tilde{\epsilon},\epsilon_\mathfrak{M})\) it follows that \(\phi(\alpha)=\varphi(a)\cdot(1,\epsilon_\mathfrak{M})\) and hence that \(\mathcal{B}=\varphi(\roi^\bullet)\mathcal{B}^\times\).
Since \(\phi\) is a surjective transfer homomorphism we also have
\[\varphi^{-1}(\mathcal{B}^\times)\subset \roi^\bullet \cap \phi^{-1}(\{1\}\times \roi_\mathfrak{M}^\times)=\roi^\bullet \cap \widehat{\roi}^\times =\roi^\times.\]
In order to complete the proof let \(a\in \roi^\bullet\) be given and assume \(\varphi(a)\) factors as \(\varphi(a)=(T_1,\alpha_{\mathfrak{M}})\cdot (T_2,\beta_\mathfrak{M})\).
We wish to show that \(a\) has a left divisor \(a_1\in \roi^\bullet\) such that \(\varphi(a_1)=(T_1,\alpha_\mathfrak{M}\epsilon)\) for some \(\epsilon\in \roi_\mathfrak{M}^\times\).
Write \(a=(\tilde{a},\alpha_\mathfrak{M}\beta_\mathfrak{M})\), with \(\tilde{a}=(a_\mathfrak{p})_{\mathfrak{p}}\).
For every \(\mathfrak{p}\notin \mathfrak{M}\cup S\), we fix a factorization \(a_\mathfrak{p}=\gamma_\mathfrak{p}u_{\mathfrak{p},1}u_{\mathfrak{p},2}\cdots u_{\mathfrak{p},k_{\mathfrak{p}}}\), with \(\gamma_\mathfrak{p}\in \roi_\mathfrak{p}^\times\) and \(u_{\mathfrak{p},i}\in \mathcal{A}(\roi_\mathfrak{p}^\bullet)\).
Then 
\[\varphi(a)=\left(\sideset{}{^\bullet}\prod_{\mathfrak{p}\notin\mathfrak{M}\cup S}[\pi_\mathfrak{p}]_R^{k_\mathfrak{p}}\right)\times \alpha_\mathfrak{M}\beta_\mathfrak{M},\]
and we note that 
\[T_1=\left(\sideset{}{^\bullet}\prod_{\mathfrak{p}\notin \mathfrak{M}\cup S}[\pi_\mathfrak{p}]_R^{d_\mathfrak{p}}\right), d_\mathfrak{p}\leq k_\mathfrak{p}.\]
Let \(\tilde{\alpha}\in\widetilde{\roi}^\bullet\) be defined by
\begin{align*}
\alpha_\mathfrak{p}=
\begin{cases}
\gamma_\mathfrak{p}u_{\mathfrak{p},1}u_{\mathfrak{p},2}\cdots u_{\mathfrak{p},d_\mathfrak{p}}, &\text{if } d_\mathfrak{p}>0,\\
\gamma_\mathfrak{p},&\text{if } d_\mathfrak{p}=0,
\end{cases}
\end{align*}
for all \(\mathfrak{p}\notin \mathfrak{M}\cup S\), and let \(\alpha=(\tilde{\alpha},\alpha_\mathfrak{M})\in \widehat{\roi}^\bullet\).
It is a left divisor of \(a\) in \(\widehat{\roi}^\bullet\) such that \(\phi(\alpha)=(T_1,\alpha_\mathfrak{M})\).
By Lemma \ref{lem:hermite} there exists an \(\epsilon\in \widehat{\roi}^\times\) such that \(a_1=\alpha\epsilon\in \roi^\bullet\).
By construction \(a_1\) is a left divisor of \(a\) in \(\widehat{\roi}^\bullet\) and hence in \(\roi^\bullet\) by Lemma \ref{lem:leftdiv}.
Moreover we have \(\varphi(a_1)=\phi(\alpha)\cdot (1,\epsilon_\mathfrak{M})=(T_1,\alpha_\mathfrak{M}\epsilon_\mathfrak{M})\) as desired. 
It follows that \(\varphi\) is a transfer homomorphism.
\end{proof}

\section{Tiled Orders}
In this section we fix a non archimedean prime \(\mathfrak{p}\) of the number field \(K\), and we let \(A_\mathfrak{p}\cong M_n(D_\mathfrak{p})\) be a central simple algebra over \(K_\mathfrak{p}\) of degree \(nd\), where \(d\) denotes the degree of the \(K_\mathfrak{p}\)-central division algebra \(D_\mathfrak{p}\).
We let \(\Delta_\mathfrak{p}\) denote the unique maximal order of \(D_\mathfrak{p}\) with maximal ideal \(\mathfrak{P}\).
Further we fix uniformizers \(\pi\) and \(\api\) for \(R_\mathfrak{p}\) and \(\Delta_\mathfrak{p}\) respectively.
Finally \(w_\mathfrak{p}\) denotes the valuation on \(D_\mathfrak{p}\).

Let \(\tilde{n}=(n_1,...,n_r)\) be a partition of \(n\) and let \(M=(m_{ij})_{i,j}\in M_r(\Z)\) be a matrix. Using the identification in \eqref{eq:vectordecomp} we define the set
\begin{align}
\roi(\tilde{n},M)=\bigoplus_{i,j=1}^r M_{n_i\times n_j}(\mathfrak{P}^{m_{ij}})\subset M_n(D_\mathfrak{p}),
\end{align}
and note that \(\roi(\tilde{n},M)\) is an order if and only if \(m_{ii}=0\) for all \(i\in\{1,...,r\}\), and \(m_{ij}+m_{jk}\geq m_{ik}\) for all \(i,j,k\in\{1,...,r\}\).
We introduce the notions of tiled local orders (sometimes referred to as graduated orders), and orders in standard form, following \cite{plesken}.

\begin{definition}\label{def:tiledorder}
A local order \(\roi\subset M_n(D_\mathfrak{p})\) is \textit{tiled} if it contains a conjugate of the ring \(\text{Diag}(\Delta_\mathfrak{p},...,\Delta_\mathfrak{p})\).
A tiled order \(\roi\) is said to be in \textit{standard form} if there exists a partition \(\tilde{n}=(n_1,...,n_r)\) of \(n\), and a matrix \(M=(m_{ij})\in M_r(\Z)\), such that \(\roi=\roi(\tilde{n},M)\) and
\begin{enumerate}
\item \(m_{ij}+m_{jk}\geq m_{ik}\),
\item \(m_{ii}=0\),
\item \(m_{ij}+m_{ji}>0\) for all \(i\neq j\).
\end{enumerate}
An order in standard form comes equipped with \textit{structural invariants} \(m_{ijk}=m_{ij}+m_{jk}-m_{ik}\geq 0\) for all triples \(i,j,k\in\{1,...,n\}\).
\end{definition}

Tiled orders may be viewed as the higher dimensional analogue of Eichler orders in \(M_2(K_\mathfrak{p})\).
They have seen some study in recent years \cite{babei2020,babei2019,Shemanske2010}.
If \(n=1\) then \(A_\mathfrak{p}=D_\mathfrak{p}\) is a division ring.
In this case, a local order is tiled if and only if it is the unique maximal order \(\Delta_\mathfrak{p}\).
Let \(\roi=\roi(\tilde{n},M)\) be an order in standard form with \(\tilde{n}=(n_1,...,n_r)\).
The full symmetric group \(S_r\) acts on the coefficients of \(\tilde{n}\), and gives rise to a set of orders in standard form isomorphic to \(\roi\);
For any \(\sigma\in S_r\) we define \(\roi_\sigma=\roi(\tilde{n}',M')\), where \(\tilde{n}'=(n_{\sigma(1)},...,n_{\sigma(r)})\), and \(M'_{ij}=m_{\sigma(i)\sigma(j)}\).
This order will be referred to as the \textit{order derived from} \(\roi\) \textit{under} \(\sigma\). 

\begin{proposition}[{\cite[Proposition II.6]{plesken}}]\label{lem:standardformiso}
Any tiled local order is isomorphic to an order in standard form.
Let \(\roi=\roi(\tilde{n},M)\) and \(\roi(\tilde{n}',M')\) be two orders in standard form with \(\tilde{n}=(n_1,...,n_r)\), \(\tilde{n}'=(n'_1,....,n'_s)\), and structural invariants \(m_{ijk}\), and \(m'_{ijk}\) respectively.
Then \(\roi\) and \(\roi'\) are isomorphic if and only if \(r=s\), and there exists a permutation \(\sigma\in S_r\) such that \(n'_i=n_{\sigma(i)}\) for all \(i\in\{1,...,r\}\), and \(m'_{ijk}=m_{\sigma(i)\sigma(j)\sigma(k)}\) for all \(i,j,k\in\{1,...,r\}\).
\end{proposition}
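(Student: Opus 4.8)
The plan is to establish the two assertions in turn --- that every tiled local order is conjugate to one in standard form, and the classification of standard-form orders up to isomorphism --- using two elementary features of the ambient algebra. First, the full $R_\mathfrak{p}$-lattice two-sided $\Delta_\mathfrak{p}$-submodules of $D_\mathfrak{p}$ are exactly the fractional ideals $\mathfrak{P}^m=\api^m\Delta_\mathfrak{p}$ with $m\in\Z$, since every two-sided $\Delta_\mathfrak{p}$-ideal is a power of the principal ideal $\mathfrak{P}$. Second, conjugating $\roi(\tilde n,M)$ by $\text{Diag}(\api^{a_1},\dots,\api^{a_n})$ sends the $(i,j)$-block $\mathfrak{P}^{m_{ij}}$ to $\mathfrak{P}^{m_{ij}+a_i-a_j}$ because $w_\mathfrak{p}(\api)=1$; thus the exponent matrix is well defined only up to the gauge move $m_{ij}\mapsto m_{ij}+a_i-a_j$, and the structural invariants $m_{ijk}=m_{ij}+m_{jk}-m_{ik}$ are precisely the gauge-invariant combinations. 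A short computation fixing a base index shows, conversely, that two zero-diagonal matrices with the same structural invariants are gauge-equivalent.

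For existence I would conjugate so that $\roi$ contains $\text{Diag}(\Delta_\mathfrak{p},\dots,\Delta_\mathfrak{p})$. Peirce-decomposing against the idempotents $e_{ii}$ writes $\roi=\bigoplus_{i,j}e_{ii}\roi e_{jj}$, and each component is a full two-sided $\Delta_\mathfrak{p}$-lattice in $D_\mathfrak{p}$, hence equals $\mathfrak{P}^{m_{ij}}$; closure under multiplication gives $m_{ij}+m_{jk}\ge m_{ik}$, and $e_{ii}\roi e_{ii}=\Delta_\mathfrak{p}$ gives $m_{ii}=0$, so $\roi\cong\roi((1,\dots,1),M)$. To reach standard form I declare $i\sim j$ when $m_{ij}+m_{ji}=0$; the triangle inequality together with $m_{ij}+m_{ji}\ge m_{ii}=0$ shows this is an equivalence relation on which the exponents are additive. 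Permuting indices to make the classes contiguous and then gauging sets all intra-class exponents to $0$, turning each class into a full matrix block $M_{n_c}(\Delta_\mathfrak{p})$; the surviving cross-class exponents are independent of representatives, and distinct classes satisfy $m_{cd}+m_{dc}>0$ by construction, which is exactly Definition~\ref{def:tiledorder}(3).

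For the classification, the ``if'' direction is straightforward: a permutation $\sigma$ matching the block sizes is realized by conjugating with the corresponding block-permutation matrix, while equality of structural invariants makes the two exponent matrices gauge-equivalent, hence conjugate by a diagonal unit, and composing yields $\roi\cong\roi'$. The hard direction, and the main obstacle, is that an abstract ring isomorphism $f\colon\roi\to\roi'$ must force the data to agree, and here I would extract the invariants intrinsically. Taking radicals, $f$ induces $\roi/\text{rad}(\roi)\cong\roi'/\text{rad}(\roi')$; condition~(3) guarantees the off-diagonal Peirce components lie in the radical, so $\roi/\text{rad}(\roi)\cong\prod_{i=1}^r M_{n_i}(\Delta_\mathfrak{p}/\mathfrak{P})$ with $\Delta_\mathfrak{p}/\mathfrak{P}$ a finite field by Wedderburn's little theorem. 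Wedderburn--Artin then recovers $r$ and the multiset $\{n_1,\dots,n_r\}$, producing a permutation $\sigma$ with $n'_i=n_{\sigma(i)}$.

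Finally I would recover the structural invariants. Since $\roi'$ is semiperfect (module-finite over the complete local ring $R_\mathfrak{p}$), after adjusting $f$ by an inner automorphism of $\roi'$ I may assume it carries the block idempotent $e_{\sigma(i)}$ of $\roi$ to the block idempotent $e'_i$ of $\roi'$, so $f$ restricts to multiplication-compatible isomorphisms of Peirce components $e_{\sigma(i)}\roi e_{\sigma(j)}\to e'_i\roi'e'_j$. The crucial identity is that $m_{ijk}$ is encoded as a length: the cokernel of the multiplication map $e_i\roi e_j\otimes_{e_j\roi e_j}e_j\roi e_k\to e_i\roi e_k$ equals $M_{n_i\times n_k}(\mathfrak{P}^{m_{ik}}/\mathfrak{P}^{m_{ij}+m_{jk}})$, of length $n_in_k\,m_{ijk}$. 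As $f$ preserves these cokernels and the block sizes are already known, cancelling $n_{\sigma(i)}n_{\sigma(k)}=n'_in'_k$ gives $m'_{ijk}=m_{\sigma(i)\sigma(j)\sigma(k)}$, completing the proof. The genuinely delicate steps are the simultaneous conjugacy of the two idempotent families, resting on semiperfectness, and verifying that the multiplication map surjects onto $M_{n_i\times n_k}(\mathfrak{P}^{m_{ij}+m_{jk}})$, which uses $n_j\ge 1$ so that matrix multiplication reaches every entry.
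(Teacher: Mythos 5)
Your proposal is correct, but note that it proves strictly more than the paper does: the paper's own proof only treats the first assertion (existence of a standard form) and defers the entire isomorphism classification to Plesken with the sentence ``The second part is proven in [Plesken, Proposition II.6].'' On the existence part your route and the paper's are essentially the same --- both conjugate \(\roi\) so that it contains \(\text{Diag}(\Delta_\mathfrak{p},\dots,\Delta_\mathfrak{p})\), identify the Peirce components as powers \(\mathfrak{P}^{m_{ij}}\), and normalize by diagonal \(\api\)-power conjugations --- except that the paper eliminates indices with \(m_{ij}+m_{ji}=0\) one at a time by induction on the partition, whereas you handle them all at once via the equivalence relation \(i\sim j\Leftrightarrow m_{ij}+m_{ji}=0\) and the gauge move \(m_{ij}\mapsto m_{ij}+a_i-a_j\); the bookkeeping differs, the idea does not. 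The genuinely different content is your self-contained proof of the classification. The ``if'' direction via the cocycle argument (\(c_{ij}=m'_{ij}-m_{ij}\) satisfies \(c_{ij}+c_{jk}=c_{ik}\) and \(c_{ii}=0\), hence \(c_{ij}=a_j-a_i\)) is correct, and the ``only if'' direction is a sound chain: the identification \(\text{rad}(\roi)=\roi(\tilde n,I_r+M)\), valid by condition (3) of Definition~\ref{def:tiledorder} and quoted by the paper itself from Plesken in the proof of Proposition~\ref{prop:invertible}; Wedderburn--Artin on \(\roi/\text{rad}(\roi)\) to recover \(r\) and the block sizes; simultaneous conjugation of the two families of block idempotents using semiperfectness of \(\roi'\); and finally your key observation that \(n_in_k\,m_{ijk}\) is the length (equivalently, \(\log_{|\Delta_\mathfrak{p}/\mathfrak{P}|}\) of the cardinality) of the cokernel of the Peirce multiplication map, a quantity manifestly preserved by any ring isomorphism matching the idempotents --- here your remark that surjectivity onto \(M_{n_i\times n_k}(\mathfrak{P}^{m_{ij}+m_{jk}})\) needs \(n_j\geq 1\) is exactly the right point to flag. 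What your approach buys is independence from the external reference together with an intrinsic, gauge-invariant interpretation of the structural invariants; what the paper's choice buys is brevity, since the hard direction is precisely the part it outsources.
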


\begin{proof}
Let \(\roi\) be a tiled order.
After conjugation we may assume that \(\roi\) contains \(\text{Diag}(\Delta_\mathfrak{p},...,\Delta_\mathfrak{p})\), and hence that \(\roi =\roi((1,1,...,1),M)\) for some matrix \(M\in M_n(\Z)\).
Proceeding with induction we may assume that \(\roi=\roi(\tilde{n},M)\) with \(\tilde{n}=(n_1,...,n_{r-1},1)\), and \(m_{ij}+m_{ji}>0\) for all \(i,j\leq r-1\).
If \(m_{ir}+m_{ri}>0\) for all \(i\), we are done.
Hence assume there exists an \(i\) such that \(m_{ir}+m_{ri}=0\).
Conjugating \(\roi\) with a permutation matrix if necessary, we may assume that \(i=r-1\).
Conjugating \(\roi\) from the left  with the diagonal matrix whose \(r\)-th entry is \(\api^{-m_{ri}}\), and \(1\) elsewhere, it follows that \(m_{ir}=m_{ri}=0\), and hence that the obtained order is in standard form.
The second part is proven in \cite[Proposition II.6]{plesken}. 
\end{proof}

A well-known application of orders in standard form is given in \cite[Theorem 39.14]{reinermaximal}, where it is shown that an order \(\roi\subset M_m(D_\mathfrak{p})\) is hereditary, if and only if it is isomorphic to an order \(\roi'=\roi(\tilde{n},M)\) in standard form, such that \(m_{ij}=0\), if \(i\geq j\), and \(m_{ij}=1\), if \(i<j\).
In light of Proposition \ref{lem:standardformiso} we provide another classification of local hereditary orders.

\begin{corollary}\label{cor:standardformhereditary}
Let \(\roi=\roi(\tilde{n},M)\) be an order in standard form.
Then \(\roi\) is hereditary if and only if \(m_{ij}+m_{ji}=1\) for all pairs \(i\neq j\).
\end{corollary}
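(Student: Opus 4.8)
The plan is to combine the classification of hereditary tiled orders recalled just above the corollary (\cite[Theorem 39.14]{reinermaximal}) with the isomorphism criterion of Proposition \ref{lem:standardformiso}, exploiting that the quantity $m_{ij}+m_{ji}$ is itself a structural invariant. Indeed, since $m_{ii}=0$, for every pair $i\neq j$ we have $m_{iji}=m_{ij}+m_{ji}-m_{ii}=m_{ij}+m_{ji}$, so the condition ``$m_{ij}+m_{ji}=1$ for all $i\neq j$'' reads ``$m_{iji}=1$ for all $i\neq j$'' and is therefore preserved under the relabelling by a permutation $\sigma\in S_r$ that governs isomorphism in Proposition \ref{lem:standardformiso}.

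For the forward implication, suppose $\roi$ is hereditary. By \cite[Theorem 39.14]{reinermaximal} it is isomorphic to a standard form order $\roi'=\roi(\tilde n',M')$ with $m'_{ij}=0$ for $i\geq j$ and $m'_{ij}=1$ for $i<j$. A direct inspection gives $m'_{ij}+m'_{ji}=1$, hence $m'_{iji}=1$, for every pair $i\neq j$. Proposition \ref{lem:standardformiso} then supplies a permutation $\sigma$ with $m'_{ijk}=m_{\sigma(i)\sigma(j)\sigma(k)}$ for all triples; applying this to the triples $(i,j,i)$ gives $m_{\sigma(i)\sigma(j)}+m_{\sigma(j)\sigma(i)}=1$, and since $\sigma$ is a bijection the pair $(\sigma(i),\sigma(j))$ ranges over all ordered pairs of distinct indices, yielding $m_{kl}+m_{lk}=1$ for all $k\neq l$.

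For the converse, assume $m_{ij}+m_{ji}=1$ for all $i\neq j$. As the $m_{ij}$ are nonnegative integers, for each pair exactly one of $m_{ij},m_{ji}$ equals $1$ and the other $0$; this defines a tournament on $\{1,\dots,r\}$ by declaring $i\to j$ whenever $m_{ij}=1$. I will show this tournament is transitive, which is the crux of the argument. If $m_{ij}=m_{jk}=1$ but, for contradiction, $m_{ik}=0$, then $m_{kj}=0$ follows from $m_{jk}+m_{kj}=1$, and the structural invariant of the triple $(i,k,j)$ becomes $m_{ikj}=m_{ik}+m_{kj}-m_{ij}=0+0-1=-1<0$, contradicting $m_{ikj}\geq 0$. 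Hence $m_{ik}=1$, so the tournament is transitive and, being a tournament on a finite set that is irreflexive (as $m_{ii}=0$), it is a strict linear order. Relabelling the blocks along this order by a suitable permutation $\sigma$, the derived order $\roi_\sigma=\roi(\tilde n',M')$ has $M'_{ij}=m_{\sigma(i)\sigma(j)}$ equal to $1$ exactly when $i<j$ and $0$ otherwise, which is precisely the canonical hereditary standard form of \cite[Theorem 39.14]{reinermaximal}. Since $\roi_\sigma$ is isomorphic to $\roi$, the order $\roi$ is hereditary.

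The main obstacle is the transitivity of the tournament in the converse direction; once it is established, the linear ordering of the blocks and the identification with Reiner's canonical hereditary form are immediate. The only point requiring care is the bookkeeping of indices in the structural invariants (in particular tracking which of $m_{kj},m_{jk}$ vanishes), but no computation beyond the single inequality $m_{ikj}\geq 0$ is needed.
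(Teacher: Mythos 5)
Your forward direction is exactly the paper's: both combine \cite[Theorem 39.14]{reinermaximal} with Proposition \ref{lem:standardformiso} applied to the triples $(i,j,i)$, so there is nothing to compare there. The converse, however, has a genuine gap at its very first step: you assert that ``the $m_{ij}$ are nonnegative integers,'' so that $m_{ij}+m_{ji}=1$ forces $\{m_{ij},m_{ji}\}=\{0,1\}$. Nonnegativity is not part of Definition \ref{def:tiledorder}: the exponent matrix is only required to lie in $M_r(\Z)$ and to satisfy $m_{ii}=0$, $m_{ij}+m_{jk}\geq m_{ik}$, and $m_{ij}+m_{ji}>0$. Negative entries are genuinely compatible with the hypothesis of the corollary: for $r=2$ take $(m_{12},m_{21})=(2,-1)$, and for $r=3$ take
\[
M=\begin{pmatrix} 0 & 1 & 2\\ 0 & 0 & 1\\ -1 & 0 & 0\end{pmatrix},
\]
both of which satisfy all standard-form conditions and $m_{ij}+m_{ji}=1$ for all $i\neq j$. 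For such $M$ your tournament is simply not defined, so the argument does not start.

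The gap is not cosmetic, because the obvious repair fails. If you instead orient $i\to j$ when $m_{ij}\geq 1$ (equivalently $m_{ji}\leq 0$), your inequality argument still proves transitivity, but relabelling along this tournament need not produce Reiner's $0/1$ matrix: in the $3\times 3$ example above the sign-tournament is already the identity ordering, yet $m_{123}=m_{12}+m_{23}-m_{13}=0$ while Reiner's canonical order has structural invariant $h_{123}=1$, so by Proposition \ref{lem:standardformiso} the identity permutation does not implement an isomorphism (one checks that the transposition swapping $1$ and $2$ does). The underlying reason is that the sign pattern of $M$, unlike the structural invariants, is not invariant under conjugation by block-diagonal matrices $\mathrm{diag}(\api^{a_1}I_{n_1},\dots,\api^{a_r}I_{n_r})$, which shift $m_{ij}\mapsto m_{ij}+a_i-a_j$. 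What is missing is precisely the normalization step that occupies most of the paper's proof: the paper conjugates (inductively) so that the relevant entries become $0$ or $1$, deriving their nonnegativity from the triangle inequality, and only then reads off Reiner's form. If you first prove that every order in standard form with $m_{ij}+m_{ji}=1$ is conjugate to one with nonnegative entries --- for instance by a potential argument, using that every cyclic sum $m_{i_1i_2}+m_{i_2i_3}+\dots+m_{i_ki_1}$ is $\geq m_{i_1i_1}=0$ to solve $a_j-a_i\leq m_{ij}$ --- then your tournament argument finishes the converse cleanly and would be an attractive, non-inductive alternative to the paper's proof; as written, the converse is unproven.
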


\begin{proof}
Aassume first that \(\roi\) is hereditary.
By  \cite[Theorem 39.14]{reinermaximal}, \(\roi\) is isomorphic to an order \(\roi'=\roi(n',M')\) such that \(m'_{ij}=0\), if \(i\geq j\), and \(m'_{ij}=1\), if \(i<j\).
Since \(\roi\) and \(\roi'\) are isomorphic, there exists a permutation \(\sigma\in S_r\) such that 
\(m_{ij}+m_{ji}=m_{iji}=m'_{\sigma(i)\sigma(j)\sigma(i)}=m'_{\sigma(i)\sigma(j)}+m'_{\sigma(j)\sigma(i)}=1\), proving the first implication.

Conversely suppose \(m_{ij}+m_{ji}=1\) for all pairs \(i\neq j\).
Proceeding with induction on \(r\), and using the fact that all isomorphisms of \(M_{n-n_r}(D_\mathfrak{p})\) are inner, we may assume that for all \(i,j\leq r-1\) we have \(m_{ij}=0\), if \(i\geq j\), and \(m_{ij}=1\), if \(i<j\).
Moreover after conjugation with an appropriate matrix we may assume that \(m_{r1}=0\) and \(m_{1r}=1\).
Using the first property of Definition \ref{def:tiledorder} we find that \(0\leq m_{r,k}\leq m_{r,k+1}\) for all \(k<r-1\), and \(m_{r,k}\leq m_{1,k}=1\) for all \(k<r\).
Hence there exists a unique \(k\leq r\) such that \(m_{rl}=0\) for all \(l\leq k\) and \(m_{rl}=1\) for all \(k<l<r\).
Let \(\sigma\in S_r\) be the permutation given by 
\[\sigma(i)=\begin{cases}
i, & \text{if } i\leq k,\\
i+1, & \text{if } k+1 \leq i \leq r-1,\\
k+1, & \text{if } i=r,
\end{cases}\]
and let \(\roi_\sigma\) be the order derived from \(\roi\) under \(\sigma\).
Then \(\roi_\sigma\) satisfies the conditions of \cite[Theorem 39.14]{reinermaximal} and is therefore hereditary.
It follows that \(\roi\) is hereditary as well.
\end{proof}

\begin{proposition}\label{prop:invertible}
Let \(\roi=\roi(\tilde{n},M)\) be a tiled order in standard form with \(\tilde{n}=(n_1,...,n_r)\).
Then a matrix \(B=[B_{n_k\times n_l}]\in\roi\) is invertible in \(\roi\) if and only if \(B_{n_k\times n_k}\) is invertible in \(M_{n_k}(\Delta_\mathfrak{p})\) for all \(k\in\{1,...,r\}\).
\end{proposition}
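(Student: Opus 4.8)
The plan is to reduce invertibility in $\roi$ to invertibility modulo the Jacobson radical, and to show that this radical forgets everything except the diagonal blocks taken modulo $\mathfrak{P}$. Concretely, I would first construct a surjective ring homomorphism
\[
\rho\colon \roi \longrightarrow \prod_{i=1}^r M_{n_i}(\Delta_\mathfrak{p}/\mathfrak{P}), \qquad B=[B_{n_k\times n_l}]\longmapsto \bigl(B_{n_i\times n_i}\bmod \mathfrak{P}\bigr)_{i=1}^r,
\]
where $\Delta_\mathfrak{p}/\mathfrak{P}$ is the (finite) residue division ring, so the codomain is semisimple. The only point to check is multiplicativity: the $(i,i)$-block of $BC$ is $\sum_k B_{n_i\times n_k}C_{n_k\times n_i}$, and for $k\neq i$ this term lies in $M_{n_i}(\mathfrak{P}^{m_{ik}+m_{ki}})\subseteq M_{n_i}(\mathfrak{P})$ by property (3) of Definition~\ref{def:tiledorder}; hence it vanishes modulo $\mathfrak{P}$ and $\rho(BC)=\rho(B)\rho(C)$. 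Set $J=\ker\rho$.

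Next I would show $J\subseteq \text{rad}(\roi)$. Since $R_\mathfrak{p}$ is a complete discrete valuation ring and $\roi$ is a module-finite $R_\mathfrak{p}$-algebra, we have $\pi\roi\subseteq\text{rad}(\roi)$ and $\text{rad}(\roi)$ is the preimage of $\text{rad}(\roi/\pi\roi)$; as $\roi/\pi\roi$ is Artinian it suffices to prove that $J$ is nilpotent modulo $\pi\roi$, i.e. $J^N\subseteq\pi\roi=\bigoplus_{i,j}M_{n_i\times n_j}(\mathfrak{P}^{m_{ij}+d})$ for some $N$ (here $\pi\Delta_\mathfrak{p}=\mathfrak{P}^d$ since $w_\mathfrak{p}(\pi)=d$). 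An entry of the $(p,q)$-block of an $N$-fold product of elements of $J$ is a sum of terms of valuation at least $W=\sum_{t=1}^N m'_{i_{t-1}i_t}$ along some walk $p=i_0,\dots,i_N=q$ in $\{1,\dots,r\}$, where $m'_{ab}=m_{ab}$ for $a\neq b$ and $m'_{aa}=1$. The triangle inequality gives $W\geq m_{pq}$, so the real content is the extra $+d$.

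The main obstacle is precisely this quantitative bound, and it rests on a combinatorial fact. Call a step $a\to b$ \emph{free} if $m'_{ab}=0$, that is $a\neq b$ and $m_{ab}=0$; every other step has weight $\geq 1$. Using property (3) together with the triangle inequality (1), I would show that free steps cannot form a closed walk: taking a closed free walk $a_0\to\cdots\to a_s=a_0$ of minimal length (so $a_0\neq a_{s-1}$), the triangle inequality gives $m_{a_0 a_{s-1}}\leq \sum_t m_{a_{t-1}a_t}=0$ while freeness of the last step gives $m_{a_{s-1}a_0}=0$, contradicting $m_{a_0 a_{s-1}}+m_{a_{s-1}a_0}>0$. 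Consequently any walk using only free steps visits distinct vertices and has length at most $r-1$, so among any $r$ consecutive steps at least one has weight $\geq 1$; hence $W\geq\lfloor N/r\rfloor$. Choosing $N\geq r\bigl(d+\max_{p,q}m_{pq}\bigr)$ forces $W\geq m_{pq}+d$ for every block, giving $J^N\subseteq\pi\roi$ and therefore $J\subseteq\text{rad}(\roi)$.

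Finally I would assemble the equivalence. For the Jacobson radical one always has $1-j\in\roi^\times$ for $j\in\text{rad}(\roi)$, so an element is a unit in $\roi$ if and only if its image is a unit in $\roi/J$: a one-sided inverse modulo $J$ becomes a genuine one after correcting by the unit $1-j$, and having both a left and a right inverse makes $B$ invertible. Since $\roi/J\cong\prod_i M_{n_i}(\Delta_\mathfrak{p}/\mathfrak{P})$, this says $B$ is invertible in $\roi$ if and only if each $B_{n_i\times n_i}\bmod\mathfrak{P}$ is invertible in $M_{n_i}(\Delta_\mathfrak{p}/\mathfrak{P})$. The latter is in turn equivalent to $B_{n_i\times n_i}$ being invertible in $M_{n_i}(\Delta_\mathfrak{p})$, because $M_{n_i}(\mathfrak{P})=\text{rad}(M_{n_i}(\Delta_\mathfrak{p}))$ and the same radical argument applies (the reasoning already used in the proof of Lemma~\ref{cor:invertnorm}). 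This yields the claim.
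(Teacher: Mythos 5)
Your strategy is sound and is essentially a self-contained version of the paper's two-line proof: the paper also reduces invertibility in $\roi$ to invertibility in $\roi/\text{rad}(\roi)\cong\prod_i M_{n_i}(\Delta_\fp/\mathfrak{P})$, but it simply cites Plesken (Remark II.4) for the identity $\text{rad}(\roi)=\roi(\tilde{n},I_r+M)$, whereas you prove the needed containment $\ker\rho\subseteq\text{rad}(\roi)$ by hand. Your construction of $\rho$, the reduction to nilpotency modulo $\pi\roi$, and the unit-lifting at the end are all correct.

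There is, however, a genuine gap in the combinatorial nilpotency estimate: it implicitly assumes all off-diagonal exponents $m_{ij}$ are non-negative. Definition~\ref{def:tiledorder} only requires $M\in M_r(\Z)$ together with the triangle inequality, $m_{ii}=0$, and $m_{ij}+m_{ji}>0$; negative entries are allowed (for instance $r=2$, $m_{12}=-1$, $m_{21}=2$ satisfies all three conditions, and the paper's own arguments, e.g.\ the proof of Proposition~\ref{prop:standardelas}, normalize by conjugation precisely for this reason). For such an order your claim that ``every other step has weight $\geq 1$'' is false (the step $1\to 2$ has weight $-1$), and even granting one step of weight $\geq 1$ among every $r$ consecutive steps, the inference $W\geq\lfloor N/r\rfloor$ needs all weights to be non-negative: in the example the walk $1\to 1\to 2$ has $m'$-weight $1+(-1)=0$. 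The gap is repairable in two ways. (i) Normalize first: conjugating $\roi$ by $\text{Diag}(\api^{m_{11}}I_{n_1},\dots,\api^{m_{1r}}I_{n_r})$ replaces $m_{ij}$ by $m_{ij}+m_{1i}-m_{1j}$, which is $\geq 0$ by the triangle inequality; this preserves the standard-form conditions and conjugates each diagonal block, so neither side of the claimed equivalence changes, and your argument then goes through verbatim. (ii) Alternatively, replace the free-walk lemma by the statement that every closed walk has $m'$-weight $\geq 1$: if it meets two distinct vertices $a\neq b$, split it into a segment from $a$ to $b$ and one from $b$ to $a$, whose $m$-weights are $\geq m_{ab}$ and $\geq m_{ba}$ by the triangle inequality, and invoke condition (3); a closed walk staying at one vertex consists of lazy steps of weight $1$. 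Then repeatedly extract closed subwalks of length $\leq r$ (pigeonhole on any $r+1$ consecutive vertices), each extraction costing weight at least $1$, until the remaining walk has length $<r$; since the remaining walk still has $m'$-weight $\geq m_{pq}$, this gives $W\geq m_{pq}+N/r-1$, so $N\geq r(d+1)$ suffices.
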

\begin{proof}
Let \(\roi=\roi(\tilde{n},M)\) be a tiled order in standard form.
Let \(B=[B_{n_k\times n_l}]\in \roi\) be a matrix, and write \(\overline{B}=[\overline{B_{n_k\times n_l}}]\) for its image in \(\roi/\text{rad}(\roi)\).
Then \(B\) is invertible in \(\roi\) if and only if \(\overline{B}\) is invertible in \(\roi/\text{rad}(\roi)\).
Since \(\text{rad}(\roi)=\roi(\tilde{n},I_r+M)\) (see \cite[Remark II.4]{plesken}), we find that \(\roi/\text{rad}(\roi)\cong\bigoplus_{k}M_{n_k}(\Delta_\mathfrak{p}/\mathfrak{P})\), and that \(\overline{B}=\bigoplus_k\overline{B_{n_k\times n_k}}\) proving the claim.
\end{proof}

Let \(\roi=\roi(\tilde{n},M)\) be an order in standard form with \(\tilde{n}=(n_1,...,n_r)\). 
For every \(l\in\{1,...,r\}\) we let \(\roi^{(l)}=\roi(\tilde{n}^{(l)},M^{(l)})\subset M_{n_1+...+n_l}(\Delta_\mathfrak{p})\) be the order in standard form given by \(\tilde{n}^{(l)}=(n_1,...,n_l)\) and \(M^{(l)}_{ij}=m_{ij}\) for all \(i,j\in\{1,...,l\}\).
Note that \(\roi^{(r)}=\roi\). 
For any pair \(k,l\in\{1,...,r\}\) with \(l\leq k\), there exists a natural injective monoid homomorphism \(\tau_{kl}:\roi^{(l)\bullet}\hookrightarrow \roi^{(k)\bullet}\) given by extending an element with the identity on the diagonal.
For any triple \(l\leq k\leq s\) we have \(\tau_{sl}=\tau_{sk}\circ \tau_{kl}\).
In general we will write \(\alpha^{(k)}\) for \(\tau_{kl}(\alpha)\) if this is clear from context.

\begin{lemma}\label{lem:atomblockreduction}
Let \(\roi=\roi(\tilde{n},M)\) be an order in standard form with \(\tilde{n}=(n_1,...,n_r)\), let \(l\in \{1,...,r\}\) and let \(\alpha\in \roi^{(l)}\).
Then \(\alpha\) is an atom in \(\roi^{(l)}\) if and only if \(\alpha^{(k)}\) is an atom in \(\roi^{(k)}\) for all \(k\geq l\). 
\end{lemma}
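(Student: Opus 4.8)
The plan is to reduce the displayed equivalence to the single step $k=l+1$ and then induct. Since $\tau_{sl}=\tau_{sk}\circ\tau_{kl}$ we have $(\alpha^{(k)})^{(k+1)}=\alpha^{(k+1)}$, so once we know that for every $k$ and every $\alpha'\in\roi^{(k)\bullet}$ the element $\alpha'$ is an atom in $\roi^{(k)}$ if and only if $(\alpha')^{(k+1)}$ is an atom in $\roi^{(k+1)}$, applying this to $\alpha'=\alpha^{(k)}$ chains the individual equivalences and gives the claim for all $k\ge l$. Note the reverse implication of the displayed statement is then immediate, since $k=l$ is among the indices and $\alpha^{(l)}=\alpha$. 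So everything rests on the one-step claim, and within that only the implication ``$\alpha$ atom $\Rightarrow\alpha^{(l+1)}$ atom'' has content: the converse follows because $\tau_{l+1,l}$ is a monoid homomorphism that, by Proposition~\ref{prop:invertible}, preserves and reflects invertibility (adjoining an identity block does not change which diagonal blocks are units), so a nontrivial factorization of $\alpha$ would push forward to one of $\alpha^{(l+1)}$.

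For the substantial implication, write $N=n_1+\dots+n_l$ and view every element of $\roi^{(l+1)}$ in the coarse $2\times 2$ block form for the partition $(N,n_{l+1})$, so that $\alpha^{(l+1)}=\left(\begin{smallmatrix}\alpha&0\\0&I\end{smallmatrix}\right)$. Given a factorization $\alpha^{(l+1)}=\beta\gamma$, I would expand the four block equations. The first key step is to show that the corner blocks $\beta_{22},\gamma_{22}\in M_{n_{l+1}}(\Delta_\mathfrak{p})$ are invertible: in the bottom-right equation $\beta_{21}\gamma_{12}+\beta_{22}\gamma_{22}=I$, every summand of $\beta_{21}\gamma_{12}$ lies in $\mathfrak{P}^{m_{l+1,i}+m_{i,l+1}}\subseteq\mathfrak{P}$ by the standard-form positivity $m_{l+1,i}+m_{i,l+1}>0$ of Definition~\ref{def:tiledorder}, whence $\beta_{22}\gamma_{22}\equiv I\pmod{\mathfrak{P}}$. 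Reducing modulo the radical $\mathfrak{P}$ and using that the matrix ring over the residue division ring $\Delta_\mathfrak{p}/\mathfrak{P}$ is Artinian (so a one-sided inverse is two-sided) shows $\bar\beta_{22},\bar\gamma_{22}$ are invertible, and invertibility lifts from the residue ring.

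The second step eliminates the off-diagonal coupling. The two mixed equations give $\beta_{12}=-\beta_{11}\gamma_{12}\gamma_{22}^{-1}$ and $\gamma_{21}=-\beta_{22}^{-1}\beta_{21}\gamma_{11}$; substituting into the top-left equation yields
\[
\alpha=\beta_{11}\bigl(I+\gamma_{12}\gamma_{22}^{-1}\beta_{22}^{-1}\beta_{21}\bigr)\gamma_{11}.
\]
Setting $E=\gamma_{12}\gamma_{22}^{-1}\beta_{22}^{-1}\beta_{21}$, the $(i,j)$ block of $E$ lies in $\mathfrak{P}^{m_{i,l+1}+m_{l+1,j}}\subseteq\mathfrak{P}^{m_{ij}}$ by the triangle inequality $m_{i,l+1}+m_{l+1,j}\ge m_{ij}$, so $I+E\in\roi^{(l)}$, and its diagonal blocks satisfy $E_{ii}\in\mathfrak{P}$ by positivity again, so $I+E$ is a unit of $\roi^{(l)}$ by Proposition~\ref{prop:invertible}. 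Since $\beta_{11},\gamma_{11}$ are the top-left corners of $\beta,\gamma$ they lie in $\roi^{(l)}$, and as $\alpha$ is cancellative, multiplicativity of $\nrd$ together with Lemma~\ref{lem:nrdatomicity} forces $\beta_{11},\gamma_{11}\in\roi^{(l)\bullet}$. Thus $\alpha=\beta_{11}\cdot\bigl((I+E)\gamma_{11}\bigr)$ is a genuine factorization in $\roi^{(l)\bullet}$; as $\alpha$ is an atom, one of $\beta_{11}$, $\gamma_{11}$ is a unit of $\roi^{(l)}$. Combined with the invertibility of the corner block $\beta_{22}$ or $\gamma_{22}$ from the first step, Proposition~\ref{prop:invertible} upgrades this to $\beta$ or $\gamma$ being a unit of $\roi^{(l+1)}$, so the factorization was trivial and $\alpha^{(l+1)}$ is an atom.

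I expect the main obstacle to be the first step, pinning down that the corner blocks $\beta_{22},\gamma_{22}$ are units: this is precisely where the standard-form hypothesis $m_{ij}+m_{ji}>0$ is used essentially, and one must argue at the residue-field level and then lift. The triangle-inequality bookkeeping making $I+E$ a unit is the other point where the order's arithmetic enters, but it is routine once the decay estimates on the off-diagonal blocks are in place.
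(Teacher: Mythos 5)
Your proposal is correct, and it follows the paper's skeleton in its two essential pillars: the reduction to the single step $l\to l+1$ by induction, and the observation that the standard-form positivity $m_{i,l+1}+m_{l+1,i}>0$ forces the corner blocks $\beta_{22},\gamma_{22}$ to be invertible (the paper cites Lemma~\ref{cor:invertnorm} here, while you argue directly modulo the radical and lift; these are equivalent). Where you genuinely diverge is in how the factorization of $\alpha$ in $\roi^{(l)}$ is extracted from $\alpha^{(l+1)}=\beta\gamma$. The paper normalizes: it left-multiplies $\gamma$ by explicit units of $\roi^{(l+1)}$ so that its last block column becomes $(0,\dots,0,I)^{T}$, deduces that $\beta$ then has the same shape, and reads off a block-triangular factorization $\alpha=\beta_{11}\gamma_{11}$. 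You instead solve the two off-diagonal block equations for $\beta_{12}$ and $\gamma_{21}$ and substitute into the top-left equation, obtaining the Schur-complement-style identity $\alpha=\beta_{11}(I+E)\gamma_{11}$ with $E=\gamma_{12}\gamma_{22}^{-1}\beta_{22}^{-1}\beta_{21}$, and then verify $I+E\in\roi^{(l)\times}$ using the triangle inequality $m_{i,l+1}+m_{l+1,j}\ge m_{ij}$ together with positivity and Proposition~\ref{prop:invertible}. Both routes finish identically, lifting the invertibility of $\beta_{11}$ or $\gamma_{11}$ to that of $\beta$ or $\gamma$ via Proposition~\ref{prop:invertible}. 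Your version avoids choosing normalizing units and makes explicit exactly where each standard-form axiom enters (the triangle inequality for $E\in\roi^{(l)}$, the positivity for $E_{ii}\in\mathfrak{P}$ and for the corner blocks); the paper's normalization keeps all manipulations inside $\roi^{(l+1)}$ and never needs the triangle inequality overtly, at the cost of a slightly longer chain of ``we may assume'' reductions. One point worth making explicit if you write this up: your appeal to cancellativity of $\beta_{11},\gamma_{11}$ via multiplicativity of the reduced norm (Lemma~\ref{lem:nrdatomicity}) is needed precisely so that $\alpha=\beta_{11}\cdot\bigl((I+E)\gamma_{11}\bigr)$ is a factorization in the monoid $\roi^{(l)\bullet}$ where the atom hypothesis applies; you did include this, and it is the only step that is easy to overlook.
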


\begin{proof}
If \(\alpha^{(k)}\) is an atom for all \(k\geq l\), then clearly \(\alpha=\alpha^{(l)}\) is an atom.
Assume that \(\alpha\) is an atom in \(\roi^{(l)}\).
By induction it suffices to prove that \(\alpha^{(l+1)}\) is an atom in \(\roi^{(l+1)}\), hence we may assume that \(l=r-1\).
Let \(\alpha\in\roi^{(r-1)}\) be an atom and assume \(\alpha^{(r)}=BC\) for certain \(B,C\in \roi^{(r)}\).
Write \(\alpha^{(r)}=[A_{n_k\times n_s}]\), \(B=[B_{n_k\times n_s}]\) and \(C=[C_{n_k\times n_s}]\).
By definition of \(\alpha^{(r)}\) it follows that 
\[I_{n_r}=B_{n_r\times n_r}C_{n_r\times n_r} +\sum_{i=1}^{r-1}B_{n_r\times n_i}C_{n_i\times n_r}.\]
As \(\roi\) is in standard form, the sum on the right hand side is contained in \(\mathfrak{P}M_{n_r}(\Delta_\mathfrak{p})\), hence \(B_{n_r\times n_r}\) and \(C_{n_r\times n_r}\) are invertible by Lemma \ref{cor:invertnorm}.
Multiplying \(C\) from the left with an appropriate unit we may assume that \(C_{n_r\times n_r}=I_{n_r}\).
Further multplying \(C\) from the left with the matrix \(\Gamma_r=[\Gamma_{n_k\times n_s}]\in \roi^{(r)\times}\), given by \(\Gamma_{n_k\times n_k}=I_{n_k}\) for all \(k\), \(\Gamma_{n_k\times n_r}=-C_{n_k\times n_r}\) for all \(k<r\), and 0 elsewhere, we may also assume that \(C_{n_k\times n_r}=0\) for all \(k<r\).
Hence we may assume that \(B_{n_k\times n_r}=A_{n_k\times n_r}=0\) if \(k<r\).
It follows that
\[\alpha^{(r)}=\left(\begin{array}{c|c}
\alpha & 0\\ \hline
0 & I_{n_r}
\end{array}\right)
=\left(\begin{array}{c|c}
\beta & 0\\ \hline
X & I_{n_r}
\end{array}\right)\cdot\left(\begin{array}{c|c}
\gamma & 0\\ \hline
Y & I_{n_r}
\end{array}\right)=BC,\]
for certain \(\beta,\gamma\in \roi^{(r-1)}\), and matrices \(X,Y\in M_{n_r\times n-n_r}(D_\mathfrak{p})\).
Consequently we find that \(\alpha=\beta\gamma\), and 
since \(\alpha\) is an atom in \(\roi^{(r-1)}\), it follows that \(\beta\) or \(\gamma\) must be invertible in \(\roi^{(l)}\).
Hence by Proposition \ref{prop:invertible} either \(B\) or \(C\) must be invertible in \(\roi^{(r)}\).
It follows that \(\alpha^{(r)}\) is an atom.
\end{proof}

\begin{proposition}\label{prop:standardelas}
Let \(\roi=\roi(\tilde{n},M)\subset M_n(D_\mathfrak{p})\) be an order in standard form.
The following are equivalent
\begin{enumerate}
\item \(\roi\) has finite elasticity,
\item \(m_{ij}+m_{ji}=1\) for all \(i\neq j\in \{1,...,r\}\),
\item \(\roi\) is hereditary.
\end{enumerate}
Moreover if any of the above statements fail to hold, there exists a set of atoms \(\{\alpha_k,\alpha'_k\mid k\in \Z_{\geq 1}\}\) of \(\roi\) satisfying
\[\alpha_k\alpha'_k=\alpha_1^k\alpha'^k_1\]
for all \(k\).
In particular \(\rho_k(\roi^\bullet)=\infty\) for all \(k\geq 2\).
And for all \(k\in \Z_{\geq 1}\) there are inclusions \(2\cdot\Z_{\geq 1}\subset \mathcal{U}_{2k}(\roi^\bullet)\), and \(1+2\cdot\Z_{\geq 1}\subset \mathcal{U}_{2k+1}(\roi^\bullet)\).
\end{proposition}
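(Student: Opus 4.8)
The equivalence of (2) and (3) is immediate from Corollary~\ref{cor:standardformhereditary}, and the implication $(3)\Rightarrow(1)$ follows from Estes' theorem (Lemma~\ref{lem:hereditarytransfer}): a hereditary local order is half-factorial, so $\rho(\roi^\bullet)=1$ is finite. It therefore remains to prove $(1)\Rightarrow(2)$, which I would obtain in contrapositive form by constructing the atom family of the \emph{Moreover} part; once an element has factorizations of length $2$ and $2k$ for every $k$, we get $\rho(\roi^\bullet)=\infty$, negating (1) and closing the cycle $(1)\Rightarrow(2)\Rightarrow(3)\Rightarrow(1)$. So the whole statement reduces to producing the atoms $\alpha_k,\alpha'_k$ with $\alpha_k\alpha'_k=\alpha_1^k\alpha_1'^k$ and then deriving the numerical consequences formally.

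First I would reduce to $r=2$. Assuming $\roi$ is not hereditary, Corollary~\ref{cor:standardformhereditary} supplies a pair $i\neq j$ with $m_{ij}+m_{ji}\geq 2$. Replacing $\roi$ by a suitable derived order $\roi_\sigma$ (Proposition~\ref{lem:standardformiso}) I may assume the bad pair is $\{1,2\}$ and pass to the leading suborder $\roi^{(2)}=\roi((n_1,n_2),M^{(2)})$ with $M^{(2)}=\left(\begin{smallmatrix}0&m_{12}\\ m_{21}&0\end{smallmatrix}\right)$ and $m_{12}+m_{21}\geq 2$. By Lemma~\ref{lem:atomblockreduction} an atom of $\roi^{(2)}$ extends under $\tau_{r2}$ to an atom of $\roi$, and since $\tau_{r2}$ is a monoid homomorphism the relation $\alpha_k\alpha'_k=\alpha_1^k\alpha_1'^k$ is preserved; moreover the atoms can be arranged to act nontrivially only on a $2\times 2$ corner indexed by one coordinate from each of the two blocks, so the computation reduces to the $1\times1$-block order $\roi=\left(\begin{smallmatrix}\Delta_\mathfrak{p}&\mathfrak{P}^{a}\\ \mathfrak{P}^{b}&\Delta_\mathfrak{p}\end{smallmatrix}\right)\subset M_2(D_\mathfrak{p})$ with $a+b\geq 2$. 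There I would exhibit two $\nrd$-valuation-one atoms $\alpha_1,\alpha_1'$ whose off-diagonal entries occupy the ``slack'' created by $a+b\geq 2$, and define $\alpha_k,\alpha'_k$ by explicitly regrouping the product $\alpha_1^k\alpha_1'^k$ into two factors; a direct matrix computation then checks membership in $\roi$ and the identity $\alpha_k\alpha'_k=\alpha_1^k\alpha_1'^k$, with $w_\mathfrak{p}(\nrd\alpha_k)$ growing linearly in $k$.

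The main obstacle is proving that the $\alpha_k$ are \emph{genuinely} atoms, since their reduced norm has large valuation and they could a priori split. The mechanism I would use is a valuation-forcing argument: if $\alpha_k=PQ$ with $Q$ a non-unit, then by Proposition~\ref{prop:invertible} some diagonal block of $Q$ is non-invertible, and a block-by-block analysis of the product shows that the constraint $m_{12}+m_{21}\geq 2$ forces both diagonal blocks of any proper divisor into $\mathfrak{P}$, making its $w_\mathfrak{p}(\nrd)$ too large -- a contradiction. Lemma~\ref{cor:invertnorm} is precisely the tool that upgrades ``invertible modulo $\mathfrak{P}$'' (or modulo a higher power) to genuine invertibility at this step. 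The delicate point, and where I expect the bulk of the work to lie, is to run this uniformly in $k$: for large $k$ one must exclude not only length-one divisors but arbitrary factorizations of the high-norm element $\alpha_k$, which I anticipate needing an inductive bookkeeping of the entry valuations along the regrouping.

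Granting the family, the remaining assertions are formal. From $\alpha_i\alpha'_i=\alpha_1^i\alpha_1'^i$ I obtain, for every $N$ and every $i\in\{1,\dots,N\}$, the contiguous substitution
\[\alpha_1^N\alpha_1'^N=\alpha_1^{N-i}\,(\alpha_i\alpha'_i)\,\alpha_1'^{N-i},\]
a factorization into $2(N-i)+2$ atoms; letting $i$ range gives $\{2,4,\dots,2N\}\subseteq L(\alpha_1^N\alpha_1'^N)$. In particular this element has a factorization of length $2$ and one of length $2N$, so $\rho_2(\roi^\bullet)\geq 2N$ for every $N$, whence $\rho_2(\roi^\bullet)=\infty$; since the refined elasticities are increasing, Lemma~\ref{lem:elaslimit} gives $\rho_k(\roi^\bullet)=\infty$ for all $k\geq 2$ and $\rho(\roi^\bullet)=\infty$, which is $(1)\Rightarrow(2)$. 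For the unions, given $k,m\geq 1$ put $N=\max(k,m)$: then $\{2k,2m\}\subseteq\{2,\dots,2N\}\subseteq L(\alpha_1^N\alpha_1'^N)$ shows $2m\in\mathcal{U}_{2k}(\roi^\bullet)$, while appending the single atom $\alpha_1$ shifts all these lengths by one, so $\{2k{+}1,2m{+}1\}\subseteq L(\alpha_1^N\alpha_1'^N\alpha_1)$ and $2m{+}1\in\mathcal{U}_{2k+1}(\roi^\bullet)$. As $m$ is arbitrary, $2\cdot\Z_{\geq 1}\subseteq\mathcal{U}_{2k}(\roi^\bullet)$ and $1+2\cdot\Z_{\geq 1}\subseteq\mathcal{U}_{2k+1}(\roi^\bullet)$.
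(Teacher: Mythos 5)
Your outer skeleton is the same as the paper's and the formal parts are fine: the equivalence $2\Leftrightarrow 3$ via Corollary~\ref{cor:standardformhereditary}, the implication $3\Rightarrow 1$ via Lemma~\ref{lem:hereditarytransfer}, the reduction to two blocks via derived orders and Lemma~\ref{lem:atomblockreduction}, and your final paragraph deriving $\rho_k(\roi^\bullet)=\infty$ and the union statements from the relation $\alpha_k\alpha'_k=\alpha_1^k\alpha_1'^k$ (this last part is actually more detailed than the paper's ``the remaining statements follow directly''). The first genuine gap is your further reduction from block sizes $(n_1,n_2)$ to the $1\times1$-block order $\left(\begin{smallmatrix}\Delta_\mathfrak{p}&\mathfrak{P}^{a}\\ \mathfrak{P}^{b}&\Delta_\mathfrak{p}\end{smallmatrix}\right)$. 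Lemma~\ref{lem:atomblockreduction} only lets you \emph{append} diagonal blocks (pass from $\roi^{(l)}$ to $\roi^{(l+1)}$); it says nothing about \emph{enlarging} blocks, i.e.\ about whether an atom of the $(1,1)$-partition corner order stays an atom after being padded by the identity inside the blocks of $\roi((n_1,n_2),M)$. The corner embedding is a monoid homomorphism, so the relation survives, but atomicity under it is precisely the hard point: a factorization $\alpha_k=BC$ in $\roi((n_1,n_2),M)$ may a priori mix the distinguished $2\times2$ corner with the identity coordinates, and excluding this is what the paper's row/column-reduction argument (using Lemma~\ref{cor:invertnorm}, Proposition~\ref{prop:invertible} and Lemma~\ref{lem:block diagonal} with respect to the refined partition $(n_1-1,2,n_2-1)$) is for. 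That is why the paper constructs $\alpha_k,\alpha'_k$ directly inside $\roi^{(2)}$, as matrices equal to the identity outside the corner in rows and columns $n_1,n_1+1$, and proves atomicity there; your ``so the computation reduces to'' assumes away the bulk of the proof.

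The second gap is that your atomicity mechanism cannot work for $k\geq2$. Since every non-unit of $\roi^\bullet$ has positive reduced-norm valuation, an element with factorization lengths $2$ and $2k$ forces one of the two atoms in its length-two factorization to have norm valuation at least $k$; indeed the paper's $\alpha_k$ has $w_\mathfrak{p}(\nrd\alpha_k)=k$. Consequently, showing that a hypothetical divisor has both diagonal blocks in $\mathfrak{P}$, hence norm valuation at least $2$, is not ``too large'' relative to $k$ and yields no contradiction: a global norm count can never rule out proper factorizations of these high-norm elements. What actually works (and what the paper does) is entry-wise and uses the specific shape of the matrices: the unit entry $1$ in position $(n_1+1,n_1)$ of $\alpha_k$ forces, after normalizing one factor by units, certain entries of $B$ and $C$ to be units, and then the constraint that the upper-right block lies in $\mathfrak{P}^{t}$ with $t\geq2$ produces a valuation contradiction in an individual entry of the product (alternatively, forces $\alpha_k$ to divide one of the factors, via Lemma~\ref{lem:block diagonal}, so that factor times $\alpha_k^{-1}$ is a unit). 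Since your proposal never writes down $\alpha_1,\alpha_1',\alpha_k,\alpha_k'$ --- and the existence of valuation-one atoms whose powers regroup as $\alpha_1^k\alpha_1'^k=\alpha_k\alpha_k'$ with $\alpha_k,\alpha_k'$ atoms \emph{is} the content of the proposition --- the heart of the proof is missing.
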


\begin{proof}
The implications \(2\Leftrightarrow 3\) are shown in Corollary \ref{cor:standardformhereditary}.
Furthermore, the implication \(3\Rightarrow 1\) follows from Lemma \ref{lem:hereditarytransfer}.
Thus it remains to show that \(1\Rightarrow 2\).
By Lemma \ref{lem:atomblockreduction} it suffices to prove that the statement holds for \(r=2\),
hence assume that \(\tilde{n}=(n_1,n_2)\) and that \(m_{12}+m_{21}=t>1\).
Without loss of generality we may assume that \(m_{21}=0\), and that \(m_{12}=t\).
We aim to construct the set of atoms as described in the proposition, and let
\begin{align*}
\alpha_k=\left(\begin{array}{cc|cc}
I_{n_1-1} & 0 & 0 & 0\\
0 & \api^{k-1}+\api^{t-1} &\api^t &0\\ \hline
0 & 1 & \api & 0\\
0 & 0 & 0 & I_{n_2-1}
\end{array}\right), && \alpha'_k= \left(\begin{array}{cc|cc}
I_{n_1-1} & 0 & 0 & 0\\
0 & \api &-\api^t &0\\ \hline
0 & -1 & \api^{t-1}+\api^{k-1} & 0\\
0 & 0 & 0 & I_{n_2-1}
\end{array}\right).
\end{align*}
Assume that \(\alpha_k= BC\) for certain matrices \(B,C\in \roi\), and write
\begin{align*}
\alpha_k=\left(\begin{array}{c|c}
A_{n_1\times n_1} & \api^t A_{n_1\times n_2} \\ \hline
A_{n_2\times n_1} & A_{n_2\times n_2}
\end{array}\right)
=\left(\begin{array}{c|c}
B_{n_1\times n_1} & \api^tB_{n_1\times n_2} \\ \hline
B_{n_2\times n_1} & B_{n_2\times n_2}
\end{array}\right)
\cdot \left(\begin{array}{c|c}
C_{n_1\times n_1} &  C_{n_1\times n_2} \api^t \\ \hline
C_{n_2\times n_1} & C_{n_2\times n_2}
\end{array}\right)
\end{align*}
Applying Lemma \ref{cor:invertnorm} to \(A_{n_2\times n_2}= B_{n_2\times n_2}C_{n_2\times n_2} +\api^t D\), we find that there exists a \(\gamma\in M_{n_2}(\Delta_\mathfrak{p})^\times\) such that \(B_{n_2\times n_2}C_{n_2\times n_2}=\gamma A_{n_2\times n_2}\).
Since \(A_{n_2\times n_2}\) is an atom in \(M_{n_2}(\Delta_\mathfrak{p})\) it follows that either \(B_{n_2\times n_2}\) or \(C_{n_2\times n_2}\) is invertible.
Let us first assume that \(C_{n_2\times n_2}\) is invertible.
We may then assume it is the identity, and using row reduction, we may also assume \(C_{n_1\times n_2}=0\).
Consequently we find that \(B_{n_i\times n_2}=A_{n_i\times n_2}\) for \(i=1,2\).
Using standard matrix notation write \(\alpha_k=(a_{i,j})_{i,j}\), \(B=(b_{i,j})_{i,j}\), and \(C=(c_{i,j})_{i,j}\). 
The equality
\[1=a_{n_1+1,n_1} =\sum_{i=1}^{n_1} b_{n_1+1,i}c_{i,n_1}+ \api c_{n_1+1,n_1}\]
shows that there exists an \(i\leq n_1\) such that \(c_{i,n_1}\) is a unit.
Multiplying \(C\) from the left with an appropriate unit we may assume that \(i=n_1\).
Proceeding with row reduction we may now assume that \(c_{l,n_1}=\delta_{l,n_1}\), where \(\delta_{i,j}\) is the Kronecker delta.
It therefore follows that \(b_{i,n_1}=a_{i,n_1}\) for \(i=1,...,n\).
But as \(\alpha_k\) is block diagonal, it now follows from Lemma \ref{lem:block diagonal} that \(\alpha_k\) is a right divisor of \(B\) in \(M_n(\Delta_\mathfrak{p})\).
As \(\alpha_k=BC\) it therefore follows that \(C\) is invertible.

Analogously we now assume that \(B_{n_2\times n_2}\) is invertible.
We may further assume it to be the identity and, using column reduction, we may assume that \(B_{n_2\times n_1}=0\).
Consequently we find that \(C_{n_2\times n_i}=A_{n_2\times n_i}\).
Looking now at \(a_{n_1,n_1+1}\) we see that
\[\api^t =\sum_{i=1}^{n_1}b_{n_1,i} c_{i,n_1+1}\api^t+\api^{t} b_{n_1,n_1+1}\api,\]
hence there exists a \(k\leq n_1\) such that \(b_{mn_1,k}\) is a unit.
Multiplying \(B\) from the right with an appropriate unit we may assume \(k=n_1\) and, using column reduction, we may further assume \(b_{n_1,l}=\delta_{n_1,l}\).
Analogous to the previous case we find that \(c_{n_1,l}=a_{n_1,l}\) and using the analogous statement of Lemma \ref{lem:block diagonal} for left divisors we conclude that \(\alpha_k\) is a left divisor of \(C\).
It follows that \(\alpha_k\) is an atom in \(\roi\).

Analogously it follows that \(\alpha'_k\) is an atom in \(\roi\), and
an easy computation shows that \(\alpha_k\alpha'_k=\alpha_1^k\alpha'^k_1\) as desired.
The remaining statements follow directly.
\end{proof}

\begin{corollary}\label{cor:tiledlocalelas}
Let \(\roi\) be a tiled local order.
Then \(\roi\) has finite elasticity if and only if it is hereditary.
\end{corollary}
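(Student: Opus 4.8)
The plan is to reduce directly to Proposition~\ref{prop:standardelas} via the classification of tiled orders up to standard form. First I would invoke Proposition~\ref{lem:standardformiso}, which guarantees that the given tiled local order $\roi$ is isomorphic (as a subring of $M_n(D_\fp)$) to an order $\roi' = \roi(\tilde{n}, M)$ in standard form. Since Proposition~\ref{prop:standardelas} already establishes the equivalence of finite elasticity and heredity for orders in standard form, the corollary will follow once I argue that both properties are invariant under algebra isomorphism.

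For the invariance of elasticity, I would observe that an isomorphism $f \colon \roi \to \roi'$ is in particular a ring isomorphism, hence carries non-zero-divisors to non-zero-divisors and restricts to a monoid isomorphism $f \colon \roi^\bullet \to \roi'^\bullet$. Any monoid isomorphism is trivially a transfer homomorphism in the sense of Definition~\ref{def:transferhom}: it maps units bijectively to units, so the first condition holds, and the lifting condition of the second part is immediate by applying $f^{-1}$. Thus Proposition~\ref{theorem:transferhom} yields $\mathcal{L}(\roi^\bullet) = \mathcal{L}(\roi'^\bullet)$, and in particular $\rho(\roi^\bullet) = \rho(\roi'^\bullet)$. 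For the invariance of heredity, I would note that the defining property in Definition~\ref{def:hermite}, namely that every right ideal is projective, is categorical and hence preserved by $f$.

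Combining these observations, $\roi$ has finite elasticity if and only if $\roi'$ does, which by Proposition~\ref{prop:standardelas} happens if and only if $\roi'$ is hereditary, which in turn holds if and only if $\roi$ is hereditary. This also transparently handles the degenerate case $n = 1$, where $A_\fp = D_\fp$ is a division ring and the only tiled order is the maximal order $\Delta_\fp$: here the standard form has $r = 1$, condition~(2) of Proposition~\ref{prop:standardelas} is vacuously satisfied, and both properties indeed hold. I do not expect any genuine obstacle here, since the entire arithmetic content has been front-loaded into the explicit atom construction of Proposition~\ref{prop:standardelas}; the only remaining point is the routine transfer-homomorphism argument establishing isomorphism-invariance of the two notions.
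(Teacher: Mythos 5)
Your proposal is correct and matches the paper's (implicit) argument: the corollary is stated there as an immediate consequence of Proposition~\ref{lem:standardformiso} (every tiled local order is isomorphic to one in standard form) combined with Proposition~\ref{prop:standardelas}, exactly as you argue. Your explicit verification that elasticity and heredity are isomorphism-invariant, including the degenerate case $n=1$, is routine but sound, so there is no gap.
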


\section{Transfer Results}
In this section we  fix a central simple algebra \(A/K\) of degree \(n\).
Further we let \(\text{TRam}(A)\subset \text{Pl}(K)\) denote the finite set of places of \(K\) where \(A_\mathfrak{p}\) is a division ring.

\begin{definition}
An \(R_S\)-order \(\roi\subset A\) is \textit{tiled away from} \(\text{TRam}(A)\), if \(\roi_\mathfrak{p}\) is a tiled order for all primes \(\mathfrak{p}\notin \text{TRam}(A)\cup S\).
The order \(\roi\) is \textit{locally tiled} if \(\roi_\mathfrak{p}\) is a tiled order for all non-zero primes \(\mathfrak{p}\) of \(R_S\).
\end{definition}

Theorem \ref{prop:transfer} allows us to express the elasticity of \(\roi\) in terms of its local elasticities when \(\roi\) is  quaternion or tiled away from \(\text{TRam}(A)\).
Before proving this we recall some facts about the local elasticities of quaternion orders and orders tiled away from \(\text{TRam}(A)\).

\begin{proposition}\label{prop:elasprop}
Let \(\roi\subset A\) be an \(R_S\)-order.
Assume that \(n=2\) or that \(\roi\) is tiled away from \(\text{TRam}(A)\).
Let \(\mathfrak{M}\) denote the set of non-zero primes of \(R_S\) where \(\roi_\mathfrak{p}\) is not hereditary.
The following statements hold:
\begin{enumerate}
\item \(\rho(\roi_\mathfrak{p}^\bullet)=1\) for all \(\mathfrak{p}\notin \mathfrak{M}\),
\item \(\rho(\roi_\mathfrak{p}^\bullet)< \infty\) for all \(\mathfrak{p}\in \text{TRam}(A)\),
\item \(\rho(\roi_\mathfrak{p}^\bullet)=\infty \) for all \(\mathfrak{p}\in \mathfrak{M}\backslash \text{TRam}(A)\).
\end{enumerate}
\end{proposition}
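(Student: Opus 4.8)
The plan is to treat the three statements separately, since the bulk of the work lies in item~2; items~1 and~3 follow almost immediately from results already in the excerpt. For item~1, a prime $\mathfrak{p}\notin\mathfrak{M}$ is by definition one at which $\roi_\mathfrak{p}$ is hereditary, so Lemma~\ref{lem:hereditarytransfer} supplies a transfer homomorphism $\nrd\colon\roi_\mathfrak{p}^\bullet\to R_\mathfrak{p}^\bullet$ and hence $\rho(\roi_\mathfrak{p}^\bullet)=\rho(R_\mathfrak{p}^\bullet)=1$, the target being the factorial monoid of a complete discrete valuation ring. For item~3 I would split along the two hypotheses. If $\roi$ is tiled away from $\text{TRam}(A)$, then for $\mathfrak{p}\in\mathfrak{M}\setminus\text{TRam}(A)$ we have $\mathfrak{p}\notin\text{TRam}(A)\cup S$ (since $\mathfrak{M}\subseteq\text{Pl}(K)\setminus S$), so $\roi_\mathfrak{p}$ is tiled and non-hereditary, and Corollary~\ref{cor:tiledlocalelas} yields $\rho(\roi_\mathfrak{p}^\bullet)=\infty$. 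If instead $n=2$, then $A_\mathfrak{p}$ not being a division ring forces $A_\mathfrak{p}\cong M_2(K_\mathfrak{p})$, and $\roi_\mathfrak{p}$ is a non-hereditary order in $M_2(K_\mathfrak{p})$; here I would invoke the quaternion result of \cite{baeth2017}, which asserts precisely that such an order has infinite elasticity.

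The substantive part is item~2, where $\mathfrak{p}\in\text{TRam}(A)$, so $A_\mathfrak{p}=D_\mathfrak{p}$ is a division ring and $\roi_\mathfrak{p}\subseteq\Delta_\mathfrak{p}$. The key tool is the reduced-norm valuation $w_\mathfrak{p}=v_\mathfrak{p}\circ\nrd$, which restricts to an additive \emph{length function}: a monoid homomorphism $\roi_\mathfrak{p}^\bullet\to(\Z_{\geq 0},+)$ with $w_\mathfrak{p}^{-1}(0)=\roi_\mathfrak{p}^\times$, so that every non-unit satisfies $w_\mathfrak{p}\geq 1$ (this is the norm criterion for units in Lemma~\ref{lem:nrdatomicity}). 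I would first dispose of the case $\roi_\mathfrak{p}=\Delta_\mathfrak{p}$, which is hereditary and gives $\rho=1$ by item~1. Otherwise $\roi_\mathfrak{p}$ is a proper order, so $\Delta_\mathfrak{p}/\roi_\mathfrak{p}$ has finite length and there is a minimal $c\geq 1$ with $\mathfrak{P}^c=\api^c\Delta_\mathfrak{p}\subseteq\roi_\mathfrak{p}$. The plan is then to bound the $w_\mathfrak{p}$-value of atoms: if $\alpha\in\roi_\mathfrak{p}$ has $w_\mathfrak{p}(\alpha)\geq 2c$, then $\api^c\in\mathfrak{P}^c\subseteq\roi_\mathfrak{p}$, while $\api^{-c}\alpha\in\Delta_\mathfrak{p}$ has $w_\mathfrak{p}(\api^{-c}\alpha)=w_\mathfrak{p}(\alpha)-c\geq c$ and hence lies in $\mathfrak{P}^c\subseteq\roi_\mathfrak{p}$; thus $\alpha=\api^c\cdot(\api^{-c}\alpha)$ is a product of two non-units and $\alpha$ is reducible. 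Consequently every atom $u$ obeys $1\leq w_\mathfrak{p}(u)\leq 2c-1$.

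Finiteness of the elasticity is then a formal consequence of additivity: for any $a\in\roi_\mathfrak{p}^\bullet$ and any factorization $a=\varepsilon u_1\cdots u_k$ into $k$ atoms one has $w_\mathfrak{p}(a)=\sum_{i} w_\mathfrak{p}(u_i)$, whence $w_\mathfrak{p}(a)/(2c-1)\leq k\leq w_\mathfrak{p}(a)$. Taking the extreme admissible values of $k$ gives $\rho(a)=\max L(a)/\min L(a)\leq 2c-1$, so $\rho(\roi_\mathfrak{p}^\bullet)\leq 2c-1<\infty$. Note this argument is uniform in $n$ and in particular reproves the division-ring case of \cite{baeth2017} when $n=2$.

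I expect the main obstacle to be item~2, and specifically the clean verification that $\api^{-c}\alpha$ lands in $\roi_\mathfrak{p}$ and not merely in $\Delta_\mathfrak{p}$: this uses that $\mathfrak{P}^c$ is a two-sided ideal and that $w_\mathfrak{p}$ detects the $\mathfrak{P}$-adic filtration of $\Delta_\mathfrak{p}$ (every $x\in\Delta_\mathfrak{p}$ with $w_\mathfrak{p}(x)\geq c$ lies in $\mathfrak{P}^c$), so that the factorization $\alpha=\api^c\cdot(\api^{-c}\alpha)$ genuinely takes place inside $\roi_\mathfrak{p}$. The remaining bookkeeping—finiteness of the conductor exponent $c$ and additivity of $w_\mathfrak{p}$—is routine.
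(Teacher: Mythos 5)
Your proposal is correct, and for items 1 and 3 it coincides with the paper's proof: hereditary primes are handled by Lemma~\ref{lem:hereditarytransfer}, and the primes in \(\mathfrak{M}\setminus\text{TRam}(A)\) are handled by splitting into the tiled case (Proposition~\ref{prop:standardelas} / Corollary~\ref{cor:tiledlocalelas}) and the \(n=2\) non-tiled case (the quaternion result of \cite{baeth2017}). Where you genuinely diverge is item 2: the paper disposes of the totally ramified primes with a single citation to \cite[Theorem 3.1]{baeth2017}, whereas you give a self-contained conductor argument --- choosing \(c\geq 1\) with \(\mathfrak{P}^c=\api^c\Delta_\fp\subseteq\roi_\fp\), showing every \(\alpha\in\roi_\fp\) with \(w_\fp(\alpha)\geq 2c\) splits as \(\api^c\cdot(\api^{-c}\alpha)\) with both factors non-units of \(\roi_\fp\), so atoms satisfy \(1\leq w_\fp(u)\leq 2c-1\), and then additivity of \(w_\fp\) gives \(\rho(\roi_\fp^\bullet)\leq 2c-1\). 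This argument is sound (the only delicate step, that \(\api^{-c}\alpha\) lies in \(\roi_\fp\) and not merely in \(\Delta_\fp\), is exactly as you say: its valuation is at least \(c\), so it lies in \(\mathfrak{P}^c\subseteq\roi_\fp\)). What your route buys is worth emphasizing: \cite{baeth2017} is a paper about quaternion orders, so the citation literally covers \(n=2\), while Proposition~\ref{prop:elasprop} is also invoked for orders tiled away from \(\text{TRam}(A)\) in algebras of degree \(n>2\), where \(A_\fp\) is a division ring of higher degree; your argument is uniform in \(n\) and so closes this gap (or at least removes any doubt about the scope of the cited theorem), at the modest cost of a paragraph of elementary valuation bookkeeping. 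It also yields an explicit bound \(\rho(\roi_\fp^\bullet)\leq 2c-1\) in terms of the conductor exponent, which the bare citation does not.
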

\begin{proof}
If \(\mathfrak{p}\notin \mathfrak{M}\) then \(\roi_\mathfrak{p}\) is hereditary hence the claim follows from Lemma \ref{lem:hereditarytransfer}.
If \(\mathfrak{p}\in \text{TRam}(A)\) the result follows from \cite[Theorem 3.1]{baeth2017}.
The third statement follows from Proposition \ref{prop:standardelas} and \cite[Theorem 5.8]{baeth2017}.
\end{proof}

\begin{proposition}\label{lem:elasdescentgeneral}
Let \(\roi\subset A\) be a Hermite \(R_S\)-order.
Let \(\mathfrak{M}\) denote the finite set of non-zero primes of \(R_S\) where \(\roi\) is not hereditary. 
Assume there exists a prime \(\mathfrak{p}\in\mathfrak{M}\) together with an integer \(d\in\Z_{\geq 2}\), an infinite set \(I\subset \N\), and a system of atoms \(\{\alpha_{k,1},\alpha_{k,2},...,\alpha_{k,d}\mid k\in I\}\subset \mathcal{A}(\roi_\mathfrak{p}^\bullet)\) such that for all \(k\in I\), there exists a non-unit \(\gamma_k \in \roi_\mathfrak{p}^\bullet\) satisfying
\[\alpha_{k,1}\alpha_{k,2}\cdots\alpha_{k,d}=\beta_{k}\gamma_k^k\delta_k,\]
for some \(\beta_k,\delta_k\in\roi_\mathfrak{p}^\bullet\).
Then \(\rho_{d+1}(\roi^\bullet)=\infty\).
Furthermore, if for all \(k\in I\) we have \(\beta_k\in \roi^\bullet\) or \(\delta_k\in \roi^\bullet\), then \(\rho_d(\roi^\bullet)=\infty\).
\end{proposition}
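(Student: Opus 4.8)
The plan is to transport the statement to the $T$-block monoid $\mathcal{B}:=\mathcal{B}_{\roi_\mathfrak{M}}(\mathcal{C},\iota)$. By Theorem \ref{prop:transfer} the reduced norm induces a transfer homomorphism $\varphi\colon\roi^\bullet\to\mathcal{B}$, and by Proposition \ref{theorem:transferhom} transfer homomorphisms preserve sets of lengths; since the refined elasticities are determined by $\mathcal{L}(\cdot)$, it suffices to construct, for $k$ ranging over the infinite set $I$, elements $b_k\in\mathcal{B}$ with $\min L(b_k)\le d+1$ and $\sup L(b_k)\to\infty$ (and with $\min L(b_k)\le d$ under the extra hypothesis). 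I identify $\roi_\mathfrak{p}^\bullet$ with the factor of $\roi_\mathfrak{M}^\bullet$ at $\mathfrak{p}$ and write $e=\exp(\mathcal{C})$.

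The engine for the long factorizations is a reservoir of atoms of trivial norm class. Since $\iota(\gamma_k^{e})=\iota(\gamma_k)^{e}=1$, the element $(\mathbf 1,\gamma_k^{e})$, with $\mathbf 1$ the empty sequence, lies in $\mathcal{B}$ and is a non-unit. As $\mathcal{B}$ is atomic (being the image, up to units, of the atomic monoid $\roi^\bullet$ under $\varphi$) and any atom dividing an element with empty sequence part again has empty sequence part, $\gamma_k^{e}$ is a product of finitely many, but at least one, atoms of $\roi_\mathfrak{p}^\bullet$ of trivial class. Hence $\gamma_k^{k}=(\gamma_k^{e})^{\lfloor k/e\rfloor}\,\gamma_k^{\,k\bmod e}$ factors into at least $\lfloor k/e\rfloor$ trivial-class atoms together with a remainder of at most $e-1$ factors, and the trivial-class atoms lift to atoms $(\mathbf 1,\nu)$ of $\mathcal{B}$ that consume no sequence letters.

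I would then take $b_k=(S_k,a_k)$ with $a_k=\alpha_{k,1}\cdots\alpha_{k,d}$ and $S_k\in\mathcal{F}(\mathcal{C})$ of length bounded independently of $k$, chosen so that it splits as $S_k=T_1\bigcdot\cdots\bigcdot T_d$ with each $T_i$ zero-sumfree and $\sigma(T_i)=\iota(\alpha_{k,i})^{-1}$; then $\sigma(S_k)=\iota(a_k)^{-1}$, so $b_k\in\mathcal{B}$, and each $(T_i,\alpha_{k,i})$ is an atom (a non-unit factorization would split off a nonempty zero-sum subsequence of the zero-sumfree $T_i$), whence $b_k=\prod_{i=1}^d(T_i,\alpha_{k,i})$ gives $\min L(b_k)\le d$. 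For $\sup L(b_k)$ I use the relation $a_k=\beta_k\gamma_k^{k}\delta_k$: the middle contributes $\ge\lfloor k/e\rfloor$ atoms $(\mathbf 1,\nu)$ of trivial class; the boundary $\beta_k,\delta_k$ (enlarged by the bounded remainder) contributes only boundedly many nontrivial-class atoms—after peeling off its own trivial-class part by a pigeonhole argument on partial-product classes, each such contiguous block refining into trivial-class atoms by the atomicity argument above—and the letters of $S_k$ are redistributed to balance those boundedly many boundary atoms together with boundedly many zero-sum atoms. Thus $\sup L(b_k)\ge\lfloor k/e\rfloor\to\infty$ while $\min L(b_k)$ stays bounded.

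The crux, and the origin of the ``$+1$'', is that the single sequence $S_k$ must serve two different balancings: the short atoms $\alpha_{k,i}$ in the length-$d$ factorization, and the residual boundary atoms in the long factorization. Both require total class $\iota(a_k)^{-1}=\sigma(S_k)$, but the two partitions of $S_k$ generally differ, and reconciling them forces $S_k$ to carry one extra minimal zero-sum block—hence one extra atom in the short factorization and the bound $\min L(b_k)\le d+1$. If instead $\beta_k\in\roi^\bullet$ or $\delta_k\in\roi^\bullet$, that factor is a product of global atoms and so of trivial class by Lemma \ref{lem:nrddiag}; it leaves no residual boundary balancing, the minimal short-balancing sequence already supports the long factorization, no extra block is needed, and one obtains $\min L(b_k)\le d$, i.e.\ $\rho_d(\roi^\bullet)=\infty$. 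I expect the bulk of the work to be precisely this simultaneous-balancing bookkeeping, together with the verification that only boundedly many boundary atoms of nontrivial class survive the peeling.
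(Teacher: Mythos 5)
Your proposal is correct and takes essentially the same route as the paper's proof: transfer to the \(T\)-block monoid, a short factorization into balanced atoms \((T_i,\alpha_{k,i})\) with zero-sumfree (in the paper, single-letter) sequence parts, a long factorization built from the trivial-class non-unit blocks \((\mathbf{1},\gamma_k^{e})\), the extra zero-sum block explaining the \(+1\), and Lemma~\ref{lem:nrddiag} eliminating it when \(\beta_k\) or \(\delta_k\) is global. The paper implements your ``extra minimal zero-sum block'' concretely as the atom \(([\omega]_R\bigcdot[\omega']_R,1)\) with \([\omega]_R=[\nrd\beta_k]_R^{-1}\), the letter \([\omega']_R\) being absorbed into the final (automatically balanced) block, and uses \(C=\#\mathcal{C}\) in place of \(\exp(\mathcal{C})\); these are cosmetic differences.
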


\begin{proof}
Let \(\mathcal{B}\) and \(\mathcal{C}\) denote the \(T\)-block monoid and ray class group associated to \(\roi\) respectively. Write \(C=\#\mathcal{C}\)  and let \(\mathcal{P}=\{\pi_\mathfrak{q}\mid \mathfrak{q}\notin \mathfrak{M}\cup S\}\) be a set of uniformizers for the maximal ideals of \(R_\mathfrak{q}\) for all \(\mathfrak{q}\notin \mathfrak{M}\cup S\).
Let \(m\in \N\).
Let \(k\in I\) be an integer satisfying \(k\geq mC\), and let \(\beta_k,\gamma_k,
\delta_k\) be as given in the proposition.
If \([\nrd \alpha_{k,i}]_R=[1]_R\in \mathcal{C}\), then \((1,\alpha_{k,i})\in \mathcal{B}\) is an atom.
For all \(i\leq d\) such that \([\nrd \alpha_{k,i}]_R\neq [1]_R\), we let \(\pi_i\in \mathcal{P}\) be a uniformizer  such that \(([\pi_i]_R,\alpha_{k,i})\in \mathcal{B}\), and note that \(([\pi_i]_R,\alpha_{k,i})\in \mathcal{A}(\mathcal{B})\) as well.
Without loss of generality we hence impose that \([\nrd \alpha_{k,i}]_R\neq [1]_R\) for all \(i\).
Assume further that \([\nrd\beta_k]_R\neq [1]_R\), and let \(\omega,\omega'\in \mathcal{P}\) be such that \(([\omega]_R,\beta_k),([\omega]_R\bigcdot[\omega']_R,1)\in \mathcal{B}\).
The latter is an atom in \(\mathcal{B}\), and we have a factorization 
\begin{align*}
([\omega]_R\bigcdot[\omega']_R,1)([\pi_1]_R,\alpha_{k,1})\cdots([\pi_d]_R,\alpha'_{k,d})
=([\omega]_R,\beta_k)\left(1,\gamma_k^C\right)^m \left([\omega']_R\bigcdot\sideset{}{^\bullet}\prod_i[\pi_i]_R,\gamma_k^{k-mC}\delta_k\right).
\end{align*}
Hence \(\rho_{d+1}(\roi^\bullet)=\rho_{d+1}(\mathcal{B})\geq m\).
If \([\nrd\beta_k]_R= [1]_R\), then the factor \(([\omega]_R\bigcdot[\omega']_R,1)\) may be omitted.
Hence we find \(\rho_d(\roi^\bullet)\geq m\).
By symmetry the same holds if \([\nrd \delta_k]_R\neq [1]_R\) and the claim follows.
\end{proof}

Note that we may replace the factor \(\gamma_k^k\) in Proposition \ref{lem:elasdescentgeneral} with a product of cancellative non-units \(\prod_{i=1}^{k}\gamma_{k,i}\) such that \([\nrd \gamma_{k,i}]_R=[\nrd \gamma_{k,j}]_R\) for all \(i,j\in\{1,...,k\}\).

\begin{corollary}\label{lem:elasdescent}
Let \(\roi\subset A\) be a Hermite \(R_S\)-order and assume that \(n=2\) or that \(\roi\) is tiled away from \(\text{TRam}(A)\).
If there exists a non-zero prime \(\mathfrak{p}\) of \(R_S\) such that \(\rho(\roi_\mathfrak{p}^\bullet)=\infty\), then \(\rho_2(\roi^\bullet)=\infty\). 
\end{corollary}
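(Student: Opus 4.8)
The plan is to deflect all the real work onto Proposition~\ref{lem:elasdescentgeneral}, which already encapsulates the adelic transfer from a single non-hereditary completion to the global order, and to feed it the correct local atom relation. First I would locate the prime. Since $\rho(\roi_\fp^\bullet)=\infty$, part~(1) of Proposition~\ref{prop:elasprop} excludes $\fp\notin\mathfrak{M}$ (there the local elasticity is $1$) and part~(2) excludes $\fp\in\text{TRam}(A)$ (there it is finite). Hence necessarily $\fp\in\mathfrak{M}\setminus\text{TRam}(A)$: the completion $\roi_\fp$ is non-hereditary and $A_\fp$ is not a division ring. This is exactly the regime in which an explicit family of atoms realizing unbounded local elasticity is available.

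Next I would produce that family. If $\roi$ is tiled away from $\text{TRam}(A)$, then since $\fp\notin\text{TRam}(A)$ the completion $\roi_\fp$ is a tiled order which, being in $\mathfrak{M}$, is non-hereditary; so Proposition~\ref{prop:standardelas} supplies atoms $\{\alpha_k,\alpha'_k\mid k\in\Z_{\geq 1}\}$ of $\roi_\fp^\bullet$ satisfying $\alpha_k\alpha'_k=\alpha_1^k(\alpha'_1)^k$ for all $k$. In the remaining case $n=2$, the analogous family for a non-hereditary quaternion order at a non-totally-ramified place is furnished by \cite[Theorem 5.8]{baeth2017}. Either way I obtain, for infinitely many $k$, two atoms $\alpha_{k,1}=\alpha_k$ and $\alpha_{k,2}=\alpha'_k$ together with a relation in which a single fixed non-unit appears raised to the $k$-th power.

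The key step is then to invoke Proposition~\ref{lem:elasdescentgeneral} with $d=2$. I would set $\gamma_k=\alpha_1$ (a non-unit atom, chosen independent of $k$), $\beta_k=1$, and $\delta_k=(\alpha'_1)^k$, so that the relation $\alpha_k\alpha'_k=\alpha_1^k(\alpha'_1)^k$ becomes exactly $\alpha_{k,1}\alpha_{k,2}=\beta_k\gamma_k^k\delta_k$ with $\gamma_k$ a non-unit and $\beta_k,\delta_k\in\roi_\fp^\bullet$. Because $\beta_k=1\in\roi^\bullet$ for every $k$, the extra hypothesis of the final (``furthermore'') assertion of Proposition~\ref{lem:elasdescentgeneral} is satisfied, and its conclusion is the sharp statement $\rho_2(\roi^\bullet)=\infty$ rather than merely $\rho_3(\roi^\bullet)=\infty$.

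I expect the only delicate point to be securing this sharper exponent: obtaining $\rho_2$ and not just $\rho_{d+1}=\rho_3$ hinges on being able to place the entire $k$-th power on one side of the relation while leaving a \emph{global} boundary factor, which is precisely what the choice $\beta_k=1$ (whose reduced norm lies in the trivial class by Lemma~\ref{lem:nrddiag}) accomplishes. A secondary point of care is matching the atom relation from \cite{baeth2017} in the quaternion case to the shape $\beta_k\gamma_k^k\delta_k$ demanded by Proposition~\ref{lem:elasdescentgeneral}; once the fixed non-unit $\gamma_k$ is identified there, the argument proceeds verbatim as in the tiled case.
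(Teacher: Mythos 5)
Your proposal is correct and follows essentially the same route as the paper: locate \(\fp\in\mathfrak{M}\setminus\text{TRam}(A)\) via Proposition~\ref{prop:elasprop}, extract the atom family from Proposition~\ref{prop:standardelas} (tiled case) or \cite[Theorem 5.8]{baeth2017} (quaternion case), and feed it into Proposition~\ref{lem:elasdescentgeneral} with \(d=2\); your explicit assignment \(\beta_k=1\), \(\gamma_k=\alpha_1\), \(\delta_k=(\alpha'_1)^k\), which triggers the ``furthermore'' clause and yields \(\rho_2\) rather than \(\rho_3\), is exactly what the paper leaves implicit. The only refinement worth making is that the case split should be on whether the completion \(\roi_\fp\) itself is tiled (as the paper does) rather than on which global hypothesis holds: when \(n=2\) a non-hereditary Eichler completion is tiled and is covered by Proposition~\ref{prop:standardelas}, whereas \cite[Theorem 5.8]{baeth2017} is invoked in the paper only for non-Eichler orders in \(M_2(K_\fp)\), where it gives the relation \(\alpha_k\alpha'_k=\mu_k\pi^{2k}\).
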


\begin{proof}
By Proposition \ref{prop:elasprop} we must have \(\mathfrak{p}\notin \text{TRam}(A)\).
If \(\roi_\mathfrak{p}\) is tiled we may assume it to be in standard form.
In this case, the set described in Proposition \ref{prop:standardelas} satisfies the conditions of Proposition \ref{lem:elasdescentgeneral}.
Otherwise \(\roi_\mathfrak{p}\) must be a non-Eichler order inside \(M_2(K_\mathfrak{p})\).
By \cite[Theorem 5.8]{baeth2017} there exists an integer \(N\) such that for every \(k\geq N\) there exist atoms \(\alpha_k,\alpha'_k\) satisfying \(\alpha_k\alpha'_k=\mu_k\pi^{2k}\), for some unit \(\mu_k\in R_\mathfrak{p}^\times\).
Hence the claim follows in this case as well.
\end{proof}

Let \(\roi\subset A\) be an \(R_S\)-order and let \(\mathfrak{M}\) denote the finite set of primes where \(\roi_\mathfrak{p}\) is not hereditary.
The local valuations \(v_\mathfrak{p}:R_\mathfrak{p}^\bullet\to \Z_{\geq 0}\) for \(\mathfrak{p}\notin S\) allows us to define  monoid homomorphisms \(\widetilde{v},v_\mathfrak{M}:R_S^\bullet \to \Z_{\geq 0}\) by setting
\begin{align}
\widetilde{v}(\alpha)=\sum_{\mathfrak{p}\notin  \mathfrak{M}\cup S}v_\mathfrak{p}(\alpha), && v_\mathfrak{M}(\alpha)=\sum_{\mathfrak{p}\in \mathfrak{M}}v_\mathfrak{p}(\alpha),\label{eq:totvaluation}
\end{align}
for \(\alpha\in R_S^\bullet\).
If \(\mathfrak{M}=\{\mathfrak{p}\}\) is the collection of a single prime then \(v_\mathfrak{M}=v_\mathfrak{p}\).
We have \(\alpha\in R_S^\times\) if and only if \(\widetilde{v}(\alpha)+v_\mathfrak{M}(\alpha)=0\), and we have \(\alpha\in \mathcal{A}(R_S^\bullet)\) if \(\widetilde{v}(\alpha)+v_\mathfrak{M}(\alpha)=1\).
Furthermore, since the reduced norm \(\roi^\bullet\to R_S^\bullet\) is a monoid homomorphism we find that 
\(\alpha\in \roi^\times\) if and only if \(\widetilde{v}(\nrd \alpha)+v_\mathfrak{M}(\nrd\alpha)=0\), and that \(\alpha\in \mathcal{A}(\roi^\bullet)\) if \(\widetilde{v}(\nrd \alpha)+v_\mathfrak{M}(\nrd\alpha)=1\).

\begin{theorem}\label{thm:elasequivalence}
Let \(\roi\subset A\) be a Hermite \(R_S\)-order and assume that \(n=2\) or that \(\roi\) is tiled away from \(\text{TRam}(A)\).
The following are equivalent:
\begin{enumerate}
\item \(\rho(\roi^\bullet)<\infty\),
\item \(\rho_k(\roi^\bullet)<\infty\) for all \(k\geq 2\),
\item \(\rho_2(\roi^\bullet)<\infty\),
\item \(\rho(\roi^\bullet_\mathfrak{p})<\infty\) for all \(\mathfrak{p}\notin S\),
\item \(\roi_\mathfrak{p}\) is hereditary for all \(\mathfrak{p}\notin \text{TRam}(A)\).
\end{enumerate}
Furthermore, let \(\roi'\subset A\) be a hereditary order containing \(\roi\).
Let \(\mathfrak{M}\subset \text{TRam}(A)\) denote the set of primes of \(R_S\) where \(\roi_\mathfrak{p}\) is not hereditary, and let \(\mathcal{C}\) be the ray class group of \(K\) associated to \(\roi\).
If any of the above statements hold, we have
\[\rho(\roi^\bullet)\leq \begin{cases}
\max\{1,\frac{D(\mathcal{C})}{2}\},& \text{if } \roi \text{ is hereditary,}\\
2n\cdot\#\mathfrak{M}\cdot\left(v_\mathfrak{M}([\roi':\roi])+D(\mathcal{C})\right),& \text{if } \roi \text{ is not hereditary}.
\end{cases}\]
Here \(D(\mathcal{C})\) is the Davenport constant of \(\mathcal{C}\).
\end{theorem}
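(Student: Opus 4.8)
The plan is to prove the five statements equivalent in a cycle and then establish the explicit bound, the latter simultaneously supplying the only nontrivial implication $(5)\Rightarrow(1)$. For the equivalences, $(1)\Rightarrow(2)$ is immediate from Lemma~\ref{lem:elaslimit}: if $\rho(\roi^\bullet)<\infty$, then no $\rho_k(\roi^\bullet)$ can be infinite. The implication $(2)\Rightarrow(3)$ is trivial. For $(3)\Rightarrow(5)$ I argue contrapositively: if some $\roi_\fp$ with $\fp\notin\text{TRam}(A)$ fails to be hereditary, then $\fp\in\mathfrak{M}\setminus\text{TRam}(A)$, so $\rho(\roi_\fp^\bullet)=\infty$ by Proposition~\ref{prop:elasprop}(3), and hence $\rho_2(\roi^\bullet)=\infty$ by Corollary~\ref{lem:elasdescent}. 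Finally $(4)\Leftrightarrow(5)$ is read off from Proposition~\ref{prop:elasprop}: under $(5)$ every $\roi_\fp$ is either hereditary or totally ramified and so has finite local elasticity, while a non-hereditary $\roi_\fp$ off $\text{TRam}(A)$ would have infinite local elasticity. Thus the whole cycle closes once I prove $(5)\Rightarrow(1)$ with the stated bound.

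For the bound, Theorem~\ref{prop:transfer} and Proposition~\ref{theorem:transferhom} give $\rho(\roi^\bullet)=\rho(\mathcal{B})$ with $\mathcal{B}=\mathcal{B}_{\roi_\mathfrak{M}}(\mathcal{C},\iota)$; under $(5)$ one has $\mathfrak{M}\subseteq\text{TRam}(A)$, so each $\roi_\fp$ ($\fp\in\mathfrak{M}$) is an order in the division ring $D_\fp$ whose unique maximal (hence hereditary) overorder is $\roi'_\fp=\Delta_\fp$. In the hereditary case $\mathfrak{M}=\varnothing$, so $\mathcal{B}=\mathcal{B}(\mathcal{C})$ and I simply invoke the classical identity $\rho(\mathcal{B}(\mathcal{C}))=\max\{1,D(\mathcal{C})/2\}$. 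In the non-hereditary case I introduce the two length homomorphisms $\ell_1,\ell_2\colon\mathcal{B}\to\Z_{\geq 0}$, $\ell_1(S,\beta)=|S|$ and $\ell_2(S,\beta)=v_\mathfrak{M}(\nrd\beta)$. Every atom is a non-unit, so $\ell_1+\ell_2\geq 1$ on $\mathcal{A}(\mathcal{B})$; since $\ell_1+\ell_2$ is additive, any element $x$ satisfies $\min L(x)\geq(\ell_1+\ell_2)(x)/M$ and $\max L(x)\leq(\ell_1+\ell_2)(x)$, where $M=\sup\{(\ell_1+\ell_2)(a):a\in\mathcal{A}(\mathcal{B})\}$, whence $\rho(\mathcal{B})\leq M$. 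It therefore remains to bound $\ell_1$ and $\ell_2$ on a single atom $(S,\beta)$.

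The bound on $\ell_1$ is combinatorial: if $|S|>D(\mathcal{C})$ then $S$ has a nonempty zero-sum subsequence, and splitting it off (as a factor $(S',1)\in\mathcal{B}$) contradicts atomicity unless $S$ itself is an atom of $\mathcal{B}(\mathcal{C})$, so $|S|\leq D(\mathcal{C})$. The bound on $\ell_2$ is local and is where the noncommutativity must be handled. Fix $\fp\in\mathfrak{M}$ with conductor $\mathfrak{f}=\api^{c}\Delta_\fp\subseteq\roi_\fp$, where $c=c_\fp\geq 1$, and set $t=\mathrm{ord}_\mathcal{C}(\iota(\api^{c}))\leq\exp(\mathcal{C})\leq D(\mathcal{C})$. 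If $w_\fp(\beta_\fp)\geq(t+1)c$, then $\api^{ct}=(\api^{c})^{t}\in\mathfrak{f}\subseteq\roi_\fp$ is a right divisor of $\beta_\fp$ with non-unit cofactor in $\roi_\fp$ and with $\iota(\api^{ct})=\iota(\api^{c})^{t}=1$; hence $(1,\api^{ct})$ is a non-unit of $\mathcal{B}$ right-dividing $(S,\beta)$, contradicting atomicity. Thus $v_\fp(\nrd\beta_\fp)=w_\fp(\beta_\fp)<(t+1)c_\fp$, and summing over $\mathfrak{M}$, combined with the estimate of the conductor exponent $c_\fp$ against the module index $v_\fp([\roi':\roi])$ (the factor $2n$ absorbing the residue degree of $\Delta_\fp/\mathfrak{P}$ and the passage between $w_\fp$ and $v_\fp$), yields $\ell_2\leq 2n\cdot\#\mathfrak{M}\cdot v_\mathfrak{M}([\roi':\roi])\cdot(\text{a factor }\leq D(\mathcal{C}))$, and together with $\ell_1\leq D(\mathcal{C})$ the claimed bound follows.

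\emph{The main obstacle is precisely the bound on $\ell_2$.} Because $\roi_\fp$ is noncommutative, a trivial-class \emph{middle} factor of $\beta_\fp$ cannot be extracted, so the naive pigeonhole over partial products of a local atom factorisation does not split $(S,\beta)$; the decisive trick is to peel a \emph{one-sided} divisor that is a power of the two-sided conductor generator $\api^{c}$, whose class in $\mathcal{C}$ has order at most $\exp(\mathcal{C})$, guaranteeing a trivial-class right divisor of controlled valuation as soon as $w_\fp(\beta_\fp)$ is large. The remaining delicate point is the conversion of the conductor exponent $c_\fp$ into the module index $v_\fp([\roi':\roi])$ with the correct constant $2n$.
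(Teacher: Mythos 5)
Your treatment of the equivalences (1)--(5) is sound and uses exactly the paper's ingredients (Lemma~\ref{lem:elaslimit}, Proposition~\ref{prop:elasprop}, Corollary~\ref{lem:elasdescent}), and your weight-function framework --- bounding $\rho(\mathcal{B})$ by $M=\sup\{(\ell_1+\ell_2)(a):a\in\mathcal{A}(\mathcal{B})\}$ via $\max L(x)\leq(\ell_1+\ell_2)(x)$ and $\min L(x)\geq(\ell_1+\ell_2)(x)/M$ --- is a clean and correct reduction to a per-atom bound; it is arguably tidier than the paper's own argument, which instead assumes infinite elasticity, compares a maximal with a minimal factorization of one element, and pigeonholes over $\mathfrak{M}$. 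The bound $\ell_1\leq D(\mathcal{C})$ on atoms is also correct.

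The genuine gap is in your bound on $\ell_2$, and it cannot be fixed by letting a constant like $2n$ ``absorb'' anything. Peeling off $\api^{c t}$, a power of the conductor generator, costs valuation $c_\fp t$, where $t=\mathrm{ord}_{\mathcal{C}}(\iota(\api^{c_\fp}))$ can be as large as $D(\mathcal{C})$ and $c_\fp$ can be as large as $n\kappa_\fp$ (with $\kappa_\fp=v_\fp([\roi':\roi])$, since $\pi_\fp^{\kappa_\fp}\Delta_\fp=\mathfrak{P}^{n\kappa_\fp}$). So your per-prime atom bound has the form $(t+1)c_\fp\approx n\,\kappa_\fp\,D(\mathcal{C})$: the conductor cost and the class-group cost \emph{multiply}, because the element you raise to the $t$-th power already has valuation $\sim n\kappa_\fp$. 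The theorem's bound $2n\cdot\#\mathfrak{M}\cdot(v_\mathfrak{M}([\roi':\roi])+D(\mathcal{C}))$ has these costs \emph{added}, and a product cannot be dominated by a sum: for $n=2$, $\#\mathfrak{M}=1$, $\kappa_\fp=D(\mathcal{C})=10$ your estimate only gives roughly $230$ while the claimed bound is $80$. Hence your argument proves finiteness (so the equivalences stand), but not the stated inequality, which is part of the theorem. The repair is the paper's choice of peeled element, which decouples the two costs: take an atom $\gamma\in\mathcal{A}(\roi_\fp^\bullet)$ with $w_\fp(\gamma)\leq n$ (factor $\pi_\fp\in\roi_\fp^\bullet$ into atoms and take one), set $C=\mathrm{ord}_{\mathcal{C}}([\nrd\gamma]_R)\leq D(\mathcal{C})$, and peel $\gamma^{C}$, which has trivial class and valuation at most $nD(\mathcal{C})$; the cofactor $\gamma^{-C}\beta_\fp$ lies in $\roi_\fp$ as soon as its valuation is at least $n\kappa_\fp$, because $\mathfrak{P}^{n\kappa_\fp}=\pi_\fp^{\kappa_\fp}\Delta_\fp\subseteq\roi_\fp$, i.e.\ the conductor is used only once, not raised to a power. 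This shows every atom satisfies $v_\fp(\nrd\beta_\fp)<n(D(\mathcal{C})+\kappa_\fp)$ --- a sum, not a product --- and feeding this into your $\ell_1+\ell_2$ framework recovers the claimed bound.
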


\begin{proof}
The implication \(1\Rightarrow 2\) follows directly from Lemma \ref{lem:elaslimit}.
The implication \(2\Rightarrow 3\) is immediate, and the implication \(3\Rightarrow 4\) is proven in Corollary \ref{lem:elasdescent}.
Furthermore the implications \(4\Leftrightarrow 5\) follow from Proposition \ref{prop:elasprop}.
Thus it remains to show that \(5 \Rightarrow 1\).

Let \(\mathcal{B}\) be the \(T\)-block monoid associated to \(\roi\), and let \(\varphi:\roi^\bullet\to \mathcal{B}\) be the transfer homomorphism given in Theorem \ref{prop:transfer}.
Let us first assume that \(\roi\) is hereditary.
Then \(\varphi\) is a transfer homomorphism, hence \(\rho(\roi^\bullet)=\rho(\mathcal{B})=\max\{1,\frac{\mathcal{D}(\mathcal{C})}{2}\}\) by \cite[Theorem 2.3.1]{Geroldinger09}.
Hence assume that \(\roi\) is not hereditary.
For all primes \(\mathfrak{p}\in\mathfrak{M}\), let \(\pi_\mathfrak{p}\) be a uniformizer for \(R_\mathfrak{p}\), and write \(\Delta_\mathfrak{p}\) for the unique maximal order in the division ring \(A_\mathfrak{p}\).
Then there exists a unique minimal \(\kappa_\mathfrak{p}\in \Z_{\geq 1}\) such that \(\pi_\mathfrak{p}^{\kappa_\mathfrak{p}}\Delta_\mathfrak{p}\subset \roi_\mathfrak{p}\).
Since \(\roi'\) is hereditary we have \(\roi'_\mathfrak{p}=\Delta_\mathfrak{p}\) by \cite[Lemma 21.2.7]{Voightquatalg}.
Hence we have \(\kappa_\mathfrak{p}=v_\mathfrak{p}([\roi':\roi])\), and we write \(\kappa=v_\mathfrak{M}([\roi':\roi])=\sum_{\mathfrak{p}\in\mathfrak{M}}\kappa_\mathfrak{p}\).

Identify \(\widehat{\roi}=\widetilde{\roi}\times \roi_\mathfrak{M}\), and write \(M=\#\mathfrak{M}\).
Let \(\mathfrak{p}\notin \mathfrak{M}\) be a non-zero prime of \(R_S\).
Since \(\roi_\mathfrak{p}\) is hereditary, every element of \(\roi_\mathfrak{p}^\bullet\) has unique length, and every atom has the some norm up to multiplication by units.
In particular we have \(L(\alpha)=\{v_\mathfrak{p}(\nrd \alpha)\}\).
Fence for every \(\tilde{\alpha}\in \widetilde{\roi}^\bullet\) we have \(L(\tilde{\alpha})=\{\widetilde{v}(\nrd \tilde{\alpha})\}\).
Let \(a\) be any atom of \(\roi^\bullet\).
Write \(\varphi(a)=(T,a_\mathfrak{M})\) and note that
\(T\) is a sequence of length \(\widetilde{v}(\nrd a)\).
Since \(a\) is an atom we must have \(\widetilde{v}(\nrd a)\leq D(\mathcal{C})\), else \(T\) contains a zero-sum subsequence.
Assume that \(\roi^\bullet\) has infinite elasticity.
Let \(m\in \N\) and assume that \(m\geq 2D(\mathcal{C})\).
By assumption there exists an \(a_m\in \roi^\bullet\) such that 
\(m\leq \rho(a_m)=\frac{N}{k}\), where \(N\) and \(k\) are the maximal and minimal lengths respectively.
We let \(a_m=\beta_1\cdots\beta_N\) be a factorization of maximal length and \(a_m=\alpha_1\cdots \alpha_k\) be a factorization of minimal length.
We claim that there are at least \(\frac{N}{2}\) atoms \(\beta_i\) for which \(\widetilde{v}(\nrd \beta_i)=0\).
This is trivially true if \(\widetilde{v}(\nrd a_m)=0\), hence assume \(\widetilde{v}(\nrd a_m)\neq 0\).
By the above we have \(\widetilde{v}(\nrd a_m) \leq kD(\mathcal{C})\), hence
\[2D(\mathcal{C}) \leq m\leq \frac{N}{k}\leq \frac{ND(\mathcal{C})}{\widetilde{v}(\nrd a_m)},\]
which shows that \(\widetilde{v}(\nrd a_m)\leq \frac{N}{2}\).
If \(l\leq N\) denotes the number of atoms \(\beta_i\) for which \(\widetilde{v}(\nrd \beta_i)\neq 0\), then we have \(l\leq \widetilde{v}(\nrd a_m)\leq \frac{N}{2}\) proving the claim.
Equivalently we find that there are at least \(\frac{N}{2}\) atoms \(\beta_i\) for which \(v_\mathfrak{M}(\nrd \beta_i)\neq 0\).
It follows that there exists a prime \(\mathfrak{p}\in \mathfrak{M}\) such that \(v_\mathfrak{p}(\nrd \beta_i)\neq 0\) for at least \(\frac{N}{2M}\) atoms.
Since \(N\geq km\) we conclude that \(v_\mathfrak{p}(\nrd a_m)\geq \frac{km}{2M}\).
In particular there exists an atom \(\alpha_i\) such that \(v_\mathfrak{p}(\nrd \alpha_i)\geq \frac{m}{2M}\).

Now assume that \(m\geq 2nM(\kappa+D(\mathcal{C}))\).
Let \(a,b\in\roi_\mathfrak{p}^\bullet\) be two elements such that \(v_\mathfrak{p}(\nrd b) +n\kappa_\mathfrak{p} \leq v_\mathfrak{p}(\nrd a)\).
Since \(\Delta_\mathfrak{p}\) is the valuation ring of \(A_\mathfrak{p}\), and since \(v_\mathfrak{p}(\nrd b^{-1}a)\geq n\kappa_\mathfrak{p}\) we conclude that \(b^{-1}a\in \pi_\mathfrak{p}^{\kappa_\mathfrak{p}}\Delta_\mathfrak{p}\subset \roi_\mathfrak{p}\), and thus \(a=bc\) for some \(c\in \roi_\mathfrak{p}^\bullet\). 
Since \(\pi_\mathfrak{p}\in \roi_\mathfrak{p}^\bullet\), there exists an atom \(\gamma\in \mathcal{A}(\roi_\mathfrak{p}^\bullet)\) such that \(v_\mathfrak{p}(\nrd \gamma)=w_\mathfrak{p}(\gamma)\in \{1,...,n\}\).
write \(C=\text{ord}_\mathcal{C}([\nrd \gamma]_R)\) for the order of \([\nrd \gamma]_R\) in the class group \(\mathcal{C}\).
Then \(([\nrd \gamma]_R)^C\) is a zero-sum sequence in \(\mathcal{B}(\mathcal{C})\) hence \(C\leq D(\mathcal{C})\).
By assumption we have \(v_\mathfrak{p}( \nrd \alpha_i)\geq \frac{m}{2M}\geq n\kappa+nD(\mathcal{C})\geq n\kappa_\mathfrak{p}+nC\).
Thus by the above argument, \(\gamma^C\) is a left divisor of \(\alpha_{i}\) in \(\roi_\mathfrak{p}\).
Consequently \((1,\gamma^C)\) is a left divisor of \(\varphi(\alpha_i)\) in \(\mathcal{B}\), contradicting the atomicity of \(\alpha_i\).
It follows that \(\roi^\bullet\) has finite elasticity.
\end{proof}

\begin{corollary}
Let \(\roi\subset A\) be a Hermite \(R_S\)-order, and assume that \(\roi\) is locally tiled. 
The following are equivalent:
\begin{enumerate}
\item \(\rho(\roi^\bullet)<\infty\),
\item \(\roi\) is hereditary.
\end{enumerate}
\end{corollary}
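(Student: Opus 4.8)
The plan is to deduce this corollary directly from Theorem \ref{thm:elasequivalence}, exploiting that the hypothesis ``locally tiled'' is strictly stronger than ``tiled away from \(\text{TRam}(A)\)'' and that it additionally pins down the completions at the totally ramified places. First I would note that if \(\roi\) is locally tiled, then in particular \(\roi_\fp\) is tiled for every prime \(\fp\notin\text{TRam}(A)\cup S\), so \(\roi\) is tiled away from \(\text{TRam}(A)\) and the standing hypotheses of Theorem \ref{thm:elasequivalence} are satisfied.

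Next I would analyse the primes \(\fp\in\text{TRam}(A)\). At such a place \(A_\fp\) is a division ring, so in the local decomposition \(A_\fp\cong M_1(D_\fp)\) the matrix size equals \(1\), and the tiled condition requires \(\roi_\fp\) to contain a conjugate of \(\text{Diag}(\Delta_\fp)=\Delta_\fp\). Since \(\Delta_\fp\) is the unique maximal order of \(D_\fp\) it is invariant under conjugation, forcing \(\roi_\fp\supseteq\Delta_\fp\) and hence \(\roi_\fp=\Delta_\fp\). Thus \(\roi_\fp\) is maximal, in particular hereditary, at every totally ramified place; this is exactly the \(n=1\) case recorded in the discussion following Definition \ref{def:tiledorder}.

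Finally I would invoke the equivalence \(1\Leftrightarrow 5\) of Theorem \ref{thm:elasequivalence}, which gives that \(\rho(\roi^\bullet)<\infty\) if and only if \(\roi_\fp\) is hereditary for all \(\fp\notin\text{TRam}(A)\). Combining this with the preceding paragraph, where heredity at the totally ramified places is automatic under local tiling, shows that \(\rho(\roi^\bullet)<\infty\) is equivalent to \(\roi_\fp\) being hereditary at \emph{every} non-zero prime \(\fp\) of \(R_S\). Because heredity is a local property, this last condition is precisely the heredity of \(\roi\), yielding the equivalence of \(1\) and \(2\).

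I expect no serious obstacle here: the only point requiring care is the bookkeeping at the totally ramified places, namely verifying that the tiled condition genuinely collapses to maximality when \(A_\fp\) is a division ring. Once that reduction is in place, the corollary is a clean specialisation of the main theorem.
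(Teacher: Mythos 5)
Your proposal is correct and follows essentially the same route as the paper: reduce to Theorem \ref{thm:elasequivalence} via the observation that locally tiled implies tiled away from \(\text{TRam}(A)\), note that at totally ramified places the tiled condition forces \(\roi_\fp\) to be the unique maximal order \(\Delta_\fp\) (hence hereditary), and conclude by locality of heredity. Your extra remark on conjugation-invariance of \(\Delta_\fp\) just makes explicit what the paper cites from the discussion following Definition \ref{def:tiledorder}.
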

\begin{proof}
The implication \(2\Rightarrow 1\) is immediate.
Hence assume that \(\rho(\roi^\bullet)<\infty\).
As \(\roi\) is locally tiled, it is tiled away from \(\text{TRam}(A)\).
Hence \(\roi_\mathfrak{p}\) is hereditary for all \(\mathfrak{p}\notin \text{TRam}(A)\) by Theorem \ref{thm:elasequivalence}.
Furthermore  \(\roi_\mathfrak{p}\) is the unique maximal order in \(A_\mathfrak{p}\) for all \(\mathfrak{p}\) in \(\text{TRam}(A)\).
It follows that \(\roi_\mathfrak{p}\) is hereditary for these primes as well.
As \(\roi_\mathfrak{p}\) is hereditary for all primes \(\mathfrak{p}\in \text{Pl}(K)\backslash S\) we conclude that \(\roi\) is hereditary.
\end{proof}

\printbibliography

\end{document}